\newcommand{\pbaddress}{biran@math.ethz.ch}
\newcommand{\mkaddress}{michael.khanevsky@math.ethz.ch}
\theoremstyle{plain}
\newtheorem{thm}{Theorem}[subsection]
\newtheorem{cor}[thm]{Corollary}
\newtheorem{lem}[thm]{Lemma}
\newtheorem{prop}[thm]{Proposition}
\theoremstyle{definition}
\theoremstyle{remark}
\newtheorem{rem}[thm]{Remark}
\theoremstyle{plain}
\newcommand{\Qed}{\hfill \qedsymbol \medskip}
\newcommand{\Id}{{{\mathchoice {\rm 1\mskip-4mu l} {\rm 1\mskip-4mu l}
      {\rm 1\mskip-4.5mu l} {\rm 1\mskip-5mu l}}}}
\newcommand{\hooklongrightarrow}{\lhook\joinrel\longrightarrow}
\newcommand{\R}{\mathbb{R}}
\newcommand{\Z}{\mathbb{Z}}
\newcommand{\C}{\mathbb{C}}
\newcommand{\Crit}{{\rm{Crit\/ }}}
\newcommand{\data}{\mathscr{D}}
\newcommand{\cplx}{\mathcal{C}}
\newcommand{\pearlspace}{\mathcal{P}}
\newcommand{\diff}{d}
\newcommand{\disk}{D}
\begin{document}

\title{A Floer-Gysin exact sequence for Lagrangian submanifolds}
\date{\today} \thanks{Both authors were partially supported by the
  ISRAEL SCIENCE FOUNDATION (grant No. 1227/06 *)}

\author{Paul Biran and Michael Khanevsky} 
\address{Paul Biran, Department of Mathematics, ETH-Z\"{u}rich,
  R\"{a}mistrasse 101, 8092 Z\"{u}rich, Switzerland}
\email{\pbaddress} \address{Michael Khanevsky, School of Mathematical
  Sciences, Tel-Aviv University, Ramat-Aviv, Tel-Aviv 69978, Israel}
\email{\mkaddress}

\bibliographystyle{alphanum}

%
%

\maketitle

%
%


\section{Introduction and main results} \label{S:intro} This paper is
concerned with a Floer-theoretic analogue of the well known
Gysin-sequence from algebraic topology. In this paper we focus on the
case of circle bundles only. Recall that given a circle bundle $\pi:
\Gamma^{n+1} \longrightarrow L^n$ over a closed manifold $L$ there is
a long exact sequence in cohomology:
\begin{equation*}
   \begin{CD}
      \cdots @>>> H^{k}(L) @>{\cup\, e}>> H^{k+2}(L) @>{\pi^*}>>
      H^{k+2}(\Gamma) @>{\pi_*}>> H^{k+1}(L) @>>> \cdots
   \end{CD}
\end{equation*}
where $e \in H^2(L)$ is the Euler class of the circle bundle and
$\pi_*:H^{*+1}(\Gamma) \longrightarrow H^*(L)$ is the map that can be
identified under Poincar\'{e} duality with the map induced by the
projection $H_{n-*}(\Gamma) \longrightarrow H_{n-*}(L)$ (sometimes the
map $\pi_*$ is also called ``integration along the fibres'').

In this paper we will develop a Floer analogue of this sequence
associated to a Lagrangian submanifold $L$ and certain circle bundle
over it that appears naturally in certain geometric circumstances.

Let $(\Sigma, \omega_{\Sigma})$ be a closed symplectic manifold with
an integral symplectic structure, i.e. $[\omega_{\Sigma}] \in
H^2(\Sigma; \mathbb{R})$ admits a lift to $H^2(\Sigma; \mathbb{Z})$.
Let $L \subset \Sigma$ be a Lagrangian submanifold. One of the motives
of this paper is to study the Floer cohomology of $L$ and derive from
it possible applications, e.g. to questions concerning the topology of
$L$.

Our starting point is that one can associate to $L$ in a natural way a
flat circle bundle $\Gamma_L \to L$ whose total space $\Gamma_L$ can
be realized as a Lagrangian submanifold in a new symplectic manifold
which is a bundle over $\Sigma$. The construction is simple. Fix a
lift $a \in H^2(\Sigma;\mathbb{Z})$ of $[\omega_{\Sigma}]$ and let
$\mathcal{N} \to \Sigma$ be the complex line bundle with
$c_1^{\mathcal{N}} = a$. One can endow $\mathcal{N}$ with a hermitian
metric and a connection so that the curvature of $\mathcal{N}$ is
$\tfrac{i}{2 \pi} \omega_{\Sigma}$. The total space of $\mathcal{N}$
can be endowed with a canonical symplectic structure
$\omega_{\textnormal{can}}$ which restricts to $\omega_{\Sigma}$ on
$\Sigma$. Fix $r_0>0$ and let $P_{r_0} \subset \mathcal{N}$ be the
circle bundle of radius $r_0$ and denote by $\pi: P_{r_0} \to \Sigma$
the projection. Then $\Gamma_L = \pi^{-1}(L)$ becomes a Lagrangian
submanifold of $(\mathcal{N}, \omega_{\textnormal{can}})$. Note that
$\Gamma_L$ in fact lies in $\mathcal{N} \setminus \Sigma$.

Ideally one would like to relate the symplectic topology of $L \subset
\Sigma$ to that of $\Gamma_L \subset \mathcal{N}$ or that of $\Gamma_L
\subset \mathcal{N} \setminus \Sigma$, hoping that the latter would
shed some new light on $L$. The problem is that both $\mathcal{N}$ and
$\mathcal{N} \setminus \Sigma$ have a symplectically concave end (at
infinity) which apriori makes them inaccessible to the current
techniques of symplectic topology, in particular Floer theory.
Nevertheless, we will see that one can still define a version of Floer
cohomology for $\Gamma_L \subset \mathcal{N} \setminus \Sigma$.
Moreover, we will see that the Floer cohomology of $L$ and that of
$\Gamma_L$ are related by a long exact sequence which is analogous to
the Gysin sequence relating the singular cohomologies of $L$ and
$\Gamma_L$.

Although we can define the Floer cohomology for $\Gamma_L \subset
\mathcal{N} \setminus \Sigma$ this notion is apriori not very useful
unless we can establish some geometric properties of this cohomology,
such as invariance under Hamiltonian isotopies, a vanishing criterion
when $\Gamma_L$ is displaceable etc. This is not so clear in general
since the manifold $\mathcal{N} \setminus \Sigma$ has a concave end.
However, there is one situation in which one can go through these
difficulties: when the contact manifold $P_{r_0}$ is Weinstein
fillable. This means that $\mathcal{N} \setminus \Sigma$ (which is
just the negative symplectization of $P_{r_0}$) can be compactified at
the negative (or concave) end into a Weinstein manifold $W$. As we
will see later the Floer cohomology of $\Gamma_L$ in $\mathcal{N}
\setminus \Sigma$ coincides with that of $\Gamma_L$ in $W$. The latter
is already a completely standard object in symplectic topology and
enjoys the usual geometric properties expected from the theory.  The
fundamental example of fillable $P_{r_0}$ is when $\Sigma$ appears as
a symplectic hyperplane section in closed symplectic manifold $M$ (of
one complex dimension higher). Then $W = M \setminus \Sigma$ is
Weinstein and if one removes from it the isotropic skeleton $\Delta
\subset W$ we have $W \setminus \Delta \approx \mathcal{N} \setminus
\Sigma$. In view of this we will from now on work in this geometric
framework. Here is the setup.

Let $(M, \omega)$ be a symplectic manifold with an integral symplectic
structure, i.e. $[\omega] \in H^2(M;\mathbb{Z})$. Let $\Sigma \subset
M$ be a symplectic hyperplane section of degree $k$, so that
$PD[\Sigma] = k[\omega]$ (see~\cite{Do:hyperplane}). In this setup,
the {\em Lagrangian circle bundle}
construction~\cite{Bi:Nonintersections, Bi-Ci:closed} associates to
every Lagrangian submanifold $L \subset \Sigma$ a new Lagrangian
submanifold $\Gamma_L \subset M \setminus \Sigma$ which topologically
is a circle bundle over $L$. The construction of $\Gamma_L$ is roughly
the following (see~\S\ref{S:lag-s1-constr} and more
specifically~\S\ref{Sb:lag-s1} for the precise details). Take a
tubular neighborhood $\mathcal{U}$ of $\Sigma$ in $M$ which looks like
a disk bundle over $\Sigma$, say $\mathcal{U} \to \Sigma$. Its
boundary $P =
\partial \mathcal{U}$ is a circle bundle $\pi: P \to \Sigma$ over
$\Sigma$. Define $$\Gamma_L = \pi^{-1}(L) \subset M \setminus
\Sigma.$$ For an appropriate choice of the neighborhood $\mathcal{U}$
the resulting $\Gamma_L$ will be a Lagrangian submanifold of $M
\setminus \Sigma$. This procedure, which was introduced
in~\cite{Bi:Nonintersections, Bi-Ci:closed}, proved to be useful for
studying Lagrangians in manifolds $\Sigma$ that appear as hyperplane
sections (in some manifold $M$). The point is that the symplectic
topology of $M \setminus \Sigma$ is sometimes easier to study than
that of $\Sigma$ itself.

As $\Gamma_L \to L$ is a circle bundle the singular cohomologies of
the manifolds $\Gamma_L$ and $L$ are related by the Gysin long exact
sequence. As we will see soon, there is an analogous long exact
sequence relating their Floer cohomologies too.

Before we state our main theorem we need to introduce some notation
and elaborate more about the setting. Given a symplectic hyperplane
section $\Sigma \subset M$, put $W = M \setminus \Sigma$.  We will
assume from now on that $W$ is a Weinstein manifold. (This is often
assumed as part of the definition of ``symplectic hyperplane
section''.) The basic familiar example is when $M$ is K\"{a}hler and
$\Sigma$ is a complex hyperplane section (then $W$ is in fact affine).
As for the Lagrangian $L \subset \Sigma$ we will henceforth assume
that it is monotone with minimal Maslov number $N_L \geq 2$ (see
e.g.~\cite{Bi-Co:rigidity} for the definition). In what follows we
will mostly work with $\mathbb{Z}_2$ as the ground field both for
Floer cohomology as well as for singular cohomology. In particular
when we refer to the Euler class $e$ of the circle bundle $\Gamma_L
\to L$ we actually mean the $\mathbb{Z}_2$-reduction of the integral
Euler class, so that $e \in H^2(L;\mathbb{Z}_2)$.

We denote by $HF^*(L)$ the Floer cohomology of the pair $(L,L)$. Since
$L$ is monotone the coefficient ring will be taken to be the ring of
Laurent polynomials $\Lambda = \mathbb{Z}_2[t^{-1},t]$ where $\deg t =
N_L$ (see e.g.~\cite{Bi-Co:rigidity}). Similarly we denote by
$HF^*(\Gamma_L)$ the Floer cohomology of the pair $(\Gamma_L,
\Gamma_L)$. Note that $\Gamma_L$ can be viewed as a Lagrangian
submanifold of both $W$ and $M$. Here, by $HF^*(\Gamma_L)$ we mean the
Floer cohomology {\em in $W$} (not in $M$!). By the results
of~\cite{Bi:Nonintersections} when $\dim_{\mathbb{R}} \Sigma \geq 4$
the monotonicity of $L$ implies that $\Gamma_L \subset W$ is monotone
too and that $N_{\Gamma_L} = N_L$.  The same continues to hold if
$\dim_{\mathbb{R}} \Sigma = 2$ provided that $W$ is subcritical.

Our main result is the following.
\begin{thm} \label{T:exact-seq-1} Let $M$, $\Sigma$ and $L \subset
   \Sigma$ be as above and assume that either $\dim_{\mathbb{R}}
   \Sigma \geq 4$ or $W$ is subcritical. Then there exist canonical
   maps $$i: HF^*(L) \to HF^*(\Gamma_L), \quad p: HF^*(\Gamma_L) \to
   HF^{*-1}(L)$$ and a class $e_F \in HF^2(L)$ which all fit together
   into the following long exact sequence:
   \begin{equation*}
      \begin{CD}
         \cdots @>>> HF^{k}(L) @>{* e_F}>> HF^{k+2}(L) @>{i}>>
         HF^{k+2}(\Gamma_L) @>{p}>> HF^{k+1}(L) @>>> \cdots
      \end{CD}
   \end{equation*}
   where $* e_F$ stand for the Floer quantum product with the class
   $e_F$. Moreover, the maps $i$ and $p$ satisfy the following
   multiplicative properties with respect to the quantum products on
   $HF(L)$ and $HF(\Gamma_L)$:
   \begin{equation} \label{eq:multip-i-p} i(\alpha*\beta) =
      i(\alpha)*i(\beta), \quad p(\widetilde{\alpha}*i(\beta)) =
      p(\widetilde{\alpha})*\beta, \quad
      p(i(\alpha)*\widetilde{\beta}) = \alpha*p(\widetilde{\beta}),
   \end{equation}
   for every $\alpha, \beta \in HF^*(L)$ and $\widetilde{\alpha},
   \widetilde{\beta} \in HF^*(\Gamma_L)$.
\end{thm}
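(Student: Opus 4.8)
The plan is to realize the Floer-Gysin sequence as the Floer-theoretic incarnation of a geometric relation between $L\subset\Sigma$ and $\Gamma_L\subset W$, and to do so via the pearl complex (Lagrangian quantum homology) rather than via a direct analysis of $J$-holomorphic strips. The key geometric input is that $\Gamma_L=\pi^{-1}(L)$ is the circle bundle $P=\partial\mathcal{U}$ restricted over $L$; choosing the Morse datum on $\Gamma_L$ to be pulled back (generically perturbed) from one on $L$ together with a Morse function on the circle fibre having exactly two critical points, one gets a chain-level splitting $C_*(\Gamma_L)\cong C_*(L)\oplus C_{*-1}(L)$ as modules over $\Lambda$. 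The first step is to set up these compatible data and to describe the pearl differential on $C_*(\Gamma_L)$ in block-triangular form with respect to this splitting: the diagonal blocks recover the pearl differential of $L$ (twice), while the off-diagonal block, which counts pearly trajectories that ``wind once around the fibre,'' is (up to chain homotopy) cap product with the pearl representative $e_F$ of the Euler class. This is where one must be careful: the holomorphic discs in $W$ with boundary on $\Gamma_L$ must be controlled, and here one invokes the results of~\cite{Bi:Nonintersections} quoted in the excerpt — monotonicity of $\Gamma_L$ with $N_{\Gamma_L}=N_L$, and the structure of low-index discs — which is precisely the point where the hypothesis $\dim_{\mathbb R}\Sigma\ge 4$ or $W$ subcritical enters.

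Granting the block-triangular description, the long exact sequence is then the algebraic mapping-cone exact sequence associated to the short exact sequence of complexes
\begin{equation*}
   0 \longrightarrow C_{*-1}(L) \longrightarrow C_*(\Gamma_L) \longrightarrow C_*(L) \longrightarrow 0,
\end{equation*}
whose connecting homomorphism is, by the previous step, the map $*e_F$ on $HF^*(L)$. The inclusion of the sub gives $p:HF^*(\Gamma_L)\to HF^{*-1}(L)$ (integration over the fibre) and the projection onto the quotient gives, after shifting, $i:HF^*(L)\to HF^*(\Gamma_L)$ (the unit-section map). One must check that these chain maps are independent of the auxiliary choices; this follows from the usual continuation-map arguments in the pearl complex, comparing two choices through a homotopy of Morse–Floer data, exactly as in the construction of quantum module structures in~\cite{Bi-Co:rigidity}. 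A small subtlety is orientation/sign: over $\mathbb{Z}_2$ this disappears, which is why we work over $\mathbb{Z}_2$ throughout.

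Finally, the multiplicative properties in~\eqref{eq:multip-i-p} are proved at the chain level using the quantum (pearl) product. For $i(\alpha*\beta)=i(\alpha)*i(\beta)$ one observes that, with the pulled-back data, a pearly configuration on $\Gamma_L$ with three marked points whose inputs and output all lie in the ``section'' summand $C_*(L)$ projects to a pearly configuration on $L$, and conversely every such configuration on $L$ lifts uniquely once we fix the fibre coordinate at one input — so the product structures agree under $i$. The projection formulas $p(\widetilde\alpha*i(\beta))=p(\widetilde\alpha)*\beta$ and $p(i(\alpha)*\widetilde\beta)=\alpha*p(\widetilde\beta)$ are the $\Lambda$-linearity (more precisely, the module-map property) of $p$ over the subring $i(HF^*(L))$, and follow from the same correspondence of moduli spaces by tracking which inputs carry the nontrivial fibre class: since $i(\beta)$ contributes the trivial fibre class, the fibre ``defect'' of the configuration is carried entirely by $\widetilde\alpha$, and pushing forward commutes with the product.

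The step I expect to be the main obstacle is establishing the block-triangular form of the pearl differential on $C_*(\Gamma_L)$, and in particular identifying the off-diagonal term with (a chain representative of) $*e_F$: this requires a careful analysis of moduli spaces of pearly trajectories in $W$ whose constituent discs have boundary on the circle bundle $\Gamma_L$, a gluing/compactness argument to see that the ``fibre-winding'' contributions assemble into the Euler class, and transversality for the pulled-back (and then perturbed) data. Everything downstream — the exact sequence and the multiplicativity — is then formal homological algebra together with standard moduli-space correspondences.
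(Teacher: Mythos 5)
Your high-level strategy is the paper's: work in the pearl-complex model, take the Morse datum on $\Gamma_L$ to be a negative almost gradient lift of the Morse datum on $L$ together with a fibre Morse function that contributes two critical points $x',x''$ per base critical point $x$, and extract the long exact sequence from a short exact sequence of pearl complexes. However, as written your short exact sequence goes the wrong way. You posit $0\to C_{*-1}(L)\to C_*(\Gamma_L)\to C_*(L)\to 0$ with $C_{*-1}(L)$ as the sub and then say the inclusion of the sub gives $p$ and the quotient projection gives $i$. That is inconsistent on two counts: a sequence with sub $\cplx^{*-1}(L)$ and quotient $\cplx^{*}(L)$ produces a degree-zero connecting homomorphism $QH^k(L)\to QH^k(L)$, not the degree-$2$ map you need for an Euler class; and an inclusion cannot induce a map \emph{out of} $HF^*(\Gamma_L)$. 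The correct statement is that the subcomplex is the same-degree copy $\cplx^*(L)$ spanned by the fibre minima $x'$ (with $|x'|=|x|$), the quotient is the shifted copy $\cplx^{*-1}(L)$ spanned by the fibre maxima $x''$ (with $|x''|=|x|+1$), the inclusion induces $i$, and the projection induces $p$; you have swapped both the summands and the labels.

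More substantively, the step you yourself identify as ``the main obstacle''---establishing that the pearl differential on $\Gamma_L$ is block-triangular, equivalently that $i$ and $p$ are chain maps---is left unaddressed, and the ingredients you mention (monotonicity of $\Gamma_L$, generic perturbation, ``careful analysis'') do not get there on their own. Two specific constructions are required and neither appears in your sketch. First, one must confine the $J$-holomorphic discs appearing in index-$0$ pearly trajectories on $\Gamma_L$ to the region of $W$ where the projection $W\setminus\Delta\to\Sigma$ is $(J,J_\Sigma)$-holomorphic; this fails near the skeleton $\Delta$, and the paper handles it with an SFT-style stretching-the-neck argument (Proposition~\ref{p:stretching-neck}) to force all relevant discs into the annulus bundle $E_{r_0,r_0+\varepsilon}$. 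Second, lifting a $J_\Sigma$-holomorphic disc $u:(D,\partial D)\to(\Sigma,L)$ to a $J$-holomorphic disc with boundary on the circle bundle $\Gamma_L$ is a boundary-value problem, solved by reducing to the Dirichlet problem for the log-modulus of the fibre coordinate (Lemma~\ref{l:lift-a-disk}); this is what produces the $S^1$-family of lifts and the counting identities~\eqref{Eq:lift-1}. Finally, the admissible almost complex structures are far from generic (they are constrained by $J_\Sigma$ and the neck length), so transversality does not come for free; the paper obtains it by noting that the linearized $\bar\partial$-operator splits into base and fibre pieces along a holomorphic trivialization of $(\pi\circ u)^*\mathcal{N}$, so regularity of $J_\Sigma$ propagates (\S\ref{sb:reg-J_R}--\ref{sb:trans-ind-0}). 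Without these three ideas the proposal is a plan but not a proof; with them it is the paper's argument. One further difference worth noting: the paper does not identify the off-diagonal block of the differential directly with multiplication by $e_F$; instead it first proves the multiplicative identities~\eqref{eq:multip-i-p-2}, then defines $e_F:=\delta(1)$, and deduces $\delta(\alpha)=\alpha* e_F$ from the Leibniz rule, which is cleaner than the chain-level identification you envisage.
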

A similar theorem (in a somewhat different setting) has been
independently obtained by Perutz~\cite{Per:gysin} by completely
different methods, based on the theory of quilted Floer homology
developed by Wehrheim-Woodward~\cite{We-Wo:funct-quilted,
  We-Wo:quilted}.

The exact sequence of Theorem~\ref{T:exact-seq-1} can be regraded as a
Floer-homological analogue of the classical Gysin sequence associated
to the circle bundle $\Gamma_L \to L$. Indeed, if we replace the Floer
cohomologies by singular cohomologies in the above sequence and the
class $e_F \in HF^2(L)$ by the Euler class $e \in H^2(L;\mathbb{Z}_2)$
of $\Gamma_L \to L$ we get precisely the Gysin sequence. For this
reason we call this sequence the Floer-Gysin sequence and the class
$e_F$ the Floer-Euler class.  Moreover, we will see below that the
maps $i$ and $p$ are in fact Floer-homological analogues of the pull
back map $\pi^*$ of $\pi: \Gamma_L \to L$ and of the integration along
the fiber, respectively.

Note that since $\Sigma \subset M$ represents the Poincar\'{e} dual to
a multiple of $[\omega]$ and $L \subset \Sigma$ is Lagrangian the
bundle $\Gamma_L \to L$ is flat and so the $\mathbb{Z}$-Euler class is
torsion in $H^2(L;\mathbb{Z})$. This might look like a restrictive
situation for the choice of bundles $\Gamma_L$, however the main
object of study here is $L$ rather than $\Gamma_L$. In fact $\Gamma_L$
can be viewed as an auxiliary object for studying $L$.

In what follows we will actually establish a more general theorem
than~\ref{T:exact-seq-1} which allows to take $L$ to be a monotone
Lagrangian submanifold of $\Sigma \times Q$ for any closed symplectic
manifold $Q$. In contrast to the case $Q = \textnormal{pt}$, in this
case the circle bundle $\Gamma_L \to L$ is not necessarily flat
anymore. This generalization is described in~\S\ref{S:further}. An
application of this generalization to the topology of symplectic
hyperplane sections with subcritical complement is presented in
Theorem~\ref{t:QH-Sigma-2-priodic} in~\S\ref{S:applications-exp}.

\subsection{Applications} \label{Sb:applications} Here is an immediate
corollary of Theorem~\ref{T:exact-seq-1}.
\begin{cor} \label{C:subcrit-1} Suppose that $\Sigma$ appears as a
   symplectic hyperplane section in a symplectic manifold $M$ such
   that $W = M \setminus \Sigma$ is subcritical. Let $L \subset
   \Sigma$ be a monotone Lagrangian submanifold with $N_L \geq 2$.
   Then, either $HF(L)=0$, or the Floer-Euler class $e_F \in HF^2(L)$
   is invertible with respect to the quantum product. In particular
   $HF^*(L)$ is $2$--periodic, i.e.  for every $i \in \mathbb{Z}$
   there exists an isomorphism $HF^i(L) \cong HF^{i+2}(L)$.
\end{cor}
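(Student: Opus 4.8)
The plan is to derive Corollary~\ref{C:subcrit-1} as an essentially formal consequence of the Floer--Gysin exact sequence of Theorem~\ref{T:exact-seq-1}, the only real geometric input being the vanishing of $HF^*(\Gamma_L)$. So the first thing I would do is show that $HF^*(\Gamma_L)=0$. Since $L$ is a closed manifold and $\Gamma_L\to L$ is a circle bundle, $\Gamma_L$ is a \emph{closed} Lagrangian submanifold of $W=M\setminus\Sigma$; as $W$ is subcritical, $\Gamma_L$ is monotone with $N_{\Gamma_L}=N_L\geq 2$ (as recalled above), so $HF^*(\Gamma_L)$, taken in $W$, is well defined. Now I would invoke two standard facts: in a subcritical Weinstein manifold every closed Lagrangian submanifold is displaceable by a Hamiltonian isotopy (\cite{Bi-Ci:closed}, \cite{Bi:Nonintersections}), and the Floer cohomology of a displaceable monotone Lagrangian vanishes --- this last being one of the basic geometric properties recorded for $HF^*(\Gamma_L)$ in $W$ earlier in the paper. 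Hence $HF^*(\Gamma_L)=0$.

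Next I would substitute $HF^*(\Gamma_L)=0$ into the long exact sequence
\begin{equation*}
   \begin{CD}
      \cdots @>>> HF^{k}(L) @>{* e_F}>> HF^{k+2}(L) @>{i}>>
      HF^{k+2}(\Gamma_L) @>{p}>> HF^{k+1}(L) @>>> \cdots
   \end{CD}
\end{equation*}
Exactness then immediately shows that $* e_F\colon HF^{k}(L)\to HF^{k+2}(L)$ is an isomorphism for every $k\in\mathbb{Z}$, since on both sides it is sandwiched between terms $HF^{*}(\Gamma_L)=0$. This already yields the $2$--periodicity $HF^i(L)\cong HF^{i+2}(L)$ (which holds trivially if $HF(L)=0$).

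Finally, to get the invertibility statement I would assume $HF(L)\neq 0$, so that $HF^*(L)$ is a ring with unit $1\in HF^0(L)$. Since $* e_F\colon HF^{-2}(L)\to HF^{0}(L)$ is onto, there is $u\in HF^{-2}(L)$ with $e_F*u=1$; multiplying this relation by $e_F$ and using the injectivity of $* e_F$ (equivalently, graded-commutativity of the Floer quantum product over $\mathbb{Z}_2$) gives $u*e_F=1$ as well, so $e_F$ is invertible with inverse $u\in HF^{-2}(L)$. Multiplication by $e_F$ is then an explicit isomorphism $HF^i(L)\xrightarrow{\ \cong\ }HF^{i+2}(L)$.

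I expect the only genuinely non-formal step to be the first one: establishing that $\Gamma_L\subset W$ is displaceable when $W$ is subcritical and that this forces $HF^*(\Gamma_L)=0$. Both ingredients are by now standard --- the displaceability going back to~\cite{Bi-Ci:closed, Bi:Nonintersections}, and the displaceability/vanishing criterion being part of the foundational properties of $HF^*(\Gamma_L)$ set up in this paper --- so once they are in place the remainder is pure homological algebra with the exact sequence together with the ring structure on $HF^*(L)$.
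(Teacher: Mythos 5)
Your argument is correct and follows essentially the same route as the paper: displaceability of closed Lagrangians in subcritical Weinstein manifolds forces $HF^*(\Gamma_L)=0$, and plugging this into the Floer--Gysin sequence shows $*\,e_F$ is an isomorphism in every degree, whence $2$-periodicity and (if $HF(L)\neq 0$) invertibility of $e_F$. One small caution on the parenthetical in your last step: the Lagrangian quantum product on $HF^*(L)$ is in general \emph{not} graded-commutative (the paper stresses this in \S\ref{S:products}), so "equivalently, graded-commutativity" is not available; the two-sidedness of the inverse is instead secured either by your injectivity argument, or directly by Proposition~\ref{p:FE-mult}, which states $\alpha*e_F=e_F*\alpha$ for all $\alpha$, i.e.\ $e_F$ is central.
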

See~\S\ref{sb:proofs-applic} for the proof. The simplest example
satisfying this corollary is $M = {\mathbb{C}}P^{n+1}$ and $\Sigma =
{\mathbb{C}}P^n$, since we have $W = {\mathbb{C}}P^{n+1} \setminus
{\mathbb{C}}P^n \approx \textnormal{Int\,} B^{2n+2}(1)$.

Here is another corollary related to subcriticality.
\begin{cor} \label{C:subcrit-2} Let $L \subset \Sigma$ be as in
   Corollary~\ref{C:subcrit-1} but assume now that $N_L \geq 3$.
   Denote by $\mathcal{N} \to \Sigma$ the normal bundle of $\Sigma$ in
   $M$. If $HF(L) \neq 0$ then the {\em classical} Euler class $e \in
   H^2(L;\mathbb{Z}_2)$ of the restriction $\mathcal{N}|_L$ is
   non-trivial. In particular the circle bundle $\Gamma_L \to L$ is
   non-trivial and $H^2(L;\mathbb{Z})$ has torsion.
\end{cor}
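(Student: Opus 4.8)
The plan is to play the invertibility of the Floer--Euler class $e_F$ against the hypothesis $N_L\ge 3$ by means of the ``classical limit'' spectral sequence attached to the Floer (pearl) complex of $L$. Since $W=M\setminus\Sigma$ is subcritical and $N_L\ge 2$, Corollary~\ref{C:subcrit-1} applies; as we are assuming $HF(L)\neq 0$, the class $e_F\in HF^2(L)$ is invertible for the quantum product, so in particular it is not nilpotent. Recall also the multiplicative spectral sequence $\{E_r,d_r\}$ with $E_1\cong H^*(L;\mathbb{Z}_2)\otimes\Lambda$ converging to $HF^*(L)$, arising from the filtration $\mathcal{F}^\bullet$ of the Floer complex by powers of the quantum variable $t$ (here $\deg t=N_L$); on $E_1$ the quantum product specializes to the ordinary cup product on $H^*(L;\mathbb{Z}_2)$. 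The single nontrivial input I will need from the body of the paper is the identification of the leading term of $e_F$: under this spectral sequence the quantum multiplication $*\,e_F\colon HF^*(L)\to HF^{*+2}(L)$ reduces on the $E_1$-page to cup product with the classical Euler class $e\in H^2(L;\mathbb{Z}_2)$. This is precisely the assertion that the Floer--Gysin sequence degenerates to the classical Gysin sequence.

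Assume now, toward a contradiction, that $e=0$. Then $\cup\,e$ is the zero operator on $H^*(L;\mathbb{Z}_2)$, so $*\,e_F$ vanishes on $E_1$; equivalently $*\,e_F$ strictly raises the filtration, $(*\,e_F)\bigl(\mathcal{F}^p HF^k(L)\bigr)\subseteq\mathcal{F}^{p+1}HF^{k+2}(L)$. In each fixed degree this filtration is finite: $\mathcal{F}^p HF^k(L)=0$ once $pN_L>k$, while $\mathcal{F}^p HF^k(L)=HF^k(L)$ once $pN_L\le k-\dim L$. Iterating, $(*\,e_F)^m$ sends $HF^k(L)$ into $\mathcal{F}^{p_0+m}HF^{k+2m}(L)$ for any $p_0$ with $p_0N_L\le k-\dim L$, and this target is $0$ as soon as $(p_0+m)N_L>k+2m$, i.e. $m(N_L-2)>k-p_0N_L$; choosing $p_0=\lfloor (k-\dim L)/N_L\rfloor$ makes $k-p_0N_L$ bounded independently of $k$, and since $N_L-2\ge 1$ this holds for all large $m$. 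Hence $(*\,e_F)^m=0$ for $m\gg 0$, so $e_F^{\,m}=(*\,e_F)^m(1)=0$, contradicting the non-nilpotence of $e_F$. Therefore $e\neq 0$ in $H^2(L;\mathbb{Z}_2)$. (This is the only place the strengthened hypothesis $N_L\ge 3$ is used, and it is essential: for $N_L=2$ the class $e_F$ may have leading $t$-power $1$ and still be a unit even though $e=0$.)

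It remains to pass to integral coefficients. The normal bundle $\mathcal{N}\to\Sigma$ satisfies $c_1(\mathcal{N})=PD[\Sigma]=k[\omega]$, hence $c_1(\mathcal{N}|_L)=k\,[\omega]|_L$; since $L$ is Lagrangian, $[\omega]|_L=0$ in $H^2(L;\mathbb{R})$, so $c_1(\mathcal{N}|_L)$ is a torsion class in $H^2(L;\mathbb{Z})$. Its $\mathbb{Z}_2$-reduction is the Euler class $e$ of the circle bundle $\Gamma_L\to L$, which we have just shown to be nonzero; therefore $c_1(\mathcal{N}|_L)\neq 0$. A nonzero torsion element of $H^2(L;\mathbb{Z})$ shows simultaneously that $H^2(L;\mathbb{Z})$ has torsion and that $\Gamma_L\to L$ is nontrivial.

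The main obstacle is the one input flagged above: establishing, at the chain level of the Floer--Gysin construction, that the leading ($t^0$) term of the Floer--Euler class $e_F$ is exactly the classical Euler class $e$, so that $*\,e_F$ indeed reduces to $\cup\,e$ on the $E_1$-page. Granting this compatibility, the remainder is elementary bookkeeping with a finite, multiplicative, $t$-adic filtration, the hypothesis $N_L\ge 3$ serving only to upgrade ``strictly raises filtration'' to ``nilpotent''.
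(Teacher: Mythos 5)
Your argument is correct, and it rests on the same single key input as the paper's proof: the identification of the leading ($t^0$) term of $e_F$ with the classical Euler class $e$, which in the paper's language is the statement $\sigma(e_F)=e$ from~\S\ref{sb:qh-vs-sing}, where $\sigma$ is the specialization $t\mapsto 0$. Where you diverge is in the logic used to exploit $N_L\ge 3$, and your route is longer than necessary. The paper (Remark~\ref{r:spec-N_L>3}) notes that the chain-level short exact sequence~\eqref{eq:exact-seq-spec} has $t\,\mathcal{C}_+^{2-N_L}(L;\data)=0$ when $N_L\ge 3$, so the resulting long exact sequence makes $\sigma\colon Q^+H^2(L)\to H^2(L;\mathbb{Z}_2)$ \emph{injective} in degree $2$; then $e=0$ forces $e_F=0$ outright, contradicting invertibility (granting $HF(L)\ne 0$, as you also correctly note one must). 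Your nilpotence iteration is a valid alternative, but it obscures a simplification available inside your own framework: in total degree $2$ the only $t$-powers that can occur are $i\le\lfloor 2/N_L\rfloor = 0$ once $N_L\ge 3$, so $\mathcal{F}^1\mathcal{C}^2=0$ and hence $\mathcal{F}^1 HF^2(L)=0$; thus ``$*e_F$ raises the filtration'' already yields $e_F\in\mathcal{F}^1HF^2(L)=\{0\}$ with no iteration at all. The iterated argument would only become genuinely necessary in a setting where the degree-$2$ filtration is nontrivial (e.g.\ over $\mathcal{A}=\mathbb{Z}_2[q^{-1},q]$ with $|q|=2$, or if $N_L=2$), where in any case it cannot close --- as you correctly observe at the end of your second paragraph, for $N_L=2$ one gets $N_L-2=0$ and no contradiction. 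Your final passage to the integral statement (torsion of $c_1(\mathcal{N}|_L)$ and nontriviality of $\Gamma_L\to L$) matches the paper's proof exactly.
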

The proof is given in~\S\ref{sb:proofs-applic}. An example of a
Lagrangian satisfying this corollary is $L = {\mathbb{R}}P^n \subset
{\mathbb{C}}P^n$, $n \geq 2$.

Let $\Sigma \subset {\mathbb{C}}P^{n+1}$ be a smooth quadric
hypersurface, endowed with the symplectic structure induced from
${\mathbb{C}}P^{n+1}$. As all such quadrics are symplectomorphic we
choose a specific model: $\Sigma = \{ z_0^2 + \cdots + z^n =
z_{n+1}^2\} \subset {\mathbb{C}}P^{n+1}$. Put $$L_0 = \{ [z_0: \cdots:
z_{n+1}] \in \Sigma \mid z_i \in \mathbb{R} \; \forall i\}.$$ It is
easy to see that $L_0$ is a smooth Lagrangian sphere.
\begin{cor} \label{C:quadric-1} Let $L \subset \Sigma$ be a monotone
   Lagrangian submanifold with $\dim L \geq 2$. If $HF(L) \neq 0$ then
   $L \cap L_0 \neq \emptyset$.
\end{cor}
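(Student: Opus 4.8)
The plan is to prove the contrapositive in the form $HF(L,L_0)\neq 0$, where $HF(L,L_0)$ denotes the Floer cohomology of the pair $(L,L_0)$ in $\Sigma=Q^n$. This suffices: if $L\cap L_0=\emptyset$ then a $C^\infty$-small Hamiltonian perturbation of $L_0$ stays disjoint from $L$, the Floer chain complex is the zero complex, and $HF(L,L_0)=0$. The pair $(L,L_0)$ is admissible because both Lagrangians inherit the monotonicity constant of $\Sigma$, and because the vanishing-cycle sphere $L_0\subset Q^n$ has $N_{L_0}=2n\geq 4$ (for $n\geq 3$ one has $\pi_2(Q^n,L_0)=\pi_2(Q^n)$, on which $\mu=2c_1$; the case $n=2$, i.e.\ the anti-diagonal in $\mathbb{C}P^1\times\mathbb{C}P^1$, is checked directly), so $HF(L,L_0)$ is defined over $\Lambda$.

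Next I would pass to the Lagrangian circle bundles. Realize $\Sigma=Q^n$ as a symplectic hyperplane section of $M=Q^{n+1}$ (the induced symplectic form coincides, up to the harmless overall scaling, with the one coming from $\mathbb{C}P^{n+1}$); then $W=M\setminus\Sigma$ is the affine quadric, a Weinstein manifold symplectomorphic to $T^*S^{n+1}$, so we are in the setting of Theorem~\ref{T:exact-seq-1}. Since $\dim_{\mathbb R}\Sigma=2n\geq 4$ the circle bundles $\Gamma_L,\Gamma_{L_0}\subset W$ are monotone with minimal Maslov numbers $N_L,N_{L_0}$, and since $PD[\Sigma]=[\omega_M]$ the bundle $\Gamma_{L_0}\to L_0=S^n$ is flat, hence trivial ($H^2(S^n;\mathbb Z)$ being torsion-free): $\Gamma_{L_0}\cong S^n\times S^1$. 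Identifying $\partial_\infty W=ST^*S^{n+1}$ with the prequantization circle bundle $\pi:P\to Q^n$, one sees $\Gamma_{L_0}$ is (a push-in of) $\pi^{-1}(L_0)$, i.e.\ the flow-out under the geodesic (Reeb) flow of a cotangent-fibre sphere $S^n\subset ST^*S^{n+1}$. Now I would use the bilinear analogue of Theorem~\ref{T:exact-seq-1} --- the Floer--Gysin long exact sequence for the pair $(L,L_0)$, built by the construction of that theorem with the two Lagrangians $\Gamma_L$ and $\Gamma_{L_0}$ in place of $\Gamma_L$ and $\Gamma_L$ --- which relates $HF^*(L,L_0)$ and $HF^*_W(\Gamma_L,\Gamma_{L_0})$; exactness immediately gives $HF^*_W(\Gamma_L,\Gamma_{L_0})\neq 0\Rightarrow HF^*(L,L_0)\neq 0$. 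Thus it is enough to prove the non-vanishing $HF^*_W(\Gamma_L,\Gamma_{L_0})\neq 0$ in $W=T^*S^{n+1}$.

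This last non-vanishing is the heart of the matter, and where I expect the real work to be. The idea is that $\Gamma_{L_0}$, being the geodesic flow-out of a cotangent fibre (the generator of the wrapped theory on $T^*S^{n+1}$), split-generates the relevant part of the monotone Fukaya category of $W$ --- and $\Gamma_L$ and $\Gamma_{L_0}$ lie in the same summand of that category, since their disk-counting obstruction numbers agree over $\mathbb Z_2$ (each reduces to the corresponding number for $L$, resp.\ $L_0$, in $Q^n$, and these vanish mod $2$ when the Floer cohomology is non-zero, as $c_1(Q^n)\!*$ is nilpotent over $\mathbb Z_2$). Granting this, $HF_W(\Gamma_L,\Gamma_{L_0})\neq 0$ for every $\Gamma_L$ not isomorphic to the zero object. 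That $\Gamma_L\not\cong 0$ follows from $HF(L)\neq 0$ together with Theorem~\ref{T:exact-seq-1} applied to $L$ itself: the Floer--Gysin sequence shows that either $HF_W(\Gamma_L)\neq 0$ --- and then $\Gamma_L\not\cong 0$ --- or else $e_F\in HF^2(L)$ is invertible and $HF^*(L)$ is $2$-periodic, in which case one argues separately, using that the connecting map of the bilinear sequence is multiplication by this same $e_F$ and that the vanishing-cycle sphere satisfies $HF(L_0)\neq 0$, that $HF(L,L_0)$ cannot vanish. The principal obstacle is therefore the structural understanding of the Fukaya category of the affine quadric $T^*S^{n+1}$, together with the precise setup of the bilinear Floer--Gysin sequence and the identification of its connecting homomorphism; once these are in hand the remainder is formal.
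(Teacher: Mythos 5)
Your proposal takes a genuinely different route from the paper, but it has several gaps that are not cosmetic.

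The paper proves this statement (inside Corollary~\ref{C:quadric-2}) by embedding $\Sigma = Q^n$ in $M = {\mathbb{C}}P^{n+1}$, \emph{not} in $Q^{n+1}$, and this choice is essential. There the normal bundle is $\mathcal{O}_{{\mathbb{C}}P^{n+1}}(2)|_\Sigma$, so its $\bmod\,2$ reduction $c$ vanishes, and by Proposition~\ref{p:quant-rest-c_1} the Floer--Euler class $e_F = r_L(c) = 0$. The geometric input, quoted from~\cite{Bi:Nonintersections}, is that the hypothesis $L\cap L_0=\emptyset$ forces $\Gamma_L$ to be displaceable in the Weinstein completion of $W={\mathbb{C}}P^{n+1}\setminus\Sigma$; then $HF(\Gamma_L)=0$, the Floer--Gysin sequence of Theorem~\ref{T:exact-seq-1} makes $e_F$ invertible, and since $e_F=0$ this forces $QH(L)=0$, contradicting $HF(L)\neq 0$. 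The whole proof is the unilinear exact sequence plus the computation $e_F=0$ plus that one displaceability fact; no pair-Floer cohomology, no Fukaya category.

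Your argument, by contrast, rests on two substantial ingredients that are not available. First, the ``bilinear'' Floer--Gysin sequence relating $HF^*(L,L_0)$ to $HF^*_W(\Gamma_L,\Gamma_{L_0})$: the paper explicitly lists this as an open problem in~\S\ref{s:discussion} (``It seems plausible that \ldots there should be a long exact sequence relating $HF(L_1,L_2)$ to $HF(\Gamma_{L_1},\Gamma_{L_2})$''). You cannot cite ``the construction of that theorem'' for it, because the construction there compares pearly trajectories for a \emph{single} Lagrangian with its lift; the two-Lagrangian analogue is genuinely different Floer machinery. Second, the split-generation of the relevant piece of the monotone Fukaya category of the affine quadric $W\cong T^*S^{n+1}$ by $\Gamma_{L_0}$ is a heavy, unproven input — the classical generation statements are for the \emph{wrapped} category by a cotangent fibre, and the $\Gamma_L$'s are compact monotone (not exact) Lagrangians, so this needs a separate argument you do not supply.

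Even granting both ingredients, the logic does not close. Your reduction requires $\Gamma_L$ to be nonzero in the Fukaya category, which you know only when $HF_W(\Gamma_L)\neq 0$. In the complementary case you observe that $e_F$ is invertible and $HF^*(L,L_0)$ is $2$-periodic under $*\,e_F$ — but $2$-periodicity of a module under an invertible element gives no information on whether the module is nonzero, so ``one argues separately \ldots that $HF(L,L_0)$ cannot vanish'' is not an argument. Note also that you never actually invoke the hypothesis $L\cap L_0=\emptyset$ anywhere in the Floer-theoretic part: you are in effect trying to prove the unconditional statement $HF(L,L_0)\neq 0$ whenever $HF(L)\neq 0$, which is strictly harder than what is needed. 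Finally, by choosing the embedding $Q^n\subset Q^{n+1}$ rather than $Q^n\subset {\mathbb{C}}P^{n+1}$ you give up the one concrete computation the paper exploits: in your setup the normal bundle is $\mathcal{O}_{Q^{n+1}}(1)|_{Q^n}$, whose $\bmod\,2$ Chern class is nonzero, so the clean vanishing $e_F=0$ — and with it the short contradiction — is lost.
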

We will prove in~\S\ref{sb:proofs-applic} a slightly stronger result.
Note that the quadric $\Sigma$ has many Lagrangians $L$ satisfying the
conditions appearing in the Corollary (see Section~1.3
in~\cite{Bi:Nonintersections} for such examples).

Since quite a few of the Corollaries above make use of the assumption
that $HF(L) \neq 0$ it is worthwhile to list some topological
conditions on $L$ that ensure this assumption.
\begin{prop}[See~\cite{Bi-Co:rigidity}]
   \label{p:conditions-for-non-narrow} Let $L \subset \Sigma$ be a
   monotone Lagrangian submanifold with minimal Maslov number $N_L$.
   Assume that $L$ satisfies one of the following conditions:
   \begin{enumerate}
     \item $N_L \geq 3$ and the cohomology ring of $L$,
      $H^*(L;\mathbb{Z}_2)$ is generated by $H^1(L;\mathbb{Z}_2)$ with
      respect to the cup product.
     \item More generally, assume that $H^*(L;\mathbb{Z}_2)$ is
      generated by $H^{< N_L -1}(L;\mathbb{Z}_2)$.
     \item $H_i(L;\mathbb{Z}_2) = 0$ for every $i \in \mathbb{Z}$ with
      $i \equiv -1 \bmod(N_L)$. (This happens for example if $L
      \approx S^n$ with $N_L \not{\mid} \,\,n+1$.)
   \end{enumerate}
   Then $HF(L) \neq 0$. In fact, we have $HF(L) \cong
   H(L;\mathbb{Z}_2) \otimes \Lambda$.
\end{prop}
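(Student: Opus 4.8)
The proposition concerns the (self-)Floer cohomology of a monotone Lagrangian, and the natural framework is the pearl complex / Lagrangian quantum homology of Biran--Cornea, exactly as referenced. The plan is to use the spectral sequence of the pearl complex: for a monotone Lagrangian $L$ with minimal Maslov number $N_L$, there is a spectral sequence converging to $HF^*(L)$ whose first page is built from $H^*(L;\mathbb{Z}_2) \otimes \Lambda$, with differential $d_1$ (and more generally $d_r$) shifting degree by $1 - r N_L$ (in the appropriate normalization); equivalently $d_r$ is a map $H^i(L;\mathbb{Z}_2) \to H^{i + 1 - rN_L}(L;\mathbb{Z}_2) \otimes t^{\text{(power)}}$. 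The whole point is to show all higher differentials vanish, so that the spectral sequence degenerates at page one and $HF(L) \cong H(L;\mathbb{Z}_2)\otimes\Lambda$, which in particular is nonzero since $L$ is a nonempty closed manifold.

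First I would set up the degree bookkeeping carefully and record the key structural fact: each $d_r$ ($r \ge 1$) is a derivation with respect to the cup product on $H^*(L;\mathbb{Z}_2)$ on the first page (this is a standard property of the pearl-complex spectral sequence — the first-page product is the classical cup product, and the differential is a derivation for it). Then the three cases are handled as follows. For case (3): $d_r$ shifts cohomological degree by $1 - rN_r \equiv 1 \pmod{N_L}$ wait — more precisely by an amount $\equiv 1 - r N_L \equiv 1 \pmod{N_L}$ is not what I want; the target degree of a class in $H^i$ under $d_r$ sits in degree $i + 1 - rN_L$. Since $r \ge 1$, for the differential to be possibly nonzero one needs $H^{i+1-rN_L} \ne 0$; but also, tracking the filtration, the relevant obstruction groups all lie in degrees $\equiv -1 \pmod{N_L}$ when measured against the source — so if those groups vanish, every $d_r$ vanishes for degree reasons. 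For cases (1) and (2): if $H^*(L;\mathbb{Z}_2)$ is generated as a ring by classes sitting in degrees $< N_L - 1$, then since each $d_r$ is a derivation and $d_r$ automatically vanishes on $H^j$ for $j < N_L - 1$ (because the target $H^{j+1-rN_L}$ lives in negative degree, hence is zero, once $r \ge 1$ and $j < N_L-1 \le rN_L - 1$), the derivation property forces $d_r = 0$ on all of $H^*(L;\mathbb{Z}_2)$; iterating over $r$ gives degeneration. Case (1) is the special instance where the generating degree is $1$ and $N_L \ge 3$, so $1 < N_L - 1$.

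The main obstacle, and the part that needs the most care, is the derivation property of the higher differentials $d_r$ together with the precise identification of the first page with $(H^*(L;\mathbb{Z}_2)\otimes\Lambda, \text{cup product})$; this is where one must invoke the Biran--Cornea machinery rather than reprove it. One subtlety is that $d_r$ being a derivation on page $r$ uses that all earlier differentials vanished (so that page $r$ still equals $H^*(L;\mathbb{Z}_2)$ with its ring structure intact), which is exactly why the induction on $r$ is organized the way it is: at each stage we have already arranged $d_1 = \cdots = d_{r-1} = 0$, so page $r$ is again the cohomology ring, $d_r$ is a derivation on it, it kills the low-degree generators for dimensional reasons, hence it kills everything. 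A second point to check is the edge case of case (3): one must verify that the hypothesis $H_i(L;\mathbb{Z}_2)=0$ for $i\equiv -1 \pmod{N_L}$ indeed kills \emph{all} the differentials and not merely $d_1$; this follows because the source of any nonzero component of $d_r$ would, after accounting for the $\Lambda$-grading, land in a homological degree congruent to $-1 \bmod N_L$, which is assumed to vanish. Once degeneration is established, $HF(L) \cong H(L;\mathbb{Z}_2)\otimes\Lambda \ne 0$ follows immediately, proving the proposition. For the parenthetical example $L \approx S^n$ with $N_L \nmid n+1$: the only potentially problematic group is $H^n \cong \mathbb{Z}_2$ in degree $n$, but $n \not\equiv -1 \pmod{N_L}$ means $n+1 \not\equiv 0$, so case (3) applies directly.
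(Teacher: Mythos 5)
The approach you chose --- the multiplicative Oh-type spectral sequence of the pearl complex --- is indeed the right one; the paper proves nothing here but cites~\cite{Bi-Co:rigidity}, and that reference runs exactly this argument. Your treatment of cases (1) and (2) via the derivation property is correct and complete: inductively, once $\delta_1 = \dots = \delta_{r-1} = 0$, the $r$-th page is still the cup-product ring $H^*(L;\mathbb{Z}_2)\otimes\Lambda$, $\delta_r$ is a derivation on it, and it kills any class of degree $j < N_L - 1$ because the target $H^{j+1-rN_L}$ sits in negative degree; since the ring is generated by such classes, $\delta_r$ vanishes.

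The argument for case (3), however, is not a computation you have merely phrased loosely --- it has a genuine gap hiding behind the sentence ``the relevant obstruction groups all lie in degrees $\equiv -1 \pmod{N_L}$.'' The differential $\delta_r\colon H^q(L) \to H^{q+1-rN_L}(L)$ shifts the residue of the degree modulo $N_L$ by exactly $+1$. The hypothesis $H^j(L;\mathbb{Z}_2)=0$ for $j\equiv -1 \pmod{N_L}$ therefore forces the \emph{source} to vanish only when $q\equiv -1$, and the \emph{target} to vanish only when $q\equiv -2$; for the remaining residue classes $q\equiv 0,1,\dots,N_L-3$ the degree constraint alone tells you nothing. This gap is invisible when $N_L=2$ (those two residues exhaust $\mathbb{Z}/2$, and ``$H_{\mathrm{odd}}=0$'' really does kill every $\delta_r$ by parity) and when $H^*(L)$ is supported in just two degrees, as for $S^n$: the only candidate differential is $\delta_r\colon H^n\to H^0$, which requires $n = rN_L - 1$, excluded by $N_L\nmid n+1$. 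But for $N_L\ge 3$ and a Lagrangian whose $\mathbb{Z}_2$-cohomology occupies more than two degrees your argument leaves whole residue classes uncontrolled; for instance with $N_L = 3$ and $L$ a closed $4$-manifold having $H^2=0$ but $H^1,H^3\ne 0$, nothing in your reasoning blocks $\delta_1\colon H^3\to H^1$. What your sketch does establish under condition (3) is that the unit $1\in H^0$ survives (as $\delta_r$ can reach degree $0$ only from degree $rN_L-1\equiv -1$, assumed to vanish), hence $HF(L)\ne 0$ --- but that is strictly weaker than the claimed wideness $HF(L)\cong H(L;\mathbb{Z}_2)\otimes\Lambda$, which needs \emph{all} higher differentials to die. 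Closing this requires an input beyond degree bookkeeping --- duality of the pearl complex, or its module structure over $QH(\Sigma)$, as in~\cite{Bi-Co:rigidity} --- and you should say explicitly which mechanism you invoke and why it kills the remaining differentials.
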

Applying these conditions in each of
Corollaries~\ref{C:subcrit-1},~\ref{C:subcrit-2},~\ref{C:quadric-1}
one can obtain topological restrictions on Lagrangians appearing in
the corresponding $\Sigma$'s.

\subsection{Examples} \label{Sb:examples}

\subsubsection{Lagrangians $L \subset {\mathbb{C}}P^n$ with
  $2H_1(L;\mathbb{Z})=0$}
\label{Sbb:2H1=0}

Take $\Sigma = {\mathbb{C}}P^n$, $M = {\mathbb{C}}P^{n+1}$ and let $L
\subset {\mathbb{C}}P^n$ be a Lagrangian submanifold with $2
H_1(L;\mathbb{Z})=0$. For example, one could take here $L =
{\mathbb{R}}P^n$. It is easy to see that $L$ is monotone. By the
results of~\cite{Bi-Co:rigidity, Bi-Co:Yasha-fest} we have $N_L =
(n+1)$ and moreover:
\begin{enumerate}
  \item $H^*(L;\mathbb{Z}_2) \cong H^*(\mathbb{R}P^n;\mathbb{Z}_2)$,
   i.e. $H^i(L;\mathbb{Z}_2) = \mathbb{Z}_2$ for every $0 \leq i \leq
   n$.
  \item There exists a canonical isomorphism of $\Lambda$--modules
   $HF^*(L) \cong (H(L;\mathbb{Z}_2) \otimes \Lambda)^*$. Note
   however, that the ring structures on these modules are different.
\end{enumerate}
We will see later in~\S\ref{sb:exps-again} that the Floer-Euler class
coincides with the classical Euler class, $e_F = e$, which is the
generator of $H^2(L;\mathbb{Z}_2) = \mathbb{Z}_2$. Note that $e_F=e$
is invertible with respect to the quantum product $*$ on $HF(L)$, but
of course not with respect to the classical cup product $\cup$ on
$H^*(L;\mathbb{Z}_2)$.

\subsubsection{The Clifford torus} \label{Sbb:clif} Let $\Sigma =
{\mathbb{C}}P^n$, $M = {\mathbb{C}}P^{n+1}$ and $L =
\mathbb{T}_{\textnormal{clif}} \subset {\mathbb{C}}P^n$ the Clifford
torus given by $$L = \{ [z_0: \cdots: z_n] \in {\mathbb{C}}P^n \mid
|z_i|=1\, \forall i\}.$$ This is a monotone Lagrangian torus with
minimal Maslov number $N_L = 2$. It is well known that there exists an
isomorphism $HF(L) \cong H(L;\mathbb{Z}_2) \otimes \Lambda$
(See~\cite{Cho:Clifford}, see also~\cite{Bi-Co:rigidity}). Note that
this isomorphism is {\em not canonical} (see~\cite{Bi-Co:rigidity} for
the details), however there exists a canonical injection
$H^0(L;\mathbb{Z}_2) \otimes \Lambda \hookrightarrow HF^*(L)$ sending
the unit of $H^*(L;\mathbb{Z}_2)$ to the unit of $HF^*(L)$.

A simple computation shows that $\Gamma_L \subset {\mathbb{C}}P^{n+1}
\setminus {\mathbb{C}}P^n \approx \textnormal{Int\,} B^{2n+2}(1)$ is in
this case the split monotone torus. As we will see later on, the
Floer-Euler class in this case is $e_F = t \in HF^2(L)$. Note that the
classical Euler class $e \in H^2(L;\mathbb{Z}_2)$ of $\Gamma_L \to L$
vanishes since this bundle is trivial. Thus the classical Gysin
sequence splits into many short exact sequences: $$0 \longrightarrow
H^i(L;\mathbb{Z}_2) \longrightarrow H^i(\Gamma_L;\mathbb{Z}_2)
\longrightarrow H^{i-1}(L;\mathbb{Z}_2) \longrightarrow 0.$$ On the
other hand, since $M \setminus \Sigma = {\mathbb{C}}P^{n+1} \setminus
{\mathbb{C}}P^n$ is subcritical we have $HF(\Gamma_L) = 0$. It follows
that the Floer-Gysin sequence splits into many isomorphisms:
\begin{equation*}
   \begin{CD}
      0 @>>> HF^i(L) @>{* t}>> HF^{i+2}(L) @>>> 0.
   \end{CD}
\end{equation*}
We will work out this example and related ones in more detail
in~\S\ref{sb:exps-again}.

\subsection{Main ideas in the proof of Theorem~\ref{T:exact-seq-1}}
\label{sb:main-ideas}
Our approach to proving Theorem~\ref{T:exact-seq-1} goes via the pearl
complex and Lagrangian quantum cohomology. Recall
from~\cite{Bi-Co:Yasha-fest, Bi-Co:rigidity, Bi-Co:qrel-long} that the
self Floer cohomology $HF(L)$ is canonically isomorphic to the
Lagrangian quantum cohomology $QH(L)$. The latter is the homology of a
cochain complex which is a deformation of the Morse complex of $L$.
The underlying vector space of this complex is the same as that of the
Morse complex, however the differential on the pearl complex is
different. It counts combinations of gradient trajectories with
holomorphic disks attached to them (we call such configurations
``pearly trajectories''). The resulting cohomology has also a ring
structure coming from a quantum product. We briefly recall the
construction of this cohomology theory in~\S\ref{S:hf}. The quantum
cohomology $QH(L)$ together with its ring structure is canonically
isomorphic to $HF(L)$ via an isomorphism called the PSS. The same
holds for $QH(\Gamma_L)$ and $HF(\Gamma_L)$, hence we can replace
everywhere in Theorem~\ref{T:exact-seq-1} $HF^*$ by $QH^*$.

The long exact sequence in Theorem~\ref{T:exact-seq-1} comes in fact
from a short exact sequence of pearl complexes 
\begin{equation*}
   \begin{CD}
      0 @>>> \cplx^*(L) @>{i}>> \cplx^*(\Gamma_L) @>{p}>>
      \cplx^{*-1}(L) @>>> 0
   \end{CD}
\end{equation*}
which is described in detail in~\S\ref{S:short-exact}. Exactness of
this sequence is easy to verify, and the non-trivial part lies in
showing that $i$ and $p$ are cochain maps. This is done by comparing
the pearly trajectories on $\Gamma_L$ with those on $L$. The exactness
of the sequence follows from a correspondence between the $0$ and
$1$-dimensional moduli spaces of pearly trajectories on $L$ and on
$\Gamma_L$.

The correspondence between pearly trajectories on $L$ and on $\Gamma_L$
is done in two main steps. First note that if one removes the
Lagrangian/isotropic skeleton $\Delta$ from $W$ then we have a well
defined projection $W \setminus \Delta \to \Sigma$. Fix an almost
complex structure $J_{\Sigma}$ on $\Sigma$ and Morse data on $L$.
Given a pearly trajectory on $\Gamma_L$ we would like to project it to
$\Sigma$ and obtain a pearly trajectory on $L$. For this to work we
have to use Morse data on $\Gamma_L$ which is adapted to the Morse
data on $L$.  Moreover, in order for the holomorphic disks in the
pearly trajectories to project to holomorphic disks in $L$ we need to
work with almost complex structures $J$ on $W$ that are adapted to
$J_{\Sigma}$ in the sense that the projection is
$(J,J_{\Sigma})$--holomorphic. It is easy to find such $J$'s on $W
\setminus \Delta$ however in general they will not extend to $\Delta$.
Thus we have to allow our $J$'s to be adapted to $J_{\Sigma}$ away
from some small neighborhood $U$ of $\Delta$. We then show that for
small enough $U$, the relevant pearly trajectories on $\Gamma_L$
cannot intersect $U$, hence they all lie in the region of $W$ on which
the projection is holomorphic and so they can be safely projected to
pearly trajectories on $L$. An essential ingredient in the proof of
this fact comes from symplectic field theory (SFT), in particular we
use a neck stretching procedure for this purpose. This is all done
in~\S\ref{S:stretching}.

The second step is to show that pearly trajectories on $L$ can be
lifted to pearly trajectories on $\Gamma_L$. The lifting of the
gradient lines in a pearly trajectory can be done via standard
arguments from Morse theory. The lifting of the holomorphic disks is
done by an elementary argument from classical analysis which allows us
to lift disks with boundary on $L$ to disks in $W$ with boundary on
$\Gamma_L$. The basic construction here amounts to solving the
classical Dirichlet problem for harmonic functions on the
$2$-dimensional disk. This is done in~\S\ref{S:lifting}.

Apart from the above, one has to deal also with transversality issues
for holomorphic disks in $W$. The point is that the set of admissible
almost complex structure $J$ on $W$ is not arbitrary since we need $J$
to be adapted to $J_{\Sigma}$ and moreover have a long enough
``neck''). Thus we cannot choose $J$ to be generic in the usual sense.
Nevertheless we show that by choosing $J_{\Sigma}$ in a generic way
the set of admissible $J$'s on $W$ is large enough to obtain
transversality. This is done in~\S\ref{S:transversality}.

\medskip
\subsection{Organization of the paper}
The rest of the paper is organized as follows.
In~\S\ref{S:lag-s1-constr} we recall the precise construction of the
Lagrangian circle bundle $\Gamma_L \to L$ and recall also some
relevant facts about symplectic hyperplane sections and Weinstein
manifolds. As mentioned above we will use the Lagrangian quantum
cohomology model for Floer homology. The basic setting of this theory
is recalled in~\S\ref{S:hf}. Then in~\S\ref{S:short-exact} we describe
a short exact sequence of pearl complexes that gives rise to the long
exact sequence in cohomology that appears in
Theorem~\ref{T:exact-seq-1}. In~\S\ref{S:stretching} we explain the
stretching of the neck procedure and show how to use it in order
to assure that the relevant pearly trajectories on $\Gamma_L$ can be
indeed safely projected to $L$. The transversality issues are dealt
with in~\S\ref{S:transversality}. \S\ref{S:lifting} is dedicated to
lifting pearly trajectories from $L$ to $\Gamma_L$. Then
in~\S\ref{S:aux-data} we prove that the cohomological exact sequence
is canonical, namely that it does not depend on various choices made
in the construction (such as Morse data and almost complex
structures). In~\S\ref{S:products} we prove the multiplicative
properties of the exact sequence mentioned in
Theorem~\ref{T:exact-seq-1}. In~\S\ref{S:floer-euler} we define the
Floer-Euler class. In~\S\ref{S:positive} we show that the exact
sequence continues to hold also for the positive version of quantum
cohomology.  In~\S\ref{S:floer-euler-more} we give more information on
the Floer-Euler class and its relation to the classical Euler class.
In~\S\ref{S:seq-in-closed} we present a variant of the exact sequence
that holds when one considers $\Gamma_L$ as a Lagrangian submanifold
of $M$ (rather than $W$) and discuss its relation the sequence from
Theorem~\ref{T:exact-seq-1}. Finally, in~\S\ref{S:further} we present
some generalizations of the exact sequence that appear in other
geometric settings and discuss further potential applications.

\medskip
\subsubsection*{Acknowledgments}
We would like to thank Octav Cornea for several useful suggestions
concerning the algebraic structures in the paper as well as the idea
to use almost gradient vector fields which simplified some of our
constructions. Special thanks to Misha Sodin for his help with
Lemma~\ref{S:lift-disks}.

\section{ The Lagrangian circle bundle construction}
\label{S:lag-s1-constr}
Here we recall a construction from~\cite{Bi:Nonintersections,
  Bi-Ci:closed} which associates to a Lagrangian submanifold $L
\subset \Sigma$ a new Lagrangian $\Gamma_L \subset W$.  Before doing
that we briefly go over a few necessary notions such as Weinstein
manifolds and symplectic hyperplane sections that will be used in the
sequel.

\subsection{Weinstein manifolds}
\label{Sb:Weinstein}
A vector field $X$ on a manifold $W$ is called {\em gradient-like} for
a smooth function $\varphi:W \to \mathbb{R}$ if there exists a
positive function $\rho:W \to \mathbb{R}$ and a Riemannian metric on
$W$ such that $d \varphi (X) \geq \rho \|d \varphi \|^2$ everywhere in
$W$ (see~\cite{Gir:convex}). An open symplectic manifold $(W, \omega)$
is called {\em Weinstein} if there exists a primitive $\lambda$ of
$\omega$ such that the dual vector field $X$, defined by $i_X \omega =
\lambda$, is gradient-like with respect to a Lyapunov Morse function
$\varphi : W \to \R$. Moreover, $\varphi$ is assumed to be proper,
bounded below and have finitely many critical points.  Similarly we
have the notion of a {\em Weinstein domain}. By this we mean a compact
symplectic manifold with boundary $(W, \omega)$ such that there exist
$\lambda$ and $\varphi$ as before only that now we assume that
$\varphi: W \to [a,b]$, where $-\infty < a < b < \infty$ and that
$\partial W = \varphi^{-1}(b)$ is a regular level set of $\varphi$.

Weinstein manifolds have special topology. They have the homotopy type
of a CW-complex of dimension $\leq \frac{1}{2} \dim_{\mathbb{R}} W$.
In fact, the function $\varphi$ has the following property: for every
$x \in \textnormal{Crit}(\varphi)$ we have $\textnormal{ind}_x \varphi
\leq \frac{1}{2} \dim_{\mathbb{R}} W$ (see~\cite{El-Gr:convex,
  El:psh}). A Weinstein manifold is called subcritical if there exists
$\lambda$ and $\varphi$ such that for every $x \in
\textnormal{Crit}(\varphi)$ we have a {\em strict} inequality
$\textnormal{ind}_x \varphi < \frac{1}{2} \dim_{\mathbb{R}} W$.

The basic example of a Weinstein manifold is a Stein manifold of
finite type, namely a complex manifold $W$ which admits a proper and
bounded below smooth plurisubharmonic function $\varphi: W \to
\mathbb{R}$ without critical points outside some compact subset.
Clearly we can perturb $\varphi$ with compact support to make it Morse
and still plurisubharmonic. Take $\lambda = -d^{\mathbb{C}} \varphi$.
Since $\varphi$ is plurisubharmonic, $\omega = d \lambda$ is a
symplectic form. Each level set of $\varphi$ is pseudo-convex (away
from the critical points) hence the complex tangency distribution
$\xi$ is contact and clearly we have $\xi = \ker \lambda$ on the level
sets of $\varphi$. A simple computation shows that the contact form
that $\lambda$ induces on each level set of $\varphi$ is positive. The
simplest example of a subcritical (Wein-)Stein manifold is
$W=\mathbb{C}^n$, $\lambda = \frac{i}{2}\sum_{k=1}^n (z_k d \bar{z_k}
- \bar{z_k}dz_k)$ and $\varphi(z_1, \ldots, z_n) = \sum_{k=1}^n
|z_k|^2$.

\subsection{Standard symplectic disk bundles}
\label{Sb:std-disk-bundle} 
Let $(\Sigma, \tau)$ be an integral symplectic manifold, i.e. the de
Rham cohomology class $[\tau]$ has an integral lift in
$H^2(\Sigma;\mathbb{Z})$.  Fix a complex line bundle $\pi: \mathcal{N}
\to \Sigma$ such that $c_1(\mathcal{N})$ is a lift of $\tau$. (We
denote here the symplectic structure on $\Sigma$ by $\tau$, rather
than $\omega_{\Sigma}$, since sometimes we might want to take $\tau$
to be a multiple of $\omega_{\Sigma}$.)  Pick any hermitian metric $|
\cdot |$ on $\mathcal{N}$ and denote by $P_{1} \to \Sigma$ the
associated unit circle bundle. Choose a hermitian connection $\nabla$
on $\mathcal{N}$ with curvature $R^{\nabla} = \frac{i}{2 \pi} \tau$.
Denote $H^{\nabla}$ the horizontal distribution and by
$\alpha^{\nabla}$ the global angular 1-form on $\mathcal{N} \setminus
0$ associated to $\nabla$, i.e.
$$\alpha^{\nabla}|_{H^{\nabla}}=0, \quad \alpha^{\nabla}_{(u)}(u)=0, \quad
\alpha^{\nabla}_{(u)}(iu) = \frac{1}{2 \pi}, \quad \forall u \in
\mathcal{N} \setminus 0.$$ With these conventions we have $d
\alpha^{\nabla} = -\pi^*\tau$. Denote by $r$ the radial coordinate on
the fibres of $\mathcal{N}$ defined by $| \cdot |$.  Define a
symplectic form $\omega_{\textnormal{can}}$ on the total space of
$\mathcal{N}$ by
\begin{equation} \label{Eq:omcan} \omega_{\textnormal{can}} =
   -d(e^{-r^2} \alpha^{\nabla}) = e^{-r^2} \pi^*\tau + 2r e^{-r^2}dr
   \wedge \alpha^{\nabla}.
\end{equation}
The form $\omega_{\textnormal{can}}$ extends smoothly to the
$0$-section of $\mathcal{N}$ and is symplectic. The fibres of
$\mathcal{N}$ are symplectic and they all have area $1$ with respect
to $\omega_{\textnormal{can}}$. Next, note that $\alpha^{\nabla}$ is a
contact form on each of the circle bundles $P_r = \{ u \in \mathcal{N}
\, | \, |u|=r \}$, $r > 0$. Moreover, if we put $\alpha =
\alpha^{\nabla}|_{P_1}$ then $(\mathcal{N} \setminus 0,
\omega_{\textnormal{can}})$ can be naturally identified with the
negative symplectization of $(P_1, \alpha)$. Finally we remark that
the symplectic structure $\omega_{\textnormal{can}}$ is independent,
up to symplectomorphism, of the hermitian metric and the choice of the
connection. We will refer to $\omega_{\textnormal{can}}$ as the
canonical symplectic structure on $\mathcal{N}$ induced by
$(\Sigma,\tau)$.

Denote by $$E_r = \{ u \in \mathcal{N} \,|\, |u| \leq r \}$$ the
(closed) disk bundle of radius $r$ and by $\textnormal{Int\,} E_r = \{
u \in \mathcal{N} \,|\, |u| < r \}$ its interior. We will call $(E_r,
\omega_{\textnormal{can}})$ a {\em standard symplectic disk bundle
  over $(\Sigma, \tau)$}. (Note that the area of the fibres of $E_r$
is $1-e^{-r^2}$.)

\subsection{Symplectic hyperplane sections} \label{Sb:symp-hsection}
Let $(M^{2n+2}, \omega)$ be an integral symplectic manifold, i.e.
$[\omega] \in H^2(M; \mathbb{R})$ admits an integral lift $a \in
H^2(M; \mathbb{Z})$. Fix such a lift $a$. A symplectic hyperplane
section is a codimension-$2$ symplectic submanifold $\Sigma^{2n}
\subset M^{2n+2}$ such that:
\begin{enumerate}
  \item $[\Sigma] \in H_{2n}(M;\mathbb{Z})$ is Poincar\'{e} dual to $k
   a \in H^2(M;\mathbb{Z})$ for some $k \in \mathbb{N}$.
  \item There exists a tubular neighborhood $\mathcal{U}$ of $\Sigma$
   in $M$ whose closure is symplectomorphic to a standard symplectic
   disk bundle $(E_{\epsilon}, \frac{1}{k}\omega_{\textnormal{can}})$
   over $(\Sigma, k \omega|_{\Sigma})$.
  \item $(M \setminus \textnormal{Int\,} E_{\epsilon}, \omega)$ is a
   Weinstein domain.
\end{enumerate}
We will refer to $k$ as the degree of $\Sigma$.
From now one we will denote $\omega_{\Sigma} = \omega|_{\Sigma}$.

The basic examples of symplectic hyperplane sections come from
algebraic geometry. Let $M$ be a projective algebraic manifold and let
$\Sigma \subset M$ be a smooth ample divisor. Let $\omega$ be a
K\"{a}hler form on $M$ representing $c_1$ of the bundle
$\mathcal{O}_M(\Sigma)$. By the results of~\cite{Bi:Barriers}, $\Sigma
\subset M$ is a symplectic hyperplane section. There are also
non-algebraic examples. By a theorem of
Donaldson~\cite{Do:hyperplane}, combined with results of
Giroux~\cite{Gir:ICM2002} every integral symplectic manifold has
symplectic hyperplane sections of any large enough degree $k$.

The following proposition summarizes some relevant facts
from~\cite{Bi:Barriers}.
\begin{prop} \label{P:embedding} Let $(M, \omega)$ be an integral
   symplectic manifold and $\Sigma \subset M$ a symplectic hyperplane
   section of degree $k$. Denote by $\mathcal{N}$ the normal bundle of
   $\Sigma$ in $M$ and let $\omega_{\textnormal{can}}$ be the
   canonical symplectic form on $\mathcal{N}$ induced by $(\Sigma,
   \tau = k \omega_{\Sigma})$.  Then there exists a symplectic
   embedding $F: (\mathcal{N}, \frac{1}{k} \omega_{\textnormal{can}})
   \longrightarrow M$ with the following properties:
   \begin{enumerate}
     \item $F(x,0)=x$ for every $x \in \Sigma$. Here $(x,0) \in
      \mathcal{N}$ stands for the point in the zero section of
      $\mathcal{N}$ corresponding to $x \in \Sigma$.
     \item $\Delta = M \setminus F(\mathcal{N})$ has the structure of
      an isotropic CW-complex with respect to $\omega$.
     \item For every $r>0$, $(M \setminus F(\textnormal{Int\,} E_r),
      \omega)$ is a Weinstein domain.
     \item If the Weinstein manifold $(M \setminus \Sigma, \omega)$ is
      subcritical then $\Delta$ does not contain any Lagrangian cells,
      hence $\dim \Delta < \frac{1}{2} \dim_{\mathbb{R}}M$.
   \end{enumerate}
\end{prop}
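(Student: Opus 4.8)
Since the Proposition collects results of~\cite{Bi:Barriers}, the plan is to recall the geometric construction behind them and explain how the four items follow; essentially all of the work goes into the construction of $F$. Write $Y := (M \setminus \textnormal{Int}\,E_{\epsilon}, \omega)$ for the Weinstein domain supplied by the definition of a symplectic hyperplane section, and let $G \colon (E_{\epsilon}, \tfrac{1}{k}\omega_{\textnormal{can}}) \to M$ be the symplectomorphism onto the closure $\overline{\mathcal{U}}$ of the tubular neighborhood, so that $M = G(E_{\epsilon}) \cup_{G(P_{\epsilon})} Y$ and $\partial Y = G(P_{\epsilon})$. Using the symplectic neighborhood theorem for the symplectic submanifold $\Sigma$, one first normalizes $G$ so that $G(x,0)=x$ for every $x \in \Sigma$; then any symplectic extension $F$ of $G$ automatically satisfies item~(1).

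The heart of the matter is to extend $G$ to a symplectic embedding $F \colon (\mathcal{N}, \tfrac{1}{k}\omega_{\textnormal{can}}) \hookrightarrow M$ whose image is the complement of the isotropic skeleton $\Delta$ of $Y$. The idea is to view the two pieces of $M \setminus \Sigma$ as the two halves of one symplectization. On the bundle side, $(\mathcal{N} \setminus 0, \tfrac{1}{k}\omega_{\textnormal{can}})$ is the negative symplectization of $(P_{1}, \tfrac{1}{k}\alpha)$ (as recalled in~\S\ref{Sb:std-disk-bundle}), with the zero section capping the convex (finite) end $r \to 0$ and $r \to \infty$ giving the concave end; explicitly, the radial reparametrization $\sigma = \epsilon^{2} - r^{2}$ identifies $\mathcal{N} \setminus \textnormal{Int}\,E_{\epsilon}$ with $\bigl( (-\infty, 0] \times P_{\epsilon},\; d(e^{\sigma}\beta_{\textnormal{can}}) \bigr)$, where $\beta_{\textnormal{can}} = -\tfrac{1}{k}e^{-\epsilon^{2}}\alpha^{\nabla}|_{P_{\epsilon}}$, $\ker\beta_{\textnormal{can}} = H^{\nabla}|_{P_{\epsilon}}$, and $\sigma = 0$ is the level $P_{\epsilon}$. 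On the other side, the Liouville flow of the Weinstein domain $Y$ (with Lyapunov function $\varphi$) identifies $Y \setminus \Delta$ with a negative symplectization $\bigl( (-\infty, 0] \times P_{\epsilon},\; d(e^{\sigma}\beta_{Y}) \bigr)$, where $\beta_{Y}$ is the contact form induced on $\partial Y$, the level $\sigma = 0$ is the convex boundary $\partial Y$, and $\sigma \to -\infty$ collapses onto $\Delta$. Both collars restrict to $G$ along $\{\sigma = 0\}$, so once the contact data $\beta_{\textnormal{can}}$ and $\beta_{Y}$ on $P_{\epsilon}$ have been matched near that level, a Moser-type interpolation on a collar of $P_{\epsilon}$ glues the two pieces into a symplectomorphism $\mathcal{N} \cong M \setminus \Delta$ extending $G$. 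This produces $F$, with $M \setminus F(\mathcal{N}) = \Delta$; since $\Delta$ is the isotropic skeleton of $Y$ --- a finite union of isotropic cells whose dimensions equal the indices of the critical points of $\varphi$ --- this gives item~(2).

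Item~(3) follows from the same local model. For $r \le \epsilon$, the set $F(E_{\epsilon} \setminus \textnormal{Int}\,E_{r})$ is, by the radial computation above, a trivial symplectization collar $\bigl( [0, \epsilon^{2}-r^{2}] \times P_{\epsilon},\; d(e^{\sigma}\beta_{\textnormal{can}}) \bigr)$ attached to $Y$ along $\partial Y$, so $M \setminus F(\textnormal{Int}\,E_{r}) = Y \cup (\text{collar})$ is again a Weinstein domain. For $r \ge \epsilon$, under the identification $Y \setminus \Delta \cong (-\infty, 0] \times P_{\epsilon}$ above, $F(\textnormal{Int}\,E_{r}) \cap Y$ is the collar $\bigl( -(r^{2}-\epsilon^{2}), 0 \bigr] \times P_{\epsilon}$ of $\partial Y$, so $M \setminus F(\textnormal{Int}\,E_{r}) = \bigl( (-\infty, -(r^{2}-\epsilon^{2})] \times P_{\epsilon} \bigr) \cup \Delta$ is a Weinstein subdomain of $Y$ with boundary $F(P_{r})$ (perturbing $\varphi$ slightly so that this is a regular level set). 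In every case the skeleton remains $\Delta$. Item~(4) is then immediate: the cells of $\Delta$ have dimension equal to the indices of the critical points of $\varphi$, which are $\le \tfrac{1}{2}\dim_{\mathbb{R}}M$ in general and strictly less when $M \setminus \Sigma$ is subcritical, by the definition of subcriticality; hence $\Delta$ contains no Lagrangian cell and $\dim\Delta < \tfrac{1}{2}\dim_{\mathbb{R}}M$.

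The step I expect to be the main obstacle --- and the one where the real content of~\cite{Bi:Barriers} lies --- is the matching of the contact data on $P_{\epsilon}$ used in the gluing. A priori $\beta_{Y}$, coming from some Weinstein filling of $M \setminus \Sigma$, differs from the prequantization form $\beta_{\textnormal{can}}$ by a closed $1$-form on $P_{\epsilon}$, and two contact forms on $P_{\epsilon}$ need not even define isotopic contact structures. Getting around this forces one to choose the Weinstein structure on $M \setminus \Sigma$ so that it is standard near $\Sigma$ --- i.e.\ coincides there with the symplectization structure of the disk bundle $E_{\epsilon}$ --- which is precisely the normalization built into the neighborhood theorems for symplectic hyperplane sections established in~\cite{Bi:Barriers} (compare also~\cite{Bi-Ci:closed}). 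Granting that normalization, the gluing --- and with it all of~(1)--(4) --- goes through as sketched.
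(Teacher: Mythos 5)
The paper does not actually prove this proposition; it cites~\cite{Bi:Barriers} for all four items and adds only the remark that the $K$\"ahler hypothesis used there can be dropped because the definition of ``symplectic hyperplane section'' in~\S\ref{Sb:symp-hsection} already builds in that $M \setminus \textnormal{Int}\,E_\epsilon$ is Weinstein. So the comparison is really against the cited construction, and your sketch is a faithful reconstruction of it. You have the structure right: split $M$ along $P_\epsilon$ into the disk bundle model $E_\epsilon$ and the Weinstein piece $Y$, identify $\mathcal{N} \setminus \textnormal{Int}\,E_\epsilon$ with $((-\infty,0]\times P_\epsilon, d(e^\sigma\beta_{\textnormal{can}}))$ via $\sigma = \epsilon^2 - r^2$ (this radial change of variable is computed correctly from~\eqref{Eq:omcan}), identify $Y \setminus \Delta$ with $((-\infty,0]\times \partial Y, d(e^\sigma\beta_Y))$ via the backward Liouville flow, and glue. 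Your dimension count for~(4) is also correct: the skeleton cells are stable manifolds of the Liouville field, of dimension equal to the Morse index, hence $\le \tfrac12\dim_{\R}M$ in general and strictly less in the subcritical case.

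Two small points worth tightening, though neither is a genuine gap. First, the Liouville flow gives a symplectomorphism $(-\infty,0]\times\partial Y \cong Y \setminus \Delta$ only after noting that every non-skeleton point exits $Y$ in finite forward Liouville time; this is standard for a Weinstein domain but deserves a word. Second, you are right that the real content is in matching the contact data on $P_\epsilon$, and right that the fix is to arrange the Weinstein structure on $Y$ to be cylindrical near $\partial Y$ with Liouville form equal to $e^\sigma\beta_{\textnormal{can}}$ there. But since the definition of symplectic hyperplane section used in this paper only asserts that $(M\setminus\textnormal{Int}\,E_\epsilon,\omega)$ \emph{is} a Weinstein domain and does not fix a particular Weinstein structure, one should say explicitly that a Weinstein structure can always be deformed, relative to $\omega$, so as to be standard near the boundary; both contact forms induce the same contact structure on $P_\epsilon$ (it is determined by $\omega|_{P_\epsilon}$ together with a co-orientation), so such a deformation is available. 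That normalization, not a Moser interpolation across the seam, is where~\cite{Bi:Barriers} does the work — your closing paragraph says essentially this, so the sketch stands.
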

Note that in~\cite{Bi:Barriers} these statements were proved under the
additional assumption that $(M, \omega)$ is K\"{a}hler, however they
easily extend to the non-K\"{a}hler case due to the definition of the
notion ``symplectic hyperplane section'' we gave
in~\S\ref{Sb:symp-hsection} above. The point is that our definition of
``symplectic hyperplane section'' assumes that the complement of
tubular neighborhood of $\Sigma$ is Weinstein. It is a rather
non-trivial theorem (which we will not use) that for large enough $k$
the symplectic submanifolds provided by Donaldson's
theorem~\cite{Do:hyperplane} are indeed hyperplane sections (in the
sense that their complements are Weinstein). See~\cite{Gir:ICM2002}
for more on that.

\subsection{Lagrangian circle bundles} \label{Sb:lag-s1} Let
$(M^{2n+2}, \omega)$ be an integral symplectic manifold and $\Sigma
\subset M$ a hyperplane section of degree $k$. Let $L^n \subset
\Sigma^{2n}$ be a Lagrangian submanifold. Let $\pi: \mathcal{N} \to
\Sigma$ be the normal bundle of $\Sigma$ in $M$ and
$\omega_{\textnormal{can}}$ the canonical symplectic structure induced
by $(\Sigma, \tau = k \omega_{\Sigma})$. Pick an arbitrary radius
$r_0$ and let $P_{r_0} \subset \mathcal{N}$ be the associated circle
bundle of radius $r_0$ and $\pi_{r_0}: P_{r_0} \to \Sigma$ the
projection.  Define
$$\Gamma_L = \pi_{r_0}^{-1}(L)$$ to be the restriction of this
bundle to $L$. A simple computation shows that $\Gamma_L^{n+1}$ is a
Lagrangian submanifold of $(\mathcal{N}, \omega_{\textnormal{can}})$.
Using the embedding $F: (\mathcal{N},
\frac{1}{k}\omega_{\textnormal{can}}) \longrightarrow (M, \omega)$
coming from Proposition~\ref{P:embedding} we obtain a Lagrangian
submanifold $F(\Gamma_L) \subset M \setminus \Sigma$ which in fact
lies on the boundary of the Weinstein domain $M \setminus
F(\textnormal{Int\,} E_{r_0})$.  Because of that we will identify from
now on $\Gamma_L$ with $F(\Gamma_L)$ and view $\Gamma_L$ as a
Lagrangian submanifold of $W = M \setminus \Sigma$. We call $\Gamma_L$
the {\em Lagrangian circle bundle over $L$}.

\begin{rem} \label{R:r0} Clearly $\Gamma_L$ depends on the choice of
   $r_0$. Although different choices lead to Lagrangian isotopic
   $\Gamma_L$'s, they are not Hamiltonianly isotopic. Nevertheless the
   $\Gamma_L$'s corresponding to different $r_0$'s are conformally
   symplectic equivalent in $W$. In particular, if $\Gamma_L$ is
   monotone for some $r_0$ it will continue to be so for every choice
   of $r_0$ and the minimal Maslov number is not affected by this
   choice. Moreover, the Floer homology, $HF(\Gamma_L)$, of $\Gamma_L$
   in $W$ (whenever it is well defined) does not depend on the choice
   of $r_0$. For this reason we will ignore the dependence on $r_0$,
   keeping in mind that everything we prove for $\Gamma_L \subset W$
   holds for any choice of $r_0$.  This however has one exception:
   later on in~\S\ref{S:seq-in-closed} we will also view $\Gamma_L$ as
   a Lagrangian submanifold of $(M, \omega)$. We will see that in that
   case, when $L$ is monotone, there is precisely one choice of $r_0$
   which will make $\Gamma_L \subset M$ monotone too.
\end{rem}

Using the embedding $F$ from Proposition~\ref{P:embedding} we will
often make the identification $F: \mathcal{N} \setminus \Sigma \to W
\setminus \Delta$. Translating the projection $\mathcal{N} \to \Sigma$
via this identification we obtain a projection $$\pi: (W \setminus
\Delta, \Gamma_L) \to (\Sigma, L).$$ Since $(P_{r_0}, \Gamma_L) \to
(\Sigma, L)$ is a fibration it is easy to see that
\begin{equation} \label{eq:pi_*}
   \pi_*: \pi_2(W \setminus \Delta, \Gamma_L) \to (\Sigma, L) \; \; 
   \textnormal{is an isomorphism}.
\end{equation}
Denote by $\iota: W \setminus \Delta \to W$ the inclusion. The
following proposition relates the monotonicity of $L$ to that of
$\Gamma_L$. For a Lagrangian submanifold $K$ of a symplectic manifold
$(V, \omega)$ we denote by $\mu_K:\pi_2(V,K) \to \mathbb{Z}$ the
Maslov index and by $N_K$ the minimal Maslov number
(see~\cite{Bi-Co:rigidity}).

\begin{prop}[See~\cite{Bi:Nonintersections}]
   \label{P:monotone-gl}
   Assume that either $\dim_{\mathbb{R}} \Sigma \geq 4$, or that
   $\dim_{\mathbb{R}} \Sigma = 2$ and $W = M \setminus \Sigma$ is
   subcritical. Then:
   \begin{enumerate}
     \item The homomorphism $\iota_*: \pi_2(W \setminus \Delta,
      \Gamma_L) \to \pi_2(W, \Gamma_L)$, induced by the inclusion, is
      surjective. When $\dim_{\mathbb{R}} \Sigma \geq 6$, $\iota_*$ is
      an isomorphism. The same statement holds also for homology,
      i.e.  if one replaces $\pi_2$ by $H_2$.
     \item For every $B \in \pi_2 (W \setminus \Delta, \Gamma_L)$ we
      have:
      $$\mu_{\Gamma_L}(B) = \mu_L(\pi_*(B)).$$
   \end{enumerate}
   In particular, if $L \subset \Sigma$ is monotone then $\Gamma_L
   \subset W$ is monotone too, and $N_{\Gamma_L} = N_L$.
\end{prop}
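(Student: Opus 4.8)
The plan is to reduce everything to the statement about the Maslov index, part (2), which is the geometric heart of the matter; parts (1) and the monotonicity conclusion then follow by elementary bookkeeping. I would begin with the homotopy-theoretic part (1). The key point is the codimension of the skeleton $\Delta$: since $W$ is Weinstein, $\Delta$ has real dimension $\leq \frac12 \dim_{\mathbb R} W = n+1$, and when $\dim_{\mathbb R}\Sigma \geq 4$ (i.e.\ $\dim_{\mathbb R} W \geq 6$) a general position argument shows that a disk $(D^2,\partial D^2) \to (W,\Gamma_L)$ can be pushed off $\Delta$, giving surjectivity of $\iota_*$ on $\pi_2$; when $\dim_{\mathbb R}\Sigma \geq 6$ the complement $W\setminus\Delta$ has codimension $\geq 4$, so one gets injectivity as well and $\iota_*$ is an isomorphism. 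In the remaining case $\dim_{\mathbb R}\Sigma = 2$ with $W$ subcritical, Proposition~\ref{P:embedding}(4) gives $\dim\Delta < \frac12\dim_{\mathbb R}M = n+1 = 2$, so $\Delta$ is a graph (dimension $\leq 1$), and again disks and their homotopies can be made disjoint from it. The same transversality reasoning applies verbatim to $H_2$ in place of $\pi_2$.

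For part (2), the strategy is to compute both Maslov indices through the projection $\pi\colon (W\setminus\Delta,\Gamma_L)\to(\Sigma,L)$ together with the fibration structure of $\Gamma_L\to L$. Recall from \eqref{eq:pi_*} that $\pi_*\colon \pi_2(W\setminus\Delta,\Gamma_L)\to\pi_2(\Sigma,L)$ is an isomorphism, so every class $B$ is determined by $\pi_*(B)$. The natural bundle-theoretic computation is to split the symplectic normal data of $\Gamma_L$ in $W\setminus\Delta$: over any disk $u\colon(D^2,\partial D^2)\to(W\setminus\Delta,\Gamma_L)$ with $\bar u = \pi\circ u$ the pulled-back bundle pair $(u^*TW, u^*T\Gamma_L)$ decomposes, up to homotopy, as the pullback of $(\bar u^*T\Sigma, \bar u^*TL)$ direct sum the pullback of the ``vertical'' data, namely the normal bundle $\mathcal N$ of $\Sigma$ together with the tangent line to the circle fibre inside it. The Maslov index is additive under such a splitting of bundle pairs, so $\mu_{\Gamma_L}(B) = \mu_L(\pi_*B) + (\text{contribution of the vertical pair})$. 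The vertical contribution is the relative Chern number (or Maslov index) of the pair consisting of the complex line bundle $\mathcal N$ restricted to the disk and the totally real line field given by the real tangent direction to the circle of radius $r_0$ inside each fibre. Because the circle sits at fixed radius in a complex line, this totally real line is the ``radius-$r_0$ real locus'' in $\mathcal N$, whose Maslov index over a disk equals twice the first Chern number of $\mathcal N$ pulled back over that disk — but that disk, having boundary on the section $\Gamma_L$ and interior in $W\setminus\Delta\approx\mathcal N\setminus\Sigma$, never meets the zero section, so $\mathcal N$ is trivialized over it and this contribution vanishes. Hence $\mu_{\Gamma_L}(B)=\mu_L(\pi_*B)$.

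I expect the main obstacle to be making the bundle-pair splitting over the disk precise and canonical — in particular, checking that the relevant trivialization of the pulled-back normal bundle $\mathcal N$ is the ``obvious'' one (coming from the fact that a disk with boundary on the circle bundle and interior in the complement avoids the zero section), and that the resulting identification is compatible with the angular/radial decomposition \eqref{Eq:omcan} of $\omega_{\mathrm{can}}$ near the circle. One must be careful that the homotopy of bundle pairs used to split off the vertical factor respects the Lagrangian/totally-real boundary conditions throughout. This is where the explicit formula $\omega_{\mathrm{can}} = e^{-r^2}\pi^*\tau + 2re^{-r^2}\,dr\wedge\alpha^{\nabla}$ and the fact that the fibres of $\mathcal N$ are symplectic are used: they let one identify the symplectic normal bundle to $\Gamma_L$ along the fibre direction with the (trivial over the disk) pullback of $\mathcal N$. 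Once part (2) is in place, the final assertion is immediate: if $L$ is monotone with monotonicity constant $\lambda$, then for $B\in\pi_2(W\setminus\Delta,\Gamma_L)$ we have $\omega_{\mathrm{can}}(B)=\frac1k\,\tau(\pi_*B)=\omega_\Sigma(\pi_*B)$ (up to the overall scaling by $F$) while $\mu_{\Gamma_L}(B)=\mu_L(\pi_*B)$, so the same proportionality $\omega = \lambda\mu$ holds on the image of $\pi_2(W\setminus\Delta,\Gamma_L)$; by part (1) this image surjects onto $\pi_2(W,\Gamma_L)$, so $\Gamma_L\subset W$ is monotone with the same constant, and since $\pi_*$ and the area/index identities are compatible with the subgroup generated by Maslov-minimal classes, $N_{\Gamma_L}=N_L$.
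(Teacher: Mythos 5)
Your argument is correct and follows the same route the paper takes: part~(1) by general position against the skeleton $\Delta$ using $\dim\Delta\le\frac12\dim_{\mathbb R}M$, and part~(2) by splitting the bundle pair $(u^*TW,u^*T\Gamma_L)$ into horizontal and vertical pieces, with the vertical Maslov contribution killed because the disk itself furnishes a nowhere-vanishing section of $\bar u^*\mathcal N$ and hence a trivialization in which the circle-tangent boundary condition is constant (this is the content of the cited Proposition~4.1.A of~\cite{Bi:Nonintersections}, which the paper simply quotes). Two small imprecisions worth fixing: the phrase ``twice the first Chern number of $\mathcal N$ pulled back over the disk'' is not a well-defined invariant by itself---what matters is the winding of the totally real line field in the trivialization provided by $u$, which is what your next clause actually uses; and the area identity should read $\omega_{\mathrm{can}}(B)=e^{-r_0^2}\,\tau(\pi_*B)$ rather than $\tau(\pi_*B)/k$, though the extra positive constant of course does not affect the monotonicity conclusion.
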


\begin{proof}
   The first statement follows easily from the fact that $\dim \Delta
   \leq \tfrac{1}{2} \dim_{\mathbb{R}} M$. The second statement is
   proved in~\cite{Bi:Nonintersections} (see Proposition 4.1.A there).
\end{proof}

Clearly given $B \in \pi_2(W,\Gamma_L)$, any class $B' \in
\pi_2(W\setminus \Delta, \Gamma_L)$ with $\iota_*(B') = B$ will have
the same Maslov index as $B$. Therefore even when $\iota_*$ is not an
isomorphism, we can always reduce the calculation of the Maslov index
in $(W,\Gamma_L)$ to $(W \setminus \Delta, \Gamma_L)$. This in turn
can be reduced to computing the Maslov index in $(\Sigma, L)$.  In
fact, as we will see later, the holomorphic disks that will be
relevant for computing the quantum cohomology of $\Gamma_L \subset W$
all lie in $W \setminus \Delta$.

\subsection{A small simplification of the setting}
\label{sb:simplification} Recall that $\Sigma \subset M$ is assumed to
be a symplectic hyperplane section in $M$, hence $PD[\Sigma] =
k[\omega]$ for some $k \in \mathbb{N}$. Rescaling the symplectic
structure $\omega$ by $k$ we may assume from now on that $PD[\Sigma] =
[\omega]$. By doing so we can assume without loss of generality that
$k=1$ and can get rid of the $k$ and $\tfrac{1}{k}$ factors that
appear in many formulas earlier in this section (e.g. in
Proposition~\ref{P:embedding}). Clearly, this will not change anything
related to the Floer cohomologies of neither $L$ nor $\Gamma_L$.

\section{Lagrangian quantum cohomology versus Floer cohomology}
\label{S:hf}

In what follows we will use the pearl complex described
in~\cite{Bi-Co:qrel-long, Bi-Co:rigidity, Bi-Co:Yasha-fest}. We refer
the reader to these papers for the precise construction of the theory.
Below we briefly recall the main definitions and setup the notation.

Let $(V, \omega)$ be a tamed symplectic manifold, $K \subset V$ a
monotone Lagrangian with minimal Maslov class $N_K \geq 2$. Since
Maslov indices come in multiples of $N_K$ we will often use the
following normalized version of the Maslov index:
$$\overline{\mu}_K = \tfrac{1}{N_K} \mu_K : 
\pi_2(V,K) \longrightarrow \mathbb{Z}.$$ We will sometimes omit the
subscript $K$ from $\mu_K$ and $\overline{\mu}_K$ when the Lagrangian
$K$ in question is obvious. Also, we will sometime prefer to work with
homology, namely $H_2(V,K)$ instead of $\pi_2(V,K)$. This will not
pose any difficulties since the Maslov index $\mu_K$ can be defined in
a compatible way also as a homomorphism $H_2(V,K) \to \mathbb{Z}$.

Put $\Lambda = \Z_2[t^{-1}, t]$ which is graded by $|t| = N_K$.  Let
$\data = (f, (\cdot, \cdot), J)$ denote a choice of auxiliary data,
where $f:K \to \R$ is a Morse function, $(\cdot, \cdot)$ is a
Riemannian metric on $L$ and $J$ an almost complex structure tamed by
$\omega$.  The pearl complex associated to $\data$ is
\[
\cplx(\data) = \Z_2 \langle \Crit f \rangle \otimes \Lambda ,
\]
where the critical points are graded by Morse index and the total
grading comes from both factors. The complex is endowed with the
differential
\[
\diff: \cplx^* (\data) \to \cplx^{*+1} (\data)
\]
whose definition we briefly recall now.  Denote by $\Phi_t : K \to K$
the negative gradient flow of $f$. Let $x, y \in \Crit f$ and denote
by $W_x^u$ and $W_y^s$ the unstable and stable submanifolds of the
critical points $x$ and $y$ respectively, with respect to negative
gradient flow of $f$. Let $\mathbf{A} = (A_1, \ldots, A_l)$ be a
vector of non-zero homology classes $A_i \in H_2 (V, K)$.

Define $\pearlspace (x, y, \mathbf{A};\data)$ to be the space of
tuples $(u_1, t_1, \ldots, u_{l-1}, t_{l-1}, u_l)$ where $t_i \in (0,
\infty)$, $u_i : (\disk, \partial \disk) \to (V, K)$ are
$J$-holomorphic disks in the class $A_i$ and we have the following
incidence relations:
\begin{equation}
   \begin{cases}
      & \Phi_{t_i} (u_i (1)) = u_{i+1} (-1)
      \quad \text{for} \;\; 1 \leq i \leq l-1 \\
      & u_1(-1) \in W^u_x \\
      & u_l(1) \in W^s_y .
   \end{cases}
\end{equation}
Moreover, in this definition the definition each of the holomorphic
disks $u_i$ is taken modulo the reparametrization subgroup of $Aut
(\disk)$ consisting of those elements that fix the points $\{1, -1\}$.
Finally, we allow $\mathbf{A}$ to consist of the zero class and define
in this case $\pearlspace (x, y, 0; \data) = \left(W^s_y \cap W^u_x
\right) / \R$. We call elements of $\mathcal{P}(x,y, \mathbf{A};
\data)$ {\em pearly trajectories}.

The space of pearly trajectories $\pearlspace (x, y,
\mathbf{A};\data)$ has virtual dimension
\begin{equation}
   \delta (x, y, \mathbf{A}) = |x| - |y| + \mu(\mathbf{A}) - 1 
\end{equation}
where $\mu(\mathbf{A}) = \sum_i \mu (A_i)$. We will also say that
trajectories $\gamma \in \pearlspace (x, y, \mathbf{A};\data)$
have {\em index} $\delta(\gamma): = \delta(x,y,\mathbf{A})$. By the
results of~\cite{Bi-Co:qrel-long}, for generic choices of $\data$ the
space of pearly trajectories has the following properties. When
$\delta = \delta (x, y, \mathbf{A}) \leq 1$, the space $\pearlspace
(x, y, \mathbf{A};\data)$ is a smooth manifold of dimension $\delta$.
Moreover, when $\delta = 0$, this manifold is compact, hence consists
of finitely many points.  Further regularity properties of these
spaces are described in~\cite{Bi-Co:qrel-long, Bi-Co:rigidity,
  Bi-Co:Yasha-fest}.

We define 
\begin{equation}
   \diff y = \sum_{x, \mathbf{A}} \# 
   \pearlspace (x, y, \mathbf{A};\data) \cdot x 
   \, t^{\overline{\mu}(\mathbf{A})},
\end{equation}
where the sum is taken over all pairs $x \in \Crit f$ and vectors
$\mathbf{A}$ (including $\mathbf{A}=0$) such that $\delta (x, y,
\mathbf{A}) = 0$. The count $\# \pearlspace (x, y, \mathbf{A};\data)$
is done in $\mathbb{Z}_2$.

It is proved in~\cite{Bi-Co:qrel-long} that $d^2 = 0$ and that the
cohomology of this complex $H^* (\cplx(\data), \diff)$ is independent
of the choices of the generic triple $\data$
(see~\cite{Bi-Co:qrel-long, Bi-Co:rigidity, Bi-Co:Yasha-fest} for more
details). This cohomology is called the {\em quantum cohomology} of
$K$ and denoted by $QH^*(K)$. (Sometime we will also call it the
``pearl cohomology of $K$''.) Note that $QH(K)$ has additional
structures such as a product $*$ which turns it into an associative
unital ring (see~\S\ref{S:products}).

\subsection{Negative almost gradient vector fields}
\label{sb:neg-almost-grad} In what follows we will sometimes use also
the following slight variation on the pearl complex construction.  Let
$f : K \to \R$ be a Morse function and $Y$ a vector field on $K$. We
call the pair $(f, Y)$ {\em negative almost gradient} if
\begin{enumerate}
  \item $(-f)$ is a Lyapunov function for $Y$, i.e. $df(Y) < 0$ away
   from the critical points of $f$.
  \item 
  	For every critical point $x \in \Crit f$ there
   	exists a neighborhood $\mathcal{U} \subset K$ of $x$ and a
   	Riemannian metric $\rho$ on $\mathcal{U}$ such that in
   	$\mathcal{U}$, $Y = - \textnormal{grad}_{\rho}f$.
\end{enumerate}
Sometimes, instead of working with triples $\data = (f, (\cdot,
\cdot), J)$ we will work with $\data = (f, Y, J)$ and replace the
negative gradient flow $\Phi_t$ in the definition of $\pearlspace$ by
the flow of the vector field $Y$, which we continue to denote
$\Phi_t$. The theory of Lagrangian quantum cohomology remains
unchanged in this setting in the sense that the resulting cohomology
is canonically isomorphic to $QH^*(K)$.

\subsection{Relation to Floer homology} \label{sb:pss} The quantum
cohomology $QH(K)$ of a monotone Lagrangian $K$ has the following
important property: it is canonically isomorphic to the self Floer
cohomology $HF(K):=HF(K,K)$ via a well-known isomorphism commonly
called PSS (see~\cite{Bi-Co:qrel-long, Bi-Co:rigidity}). Moreover,
this isomorphism identifies the quantum product on $QH(K)$ with the
corresponding product on $HF(K,K)$ defined by counting holomorphic
triangles. In view of this, from now on we will replace the Floer
cohomologies that appear in Theorem~\ref{T:exact-seq-1} by the quantum
cohomologies $QH(L)$ and $QH(\Gamma_L)$.

\section{A short exact sequence of pearly chain complexes}
\label{S:short-exact}

In this section we construct a short exact sequence of Floer cochain
complexes that gives rise the the long exact sequence of
Theorem~\ref{T:exact-seq-1}.

\subsection{Setting} \label{Sb:setting} Let $\Sigma \subset M$ be a
symplectic hyperplane section and $L \subset \Sigma$ a monotone
Lagrangian submanifold with minimal Maslov number $N_L \geq 2$. Fix
once and for all $r_0 > 0$ and put
$$P = P_{r_0} = \{ u \in \mathcal{N} \mid |u| = r_0 \}.$$
Using the symplectic embedding of Proposition~\ref{P:embedding} we can
view $P$ also as a subset of $W = M \setminus \Sigma$.  Let $\Gamma_L
\subset P$ be the Lagrangian circle bundle associated to $L \subset
\Sigma$. We denote by $\pi: \mathcal{N} \to \Sigma$, $\pi_P = \pi|_P:
P \to \Sigma$, $\pi_{\Gamma_L} = \pi|_{\Gamma_L} : \Gamma_L \to L$ the
projections.  Choose a connection $\nabla$ as
in~\S\ref{Sb:std-disk-bundle} and denote by $H_P^{\nabla} \subset
T(P)$ the horizontal distribution corresponding to it in $P$.

Let $f:L \to \mathbb{R}$ be a Morse function and $(\cdot, \cdot)$ a
Riemannian metric on $L$. Put $X = -\textnormal{grad} f$. Let
$X^{\textnormal{hor}}$ be the horizontal lift of $X$ to $\Gamma_L$
using $H_P^{\nabla}$. We will now modify $X^{\textnormal{hor}}$ into a
``negative almost gradient'' vector field on $\Gamma_L$ with respect
to some Morse function.

Denote by $x_1, \ldots, x_m$ the critical points of $f$. Choose a
small chart $\mathcal{U}_i$ around each $x_i$ and a trivialization
$\tau_i : \mathcal{U}_i \times S^1 \to \Gamma_L|_{\mathcal{U}_i}$.
Next, choose for every $i$ a Morse function $h_i: S^1 \to \mathbb{R}$
with exactly two critical points $p'_i$ and $p''_i$ of indices $0$ and
$1$ respectively. Let $Y_i = -\textnormal{grad}\, h_i$ with respect to
the standard metric on $S^1$.  Extend $Y_i$ to a vector field on
$\mathcal{U}_i \times S^1$ in a vertical way, i.e. by setting its
component in the $\mathcal{U}_i$ direction to be $0$. The resulting
field will be still denoted by $Y_i$.

Finally, for every $i$ choose a smooth cutoff function $\alpha_i: L
\to [0,1]$ with the following properties: there exist two
neighborhoods $\mathcal{V}_i \subset \mathcal{W}_i \subset
\mathcal{U}_i$ of $x_i$ with $\overline{\mathcal{V}}_i \subset
\mathcal{W}_i$, and $\overline{\mathcal{W}}_i \subset \mathcal{U}_i$
such that $\alpha_i \equiv 1$ on $\mathcal{V}_i$ and $\alpha_i \equiv
0$ outside $\mathcal{W}_i$.  Fix $\varepsilon > 0$. We define a vector
field $X_{\varepsilon}$ on $\Gamma_L$ by:
\begin{equation} \label{Eq:X-eps} X_{\varepsilon} =
   X^{\textnormal{hor}} + \varepsilon \sum_{i=1}^m (\alpha_i \circ
   \pi_{\Gamma_L}) d\tau_i(Y_i).
\end{equation}
It is easy to see that for $\varepsilon >0$ small enough this vector
field is ``negative almost gradient'' for the following Lyapunov
function on $\Gamma_L$:
$$f_{\varepsilon} = f \circ \pi_{\Gamma_L} + \varepsilon \sum_{i=1}^m
(\alpha_i \circ \pi_{\Gamma_L}) h_i \circ \tau_i^{-1}.$$

Note that outside of the neighborhoods $\mathcal{U}_i$ we have
$f_{\varepsilon} = \pi_{\Gamma_L}^* f$ and therefore all critical
points of $f_{\varepsilon}$ are contained in $\bigcup \mathcal{U}_i$.
Using the trivializations $\tau_i$ one can see that all of them lie in
fibers of critical points of $f$. Moreover, to any $x_i \in \Crit f$
there are exactly two critical points $x_i'$, $x_i''$ with
$\tau_i^{-1}(x'_i) = (x_i, p'_i)$ and $\tau_i^{-1}(x''_i) = (x_i,
p''_i)$. The indices of these critical points are given by $|x'_i| =
|x_i|$ and $|x''_i| = |x_i|+1$.


We now turn to the almost complex structures that will be used in the
pearl complexes of $L$ and $\Gamma_L$. We first choose a generic tame
almost complex structure $J_\Sigma$ on $\Sigma$.  Then, once
$J_\Sigma$ is fixed, we restrict to a class of almost complex
structures $J$ on $M$ which we call {\em admissible}. The precise
definition is given in~\S\ref{S:stretching}.  Here is a rough
description: identify the complement of the skeleton $\Delta$ with
$\mathcal{N}$ via proposition ~\ref{P:embedding}. We require that the
projection $\pi : \mathcal{N} \to \Sigma$ is $(J,
J_\Sigma)$-holomorphic outside a small neighborhood $U$ of $\Delta$.
In addition, $(\mathcal{N}, \omega, J)$ is assumed to have a long
enough ``neck'' in the sense of ``stretching of the neck'' procedure.
The precise definitions are given in~\S\ref{S:stretching}.

Put $\data = (f, \rho, J_\Sigma)$ and $\widetilde{\data}_\varepsilon =
(f_\varepsilon, X_\varepsilon, J)$. We now define maps $$i :
\cplx^*(L; \data) \to \cplx^*(\Gamma_L;
\widetilde{\data}_\varepsilon), \quad p : \cplx^*(\Gamma_L;
\widetilde{\data}_\varepsilon) \to \cplx^{*-1}(L; \data)$$ as follows.
Let $0 \leq k \leq n$, and denote by $\textnormal{Crit}_k(f)$ the set
of critical points of $f$ of index $k$. Define $i$ by:
$$i(x) = x' \;\; \forall x \in \textnormal{Crit}_k(f).$$
To define $p$ note that $\textnormal{Crit}_k(f_{\varepsilon}) =
(\textnormal{Crit}_{k}(f))' \cup (\textnormal{Crit}_{k-1}(f))''$. Define:
$$p(x') = 0 \;\; \forall x \in \textnormal{Crit}_k(f), 
\quad \textnormal{and} \;\;\; p(y'') = y, \;\; \forall y \in
\textnormal{Crit}_{k-1}(f).$$ We extend $i$ and $p$ linearly over
$\Lambda$ to the whole of $\cplx^*(L; \data)$ and $\cplx^*(\Gamma_L;
\widetilde{\data}_\varepsilon)$.

The main statement of Theorem~\ref{T:exact-seq-1} can be reformulated
as follows: let $M, \, \Sigma, \, L$ be as described above.
\begin{thm} \label{T:chain-map-1} Assume that either
   $\dim_{\mathbb{R}} \Sigma \geq 4$ or $W$ is subcritical. For a
   generic choice of auxiliary data $\data$ described above and for an
   admissible $J$ the pearl complexes $\cplx^*(L; \data)$ and
   $\cplx^*(\Gamma_L; \widetilde{\data}_\varepsilon)$ are well defined
   and their cohomologies compute the quantum cohomologies $QH(L)$ and
   $QH(\Gamma_L)$ respectively.  The maps $i$ and $p$ are cochain maps
   and they form a short exact sequence:
  \begin{equation*}
      \begin{CD}
         0 @>>> \cplx^*(L; \data) @>{i}>> \cplx^*(\Gamma_L;
         \widetilde{\data}_\varepsilon) @>{p}>> \cplx^{*-1}(L; \data)
         @>>> 0
      \end{CD}
  \end{equation*}
  of cochain complexes. In particular, we have a long exact sequence
  in cohomology:
  \begin{equation*}
      \begin{CD}
         \cdots @>>> QH^{k}(L) @>{\delta}>> QH^{k+2}(L) @>{i}>>
         QH^{k+2}(\Gamma_L) @>{p}>> QH^{k+1}(L) @>{\delta}>> \cdots
      \end{CD}
  \end{equation*}
  The cohomological long exact sequence is canonical in the sense that
  it does not depend on the auxiliary data. The connecting
  homomorphism $\delta: QH^{*}(L) \to QH^{*+2}(L)$ is given by quantum
  multiplication with a class $e_F \in QH^2(L)$. Moreover, the maps
  induced by $i$ and $p$ in cohomology (which we continue to denote by
  $i$ and $p$) are compatible with the quantum products in the
  following sense:
   \begin{equation} \label{eq:multip-i-p-2} i(\alpha*\beta) =
      i(\alpha)*i(\beta), \quad p(\widetilde{\alpha}*i(\beta)) =
      p(\widetilde{\alpha})*\beta, \quad
      p(i(\alpha)*\widetilde{\beta}) = \alpha*p(\widetilde{\beta}),
   \end{equation}
   for every $\alpha, \beta \in QH^*(L)$ and $\widetilde{\alpha},
   \widetilde{\beta} \in QH^*(\Gamma_L)$.
\end{thm}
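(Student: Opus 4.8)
The plan is to break the statement into four essentially independent parts and treat them in order: (a) well-definedness of the two pearl complexes $\cplx^*(L;\data)$ and $\cplx^*(\Gamma_L;\widetilde{\data}_\varepsilon)$ together with the identification of their cohomologies with $QH(L)$ and $QH(\Gamma_L)$; (b) the fact that $i$ and $p$ are cochain maps; (c) the algebraic fact that $0 \to \cplx^*(L) \xrightarrow{i} \cplx^*(\Gamma_L) \xrightarrow{p} \cplx^{*-1}(L) \to 0$ is short exact, and the resulting long exact sequence; (d) canonicity, the identification of the connecting map with $*\,e_F$, and the multiplicativity~\eqref{eq:multip-i-p-2}. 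For (a): the complex on $L$ is the standard pearl complex of~\cite{Bi-Co:qrel-long} for generic $\data$, so nothing new is needed there. For $\Gamma_L$, since $\dim_{\mathbb R}\Sigma \ge 4$ or $W$ is subcritical, Proposition~\ref{P:monotone-gl} gives that $\Gamma_L \subset W$ is monotone with $N_{\Gamma_L}=N_L$, so a pearl complex is defined; the point to check is that the pair $(f_\varepsilon, X_\varepsilon)$ is negative almost gradient (already noted in~\S\ref{Sb:setting} for $\varepsilon$ small) and that, for \emph{admissible} $J$, enough transversality holds — this is deferred to~\S\ref{S:transversality}, which I would invoke as a black box here.

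For (b), the heart of the matter: I would show that a pearly trajectory counted by $\diff$ on $\Gamma_L$ projects, via $\pi: W\setminus\Delta \to \Sigma$, to a pearly trajectory on $L$, and conversely that every pearly trajectory on $L$ lifts. The projection step relies on two inputs established later: first, by the neck-stretching argument of~\S\ref{S:stretching}, for admissible $J$ the holomorphic disks appearing in index-$0$ and index-$1$ pearly trajectories on $\Gamma_L$ avoid the neighborhood $U$ of $\Delta$, hence lie in the region where $\pi$ is $(J,J_\Sigma)$-holomorphic and so project to $J_\Sigma$-holomorphic disks on $L$ of the same Maslov index (using~\eqref{eq:pi_*} and Proposition~\ref{P:monotone-gl}(2)); second, the gradient segments of $X_\varepsilon$ project to gradient segments of $X=-\operatorname{grad}f$ away from the charts $\mathcal U_i$, and inside the charts the vertical correction term is what allows the bookkeeping $x \mapsto x'$, $y''\mapsto y$ to work. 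The lifting step uses~\S\ref{S:lifting} (solving a Dirichlet problem to lift disks) and standard Morse theory to lift gradient flow lines. The upshot is a bijection between the relevant $0$-dimensional moduli spaces on $\Gamma_L$ and suitable ones on $L$, which unwinds precisely to the chain-map identities $\diff\circ i = i\circ\diff$ and $\diff\circ p = p\circ\diff$ (with appropriate signs, i.e.\ mod $2$ here), once one tracks which critical points of $f_\varepsilon$ sit over a given critical point of $f$.

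For (c), exactness is purely linear-algebraic: on the underlying $\Lambda$-modules, $\cplx^*(\Gamma_L;\widetilde{\data}_\varepsilon) = \bigl(\bigoplus_{x\in\Crit f}\Z_2\langle x'\rangle \oplus \bigoplus_{y\in\Crit f}\Z_2\langle y''\rangle\bigr)\otimes\Lambda$, with $i$ the inclusion of the $x'$-summand and $p$ the projection onto the $y''$-summand (with a degree shift), so $\ker p = \operatorname{im} i$ and the sequence is exact by inspection; the long exact sequence in cohomology is then the standard snake-lemma consequence. For (d): canonicity follows from a continuation/comparison argument in~\S\ref{S:aux-data}, showing the sequence for two choices of $(\data,\widetilde{\data}_\varepsilon)$ are connected by compatible chain homotopy equivalences. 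The connecting homomorphism $\delta$ is by construction $[z]\mapsto [p^{-1}\circ\diff\circ i(z)]$ for a cocycle $z$, i.e.\ it counts pearly trajectories on $\Gamma_L$ from an $x'$-type point to a $y''$-type point; projecting these (again by~\S\ref{S:stretching}) produces a specific degree-$2$ operation on $\cplx^*(L)$ which I would identify with quantum multiplication by a distinguished class $e_F\in QH^2(L)$, defined and analyzed in~\S\ref{S:floer-euler}. The multiplicativity relations~\eqref{eq:multip-i-p-2} are proved in~\S\ref{S:products} by the same projection/lifting philosophy applied to the moduli spaces of holomorphic triangles defining the quantum products, so I would reduce to that section.

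The main obstacle is step (b), specifically the claim — proved via symplectic field theory and neck-stretching in~\S\ref{S:stretching} — that after stretching the neck sufficiently the holomorphic disks in low-index pearly trajectories on $\Gamma_L$ cannot reach the neighborhood $U$ of the skeleton $\Delta$. Without this, the projection $\pi$ is not holomorphic on the disks and the whole correspondence between pearly trajectories on $\Gamma_L$ and on $L$ breaks down. Everything else is either standard pearl-complex technology, elementary linear algebra, or deferred to the dedicated later sections; the SFT compactness/convergence analysis controlling where the disks can go is the genuinely nontrivial ingredient.
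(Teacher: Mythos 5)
Your proposal reproduces the paper's own roadmap: the short exact sequence is taken as algebraically obvious, the chain-map identities follow from the projection (neck-stretching, \S\ref{S:stretching}) and lifting (\S\ref{S:lifting}) correspondence between index-$0$ pearly trajectories on $\Gamma_L$ and those on $L$, and well-definedness/transversality, canonicity, the identification $\delta = *\,e_F$, and multiplicativity are deferred to \S\ref{S:transversality}, \S\ref{S:aux-data}, \S\ref{S:floer-euler}, \S\ref{S:products}, exactly as the paper does. The one slight imprecision is attributing index-$1$ control to the neck-stretching argument: Proposition~\ref{p:stretching-neck} is proved only for index $0$ (Remark~\ref{r:stretch} extends it to index $1$ only under $C_\Sigma\ge2$), so the paper instead obtains $\widetilde d\,^2=0$ on $\mathcal{C}(\Gamma_L;\widetilde{\data}_\varepsilon)$ by comparison with a generic non-admissible perturbation in \S\ref{sb:well-def}, but this does not alter the overall strategy, which coincides with yours.
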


The exactness property of the short sequence above is obvious. The
nontrivial statements are:
\begin{itemize}
  \item $i$ and $p$ are chain maps. This property will follow from the
   results presented in~\S\ref{S:stretching} and~\S\ref{S:lifting}.
   The argument is concluded in~\S\ref{S:chain-maps}.
  \item the resulting sequence in homology is canonical.  The details
   are provided in~\S\ref{S:aux-data}.
  \item the connecting homomorphism is given by quantum multiplication
   by a class $e_F \in QH^2(L)$. This will be proved
   in~\S\ref{S:floer-euler}.
  \item the maps $i$ and $p$ satisfy the multiplicative
   identities~\eqref{eq:multip-i-p-2}. This will be proved
   in~\S\ref{S:products}.
\end{itemize}

\S\ref{S:stretching} will be devoted to precise definitions of the
class of almost complex structures used, and~\S\ref{S:transversality}
for establishing the transversality results.

\section{Stretching the neck and admissible almost complex structures}
\label{S:stretching}

In all constructions which follow in this paper we will restrict
ourselves to a specific class of almost complex structures which is
described as follows.  Fix a regular almost complex structure
$J_\Sigma$ on $\Sigma$ which is tamed by $\omega_\Sigma$. 
Given $r>0$, denote by
$$E_r = \{ u \in \mathcal{N} \mid |u|\leq r \}$$ the closed disk bundle 
of radius $r$ in $\mathcal{N}$ (we use here the Hermitian metric
$|\cdot |$ chosen in~\S\ref{Sb:std-disk-bundle}).

Fix $\varepsilon > 0$. Below we will use the embedding $\mathcal{N}
\stackrel{F}{\to} M$ from Proposition~\ref{P:embedding} in order to
identify $\mathcal{N}$ as well as $E_r \subset \mathcal{N}$ with their
images in $M$. The complement in $M$ of the $(r_0+\varepsilon)$-disk
bundle $E_{r_0+\varepsilon}$ gives us a neighbourhood of the skeleton
$\Delta$.  We denote this neighbourhood by $U$.

We choose a connection $\nabla$ as in section~\ref{Sb:std-disk-bundle}
and, using the corresponding horizontal distribution $H^\nabla$, we
define an almost complex structure $J_{\mathcal{N}}$ on
$\mathcal{N}$ as follows. For $v \in H^\nabla$ put
\begin{equation} \label{Eq:J-def-1} J_{\mathcal{N}}(v) = \left(D \pi
      \big|_{H^\nabla}\right)^{-1} J_\Sigma \circ D \pi(v) .
\end{equation}
We extend $J_{\mathcal{N}}$ in the vertical direction by
multiplication by $i$ in the fibers. We define an almost complex
structure $J_M$ on $M$ by setting it to be $F_*(J_{\mathcal{N}})$ on
$M \setminus U$ (i.e. the pushforward of $J_{\mathcal{N}}$ by the
embedding $F : \mathcal{N} \to M$). We then extend $J_M$ to the rest
of $M$ in a generic way.

Denote by $M^+, M^-$ the connected components of $M \setminus P$,
where $M^-$ is the component containing the skeleton $\Delta$. For any
$R \geq 0$ set $$M^R = M^- \cup ([-R, R] \times P) \cup M^+,$$ with
the obvious gluing along the boundaries, namely $\{-R\} \times P$ is
identified with $\partial M^-$ and $\{R\} \times P$ with $\partial
M^+$. See figure~\ref{f:split}. We define an almost complex structure
$J_R$ on $M^R$ by first setting it to be equal to $J_M$ on $M^+$,
$M^-$.  We then extend this almost complex structure to $[-R, R]
\times P$ in invariant way under translations along $[-R, R]$. The
resulting almost complex structure is only continuous near $\partial
M^{\pm}$ but can be deformed near the boundary $\partial ([-R, R]
\times P)$ to a smooth almost complex structure on $M^R$ which we
denote by $J_R$.  (For this smoothing we choose a uniform deformation
which depends only on the $(t, \theta)$ coordinates on $[-R, R] \times
P$ and is independent of the projection to $\Sigma$).

Having defined $J_R$ on $M^R$ we will push it back to $M$ in the
following way. Let $\phi_R : [-R, R + \varepsilon] \to [r_0, r_0 +
\varepsilon]$ be a diffeomorphism such that
$\frac{\text{d}}{\text{d}t}\phi_R = -1$ near the boundary of $[-R, R +
\varepsilon]$. Then $\phi_R$ induces a diffeomorphism 
\begin{equation} \label{eq:lambda_R}
\lambda_R : M^R \to M,
\end{equation}
defined by identity on $U$ and $M^+$.  Note also that $\lambda_R$
preserves both the projection to $\Sigma$ and the angular coordinate
in a neighbourhood of $[-R, R] \times P$, and deforms the first
coordinate on $[-R, R] \times P$ (as well as the radial coordinate in
a neighbourhood of $[-R, R] \times P$) according to $\phi_R$. The
pushforward of $J_R$ by $\lambda_R$ defines an almost complex
structure on $M$ which we will denote by the same $J_R$ by abuse of
notation. A simple computation, based on the
description~\eqref{Eq:omcan} of $\omega$, shows that $J_R$ on $M$ 
tames $\omega$. Moreover $J_R$ has the following property: the
projection from the $(r_0+\varepsilon)$-disk bundle of $\mathcal{N}$
to $\Sigma$
\[ \pi : (E_{r_0+\varepsilon}, J_R) \to (\Sigma, J_\Sigma) \]
is holomorphic.
   
\begin{figure}[htbp]
   \begin{center}
      \epsfig{file=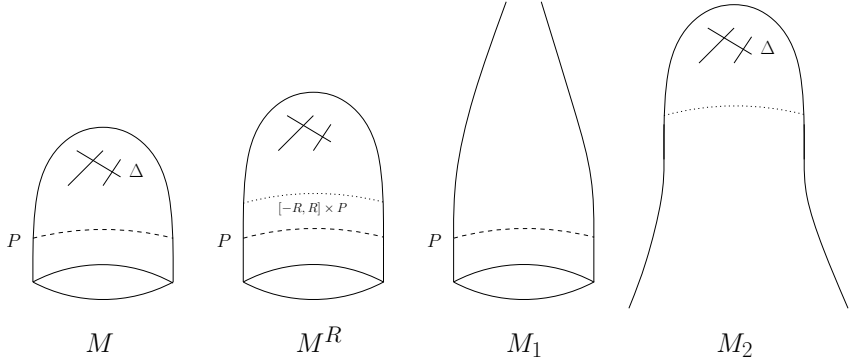, width=0.70\linewidth}
   \end{center}
   \caption{Splitting $M$ along $P$.}
   \label{f:split}
\end{figure}

For the rest of this section we will restrict our attention only to $W
= M \setminus \Sigma$. We denote by $J_W$ the restriction of the
almost complex structure $J_M$ to $W$. Put $$W^- = M^{-}, \quad W^+ =
M^{+} \setminus \Sigma, \quad W^R = M^R \setminus \Sigma.$$ We endow
these manifolds with the restrictions of the almost complex structures
we have just defined on $M^-$, $M^{+}$, $M^R$, i.e. $J_W$ and $J_R$.
The reason for defining all these structures beforehand on $M$ is that
later on in~\S\ref{S:seq-in-closed} we will use these structures to
obtain an analogous Floer-Gysin sequence for $\Gamma_L$ viewed as a
Lagrangian submanifold of $M$.

The construction above implies that replacing the almost complex
structure $J$ on $M$ (resp. $W$) by $J_R$ (with a large $R$) is
holomorphically equivalent to stretching the manifold $M$ (resp. $W$)
along $P$ in the sense of SFT~\cite{BEHWZ:compactness, EGH:SFT}. We
denote by
\[
\mathcal{J} = \mathcal{J} (J_\Sigma, U, R_0) = \left\{J_R \, \left|
      \, R > R_0 \right.\right\}
\]
the space of the stretched complex structures. 

For $J \in \mathcal{J}$ denote by
\begin{equation*}
   \pearlspace_0 (J) = \bigcup_{\delta(x,y,\mathbf{A}) = 0} 
   \pearlspace (x, y, \mathbf{A}; \widetilde{\data}_{\varepsilon})
\end{equation*}
the union of moduli spaces of pearl trajectories with zero virtual
dimension (for any critical points $x, y$) for $\Gamma_L \subset W$.

Given $r>0$, denote by $$E_r^* = E_r \setminus \Sigma \subset
\mathcal{N}$$ the punctured disk bundle or radius $r$ over $\Sigma$.
For $0<r_1 < r_2$ denote by $$E_{r_1, r_2} = E_{r_2} \setminus
\textnormal{Int\,} E_{r_1}$$ the (closed) annulus bundle over $\Sigma$
of inner radius $r_1$ and outer radius $r_2$. We call it the $(r_1,
r_2)$-annulus bundle of $\mathcal{N}$ over $\Sigma$.

The purpose of working with almost complex structures in $\mathcal{J}$
is the following:
\begin{prop}\label{p:stretching-neck}
   There exists $R_0 > 0$ such that for every $J_R$ as described above
   with $R > R_0$ the following holds: every pearly trajectory $\gamma
   \in \pearlspace_0 (J_R)$ is contained in the image $F(E_{r_0,
     r_0+\varepsilon})$ of the $(r_0, r_0+\varepsilon)$-annulus bundle
   of $\mathcal{N}$ under $F$.
\end{prop}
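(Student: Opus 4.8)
The plan is to argue by contradiction using SFT compactness. Suppose no such $R_0$ exists; then there is a sequence $R_k \to \infty$ and pearly trajectories $\gamma_k \in \pearlspace_0(J_{R_k})$ which are \emph{not} contained in $F(E_{r_0, r_0+\varepsilon})$. Each $\gamma_k$ is a chain of $J_{R_k}$-holomorphic disks $(u_1^k, \ldots, u_{l_k}^k)$ with boundary on $\Gamma_L$, joined by flow lines of $X_\varepsilon$. Since the total Maslov index (equivalently the total symplectic area, by monotonicity) is bounded above for index-$0$ pearly trajectories, the number of disk components $l_k$ and the energies are uniformly bounded. A disk component fails to lie in $F(E_{r_0,r_0+\varepsilon})$ precisely when its image enters $W^- = M^-$ (the side containing $\Delta$) or escapes toward $\Sigma$; but $\Gamma_L \subset P$, so the boundary always sits on the separating hypersurface. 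Thus each offending $u_i^k$ crosses the neck $[-R_k, R_k] \times P$ a definite amount. Translating back via $\lambda_{R_k}$, replacing $J_{R_k}$ by $J_{R_k}$ on $M$ is holomorphically equivalent to neck-stretching along $P$, so we may instead view $u_i^k$ as $J_{R_k}$-holomorphic curves in the stretched manifold $M^{R_k}$ whose necks grow without bound.

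Next I would invoke the SFT compactness theorem of \cite{BEHWZ:compactness} (in the relative/Lagrangian-boundary version): after passing to a subsequence, the disks $u_i^k$ converge to a holomorphic building. The building has top-level components in $W^+ = M^+ \setminus \Sigma$ (which is the relevant piece of $\mathcal{N}$, containing $\Gamma_L$), possibly levels in the symplectization $\R \times P$, and bottom-level components in $W^- = M^-$, together with a part of $\Gamma_L$ carried along the flow lines. The key input is that $P$, with the contact form $\alpha = \alpha^\nabla|_{P_1}$, is the unit bundle of $(\mathcal{N}, \omega_{\textnormal{can}})$ induced by $(\Sigma, \tau)$: its Reeb flow is (a reparametrization of) the fiberwise $S^1$-rotation, so all closed Reeb orbits are the circle fibers, with periods that are positive integer multiples of the generator. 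The punctures of the building components are therefore asymptotic to fiber circles, and each such puncture contributes a positive amount of $\pi^*\tau$-area (an integer, up to the normalization). Because $W^- = M^-$ deformation retracts onto the isotropic skeleton $\Delta$, which carries no symplectic area, a bottom-level component in $W^-$ with a puncture asymptotic to a Reeb orbit would have to have zero $\omega$-energy yet a nontrivial positive-period asymptotic end — a contradiction unless no such bottom component exists. Hence in the limit building there are \emph{no} components below $P$, and the only possible non-trivial piece is a cascade in $\R\times P$; but a non-constant finite-energy curve in the symplectization with only fiber-orbit asymptotics must again carry positive $\pi^*\tau$-area, and the total available area is the Maslov budget of an index-$0$ pearly trajectory, which (after fixing $r_0$) forces such components to be trivial cylinders. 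Therefore the limit building reduces to an honest pearly trajectory living entirely in $W^+$, i.e. in $F(E_{r_0, r_0+\varepsilon})$.

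Finally I would derive the contradiction from this limiting picture: the convergence in the SFT sense means that for $k$ large the curves $u_i^k$ are $C^0$-close (away from the necks, which are disappearing) to components of the limit building, all of which lie in the interior of $F(E_{r_0,r_0+\varepsilon})$; and the portions of $\gamma_k$ in the neck region can be made to stay in $F(E_{r_0,r_0+\varepsilon})$ by choosing the smoothing of $J_R$ near $\partial([-R,R]\times P)$ uniformly, as was arranged in the construction of $J_R$. This contradicts the assumption that $\gamma_k \notin F(E_{r_0,r_0+\varepsilon})$ for all $k$.

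The main obstacle is the bookkeeping in the SFT compactness argument: one must rule out \emph{all} components of the limit building below the hypersurface $P$ (and all non-trivial components in the symplectization), and this requires carefully combining (i) the monotonicity/Maslov area bound for index-$0$ pearly trajectories, (ii) the explicit identification of Reeb dynamics on $P$ with fiber rotation so that every puncture has positive $\pi^*\tau$-period, and (iii) the fact that $M^-$ retracts onto the isotropic skeleton $\Delta$ (which by Proposition~\ref{P:embedding}, and subcriticality in the $\dim_\R\Sigma=2$ case, carries no area and no disk classes of positive Maslov index). One also has to handle the flow-line edges of the pearly trajectory uniformly as $R\to\infty$, but since $X_\varepsilon$ is a fixed horizontal lift of $-\textnormal{grad}\,f$ away from the charts $\mathcal{U}_i$ and the chart perturbations are $\varepsilon$-small, the flow lines are uniformly controlled and stay near $\Gamma_L \subset P$, hence inside $F(E_{r_0,r_0+\varepsilon})$.
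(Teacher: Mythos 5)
There is a genuine gap, and it is at the heart of your argument: the step where you rule out bottom-level components of the limit building in $W^-$ does not hold. You write that ``$W^- = M^-$ deformation retracts onto the isotropic skeleton $\Delta$, which carries no symplectic area,'' and conclude that a finite-energy plane in $W^-_\infty$ asymptotic to a Reeb orbit would have to have zero $\omega$-energy. This conflates a cohomological fact (that $\Delta$ is isotropic, so $\omega$ restricts trivially to its cells) with the pointwise symplectic area of a two-dimensional curve that sweeps through the open Weinstein domain $W^-$. In reality $W^-=M\setminus \operatorname{Int} E_{r_0}$ carries plenty of symplectic area, and a finite-energy plane in $W^-_\infty$ ending on a Reeb orbit typically \emph{does} have positive energy. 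The paper's own proof explicitly constructs such a bottom-level component $u_2$, perturbs it off $\Delta$ (using $\operatorname{codim}\Delta>2$), projects to a sphere $v$ in $\Sigma$, and then shows $\int_{u_2^{-1}(W^-)}u_2^*\omega>0$ so that $c_1^\Sigma([v])>0$. So your proposed mechanism for a contradiction (no curves can live below $P$) is false, and the same issue undermines your claim that non-trivial curves in $\mathbb{R}\times P$ are forced to be trivial cylinders by ``the Maslov budget.''

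The actual argument in the paper therefore takes a different route at exactly this point. Instead of excluding below-$P$ components, it allows the building $\bar u$ to have a punctured disk $u_1$ in $W^+_\infty$, possibly $\mathbb{R}\times P$ levels, and a bottom plane $u_2$ in $W^-_\infty$; it projects $u_1$ to a genuine $J_\Sigma$-holomorphic disk on $(\Sigma,L)$ (the punctures being removable because Reeb orbits project to points), projects the perturbed $u_2$ to a sphere $v$ with $c_1^\Sigma([v])\ge 1$ by monotonicity of $\Sigma$, and then compares Maslov indices: $\mu_{\Gamma_L}(A)=\mu_L(\pi_*A)=\mu_L(B)+2c_1^\Sigma([v])\ge \mu_L(B)+2$, while genericity of $J_\Sigma$ guarantees that the projected (and trimmed) pearly trajectory $\gamma_\Sigma$ has nonnegative index. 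Combining these inequalities forces the original index to be strictly positive, contradicting $\gamma_n\in\pearlspace_0(J_{R_n})$. To repair your proposal you would need to replace the energy-vanishing argument with this index/dimension-count comparison; the SFT compactness setup and identification of Reeb dynamics with fiber rotation that you describe are correct and are indeed the first steps the paper takes.
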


Before proving this proposition we derive an important corollary. From
now on we will fix the constant $R_0$ which is large enough (so that
the conclusions of Proposition~\ref{p:stretching-neck} hold) and
will work with $J_R$ where $R > R_0$. We call $\mathcal{J} =
\mathcal{J} (J_\Sigma, U, R_0)$ the space of {\em admissible} almost
complex structures. The following corollary is an immediate
consequence of Proposition~\ref{p:stretching-neck}.
\begin{cor} \label{c:proj-pearly}
   Let $\data = (f, (\cdot, \cdot), J_\Sigma)$ be auxiliary data with
   generic $J_{\Sigma}$, and $\widetilde{\data}_\varepsilon =
   (f_\varepsilon, X_\varepsilon, J)$ as in~\S\ref{S:short-exact}
   where the almost complex structure $J$ is admissible. Then any
   $\gamma \in \pearlspace_0 (J)$ projects via $\pi$ to a genuine
   pearly trajectory on $\Sigma$.
\end{cor}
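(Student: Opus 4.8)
First, the corollary is immediate once one knows Proposition~\ref{p:stretching-neck}: if $\gamma=(u_1,t_1,\dots,u_l)\in\pearlspace_0(J)$ is contained in $F(E_{r_0,r_0+\varepsilon})$, then since the projection $\pi\colon(E_{r_0+\varepsilon},J)\to(\Sigma,J_\Sigma)$ is holomorphic, each $u_i$ pushes forward to a $J_\Sigma$-holomorphic disk with boundary on $L$; by the construction in~\S\ref{S:short-exact} the vector field $X_\varepsilon$ projects to $X=-\operatorname{grad}f$ (its vertical part dies under $D\pi$, and $X^{\textnormal{hor}}$ is the horizontal lift of $X$), so the flow $\Phi_t$ on $\Gamma_L$ projects to the negative gradient flow of $f$ on $L$, and the critical points of $f_\varepsilon$ lie over those of $f$. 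Hence $\pi$ preserves all the incidence relations defining $\pearlspace$, and $\pi_*\gamma$ is a genuine pearly trajectory on $(\Sigma,L)$. So the real content is Proposition~\ref{p:stretching-neck}, which I would prove by contradiction using SFT compactness.

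Suppose no such $R_0$ exists. Then there are $R_k\to\infty$ and $\gamma_k\in\pearlspace_0(J_{R_k})$ one of whose $J_{R_k}$-holomorphic disks $u_k\colon(\disk,\partial\disk)\to(W,\Gamma_L)$ is not contained in $F(E_{r_0,r_0+\varepsilon})$; since the gradient segments of $\gamma_k$ stay on $\Gamma_L\subset P_{r_0}\subset E_{r_0,r_0+\varepsilon}$, the image of $u_k$ must enter $W^+=M^+\setminus\Sigma$ (the region between $\Sigma$ and $P$) or the neighbourhood $U$ of $\Delta$. From $\delta(x,y,\mathbf A)=0$ we get $\mu(\mathbf A)=|y|-|x|+1\le n+2$, so by monotonicity of $\Gamma_L\subset W$ (Proposition~\ref{P:monotone-gl}, which is exactly where the hypothesis $\dim_{\mathbb R}\Sigma\ge 4$ or ``$W$ subcritical'' enters) the total $\omega$-area and the number $l$ of disks in $\gamma_k$ are bounded independently of $k$. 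Passing from $J$ to $J_R$ is, by construction, holomorphically the same as stretching $M$ along $P$; the closed Reeb orbits of $(P,\alpha^{\nabla})$ are exactly (covers of) the $S^1$-fibres of $\pi\colon P\to\Sigma$, all of action $\ge c>0$. By the uniform area bound and the SFT compactness theorem~\cite{BEHWZ:compactness,EGH:SFT}, after passing to a subsequence the $u_k$ converge to a holomorphic building with levels in the completed pieces $\widehat{W^-}$, $\widehat{W^+}$ and in the symplectization $\mathbb R\times P$, with boundary on $\Gamma_L$ and all punctures asymptotic to fibre orbits. Since $u_k$ escapes $F(E_{r_0,r_0+\varepsilon})$ for every $k$, this building must contain a non-constant component $v$ whose image is not contained in the part of the building mapping to $F(E_{r_0,r_0+\varepsilon})$; equivalently, $v$ meets $W^+$ or $U$.

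Now I rule out such a $v$. Away from $U$ the almost complex structure is adapted to $J_\Sigma$, so $\pi$ is holomorphic on $E_{r_0+\varepsilon}\setminus\Sigma\cong W\setminus\Delta$; and for $J$ generic near $\Delta$ a non-constant simple holomorphic curve misses $\Delta$, because $\dim_{\mathbb R}\Delta\le\tfrac12\dim_{\mathbb R}M$ (even easier when $W$ is subcritical, cf.\ Proposition~\ref{P:embedding}(2),(4)). Hence $v$ lies in $W\setminus\Delta$, its punctures become removable under $\pi$, and $\pi\circ v$ extends to a non-constant $J_\Sigma$-holomorphic disk in $(\Sigma,L)$ or sphere in $\Sigma$. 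By monotonicity --- of $L$, with $N_L\ge 2$, in the disk case, and of $W$ in the sphere case --- such a curve has strictly positive Maslov index, resp.\ Chern number. Plugging this into the SFT index formula, and using the transversality established in~\S\ref{S:transversality} (so that each level of a generic building is regular of non-negative index and the indices add up, with the standard corrections, to the dimension $\delta=0$ of the source moduli space), the presence of $v$ forces the expected dimension of the limiting configuration to be negative --- a contradiction. Therefore for $k$ large all the disks of $\gamma_k$ are contained in $F(E_{r_0,r_0+\varepsilon})$, contradicting the choice of $\gamma_k$, and the required $R_0$ exists.

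The main obstacle is this last step, namely the SFT index bookkeeping: one must show that a $0$-dimensional moduli space of pearly trajectories on $\Gamma_L$ cannot, in the stretched limit, break off any non-constant component leaving the thin annulus bundle. This rests on three ingredients that each need care: the non-standard transversality of~\S\ref{S:transversality} (admissible $J$'s are constrained to be $J_\Sigma$-adapted and to have a long neck, so genericity must be extracted from the choice of $J_\Sigma$); the precise comparison of Maslov indices in $(W,\Gamma_L)$, $(W\setminus\Delta,\Gamma_L)$ and $(\Sigma,L)$ coming from Proposition~\ref{P:monotone-gl}; and the correct formulation of SFT compactness in a setting where $W$ is concave at infinity, which is circumvented by performing the stretching on the \emph{closed} manifold $M$ (on which $J_R$ was defined) and only afterwards restricting to $W$.
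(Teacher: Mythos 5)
Your first paragraph is correct and is what the paper means by calling the corollary an immediate consequence of Proposition~\ref{p:stretching-neck}: once the disks of a zero-index pearly trajectory are trapped in the annulus bundle $F(E_{r_0,r_0+\varepsilon})$, the projection $\pi$ is $(J,J_\Sigma)$-holomorphic there, $X_\varepsilon$ projects to $-\textnormal{grad}\,f$, and critical points of $f_\varepsilon$ sit over those of $f$, so $\pi$ sends each piece of the trajectory to the corresponding piece on $(\Sigma,L)$. A small omission in your setup: the paper first applies the maximum principle to get $u(\textnormal{Int\,}D)\subset W\setminus E_{r_0}$ for non-constant disks, so escape into $W^+$ is already excluded; your phrasing ``$u_k$ must enter $W^+$ or $U$'' leaves a direction open that the paper closes before stretching.

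The real gap is in your proof of Proposition~\ref{p:stretching-neck}, and it is a circularity. The concluding index step invokes ``the transversality established in~\S\ref{S:transversality} (so that each level of a generic building is regular of non-negative index$\ldots$).'' But Proposition~\ref{p:transversality-pearl} --- the transversality result for pearly moduli spaces on $(W,\Gamma_L)$ --- is itself proved \emph{using} Proposition~\ref{p:stretching-neck}: its proof opens by invoking that proposition to confine the disks to $E_{r_0+\varepsilon}$ before projecting to $\Sigma$. Moreover, what your argument actually needs is SFT-type regularity for holomorphic buildings in $W^\infty$, which \S\ref{S:transversality} never establishes and which would be hard to arrange for the constrained admissible $J$'s. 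The paper avoids both problems --- this is exactly the content of Remark~\ref{r:transversality-stretch} --- by never requiring any regularity on the $W$ side. Instead, it projects the limit building to $\Sigma$: the component near $\Gamma_L$ projects to a genuine $J_\Sigma$-holomorphic disk (punctures are removable because Reeb orbits of $(P,\alpha)$ are the fibres); the finite-energy plane $u_2$ in $W^-_\infty$ is perturbed \emph{non-holomorphically} off $\Delta$ (so neither genericity of $J$ near $\Delta$ nor simplicity of $u_2$ is needed, unlike in your argument) and then projected to a non-holomorphic sphere $v$ with $c_1^\Sigma([v])\ge 1$ by area positivity and monotonicity of $\Sigma$. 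Removing $v$ leaves a genuine pearly trajectory $\gamma_\Sigma$ on $(\Sigma,L)$, and since $J_\Sigma$ is unconstrained and generic, $\gamma_\Sigma$ lies in a moduli space of non-negative virtual dimension. Together with $\mu_{\Gamma_L}([\bar u])=\mu_L(B)+2c_1^\Sigma([v])\ge\mu_L(B)+2$ and $|\tilde x|\le|x_\Sigma|+1$, $|\tilde y|\ge|y_\Sigma|$, this gives total index $>0$, contradicting $\delta=0$. You correctly flagged transversality as the delicate point, but the resolution is to transfer regularity to $(\Sigma,L)$, not to extract it on $W$ or for the building --- your proposed route does not escape the circular dependence on \S\ref{S:transversality}.
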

Note that the index of the projection $\pi(\gamma)$ might sometimes be
$1$ rather than $0$.

\begin{rem} \label{r:stretch} As we will see in the proof of
   Proposition~\ref{p:stretching-neck} below, the conclusions of
   Proposition~\ref{p:stretching-neck} and
   Corollary~\ref{c:proj-pearly} continue to hold also for pearly
   trajectories $\gamma \in \pearlspace (x, y, \mathbf{A};
   \widetilde{\data}_{\varepsilon})$ with $\delta(x,y,\mathbf{A}) = 1$
   provided that the minimal Chern number $C_{\Sigma}$ of $\Sigma$ is
   at least $2$. Here by the minimal Chern number of $\Sigma$ we mean
   the following number: $C_{\Sigma} = \min \{ c_1^{\Sigma}(S) \mid S
   \in \pi_2(\Sigma), \, c_1^{\Sigma}(S) > 0\}$.
\end{rem}

\begin{proof}[Proof of Proposition~\ref{p:stretching-neck}]
   First of all note that by the maximum principle every non-constant
   $J$-holomorphic disk (for $J \in \mathcal{J}$) $u:(D, \partial D)
   \to (W, \Gamma_L)$ must satisfy $u(\textnormal{Int\,} D) \subset W
   \setminus E_{r_0}$. The main part of the proof is to show that for
   $R_0 \gg 0$ the following holds: for every $R \geq R_0$ all
   $J_R$-holomorphic disks $u$ that participate in index $0$ pearly
   trajectories (for $(W, \Gamma_L)$) have their images lying inside
   $E_{r_0 + \varepsilon}$.

   Below we will refer to the results of~\cite{BEHWZ:compactness}. We
   remark that the statements of that paper hold also for holomorphic
   curves with boundary on Lagrangian submanifolds.

   Put $W^+_{\infty} = (-\infty, 0] \times P \cup_\partial W^+$ and
   $W^-_{\infty} = W^- \cup_\partial [0, \infty) \times P$ each glued
   along the boundary. The almost complex structure $J_W$ on $W^+$ and
   $W^-$ is extended to the cylindrical ends by invariance under
   transtaltion in $t$ coordinate.  (One smoothens the resulting
   almost complex structures near the boundary in the fiber direction
   in a standard way).  Set $(W^{\infty}, J^{\infty})$ to be equal to
   the disjoint union $W^+_{\infty} \cup W^-_{\infty}$, each endowed
   with the preceding almost complex structures.  This way, the split
   manifold $(W^{\infty}, J^{\infty})$ can be considered as a limit of
   $(W^R, J_R)$ when $R \to \infty$, in the sense
   of~\cite{BEHWZ:compactness, EGH:SFT}. See figure~\ref{f:split}.

   Assume by contradiction, that for a generic almost complex
   structure $J_\Sigma$ on $\Sigma$ the statement of the proposition
   is not true, that is, for any $R > 0$ there exists a pearly
   trajectory $\gamma \in \mathcal{P}_0 (J_R)$ which leaves the image
   of the$(r_0+\varepsilon)$-disk bundle.  Let $R_n$ be a sequence of
   stretching parameters with $R_n \longrightarrow \infty$ and let
   $\gamma_n \in \mathcal{P}_0 (J_{R_n})$ be a sequence of pearly
   trajectories with zero index which leave the
   $(r_0+\varepsilon)$-disk bundle.  Under the holomorphic
   identification between $(W, J_R)$ and $(W^R, J_R)$, we have a
   sequence of manifolds $W^{R_n}$ together with a sequence of pearly
   trajectories in $W^{R_n}$. We will use the same notation $\gamma_n$
   for these trajectories.

   For simplicity of notation, we assume that each $\gamma_n$ contains
   a single holomorphic disk $u_n: (D, \partial D) \to (W^{R_n},
   \Gamma_L)$. (The general case is similar.) Restricting ourselves to
   a subsequence if needed, we may assume that all $u_n$ have the same
   Maslov index. We denote by $u'_n: (D, \partial D) \to (W,
   \Gamma_L)$ the disks corresponding to $u_n$ via $\lambda_R$, i.e.
   $u'_n = \lambda_r \circ u_n$.

   Using the notation of~\cite{BEHWZ:compactness}, the $\omega$-energy
   of a $J$-holomorphic curve $u$ in $W^R$ translates in our notation
   to the following:
   \[
   E_\omega (u) = \int_{u^{-1}(W^+ \cup W^-)} u^* \omega +
   \int_{u^{-1} ([-R, R] \times P)} u^*\pi_\Sigma^* \omega_\Sigma .
   \] 
   In view of monotonicity of $\Gamma_L$, the area of the disks
   ${u'}_n : D \to W$ satisfies $\int_D {u'}_n^* \omega = C$, where
   the constant $C$ is independent of $n$. A simple computation (based
   on~\eqref{Eq:omcan}) shows that:
   \[
   \int_{u^{-1} ([-R, R] \times P)} {u}^*\pi_\Sigma^* \omega_{\Sigma}
   \leq \int_{u^{-1} ([-R, R] \times P)} {u}^* \lambda_R^*\omega.
   \]
   It follows that $E_\omega(u_n) \leq C$ for every $n$. Lemma~9.2
   of~\cite{BEHWZ:compactness} implies then a uniform bound on the
   full energy $E(u_n)$ (see ~\cite{BEHWZ:compactness} for the
   definition of this energy). 

   Theorem~10.3 of~\cite{BEHWZ:compactness} describes the
   compactification of the space of $J$-holomorphic curves $\{u:D \to
   (W^R, J_R) \, | \, E(u) \leq C\}$. According to this result, there
   is a subsequence $u_{n_k}$ of $u_n$ which converges to a so-called
   holomorphic building $\overline{u}$ in $W^\infty$. This
   $\overline{u}$ is a disconnected $J_{\infty}$-holomorphic curve
   which consists of the following connected components:
   \begin{itemize}
     \item a $J$-holomorphic map $u_1: (S_1, \partial S_1) \to
      (W^+_{\infty}, \Gamma_L)$, where $S_1$ is a disk with one or
      more punctures. Near these punctures $u_1$ is asymptotically
      cylindrical and converges to a periodic orbit of the Reeb vector
      field of $(P, \alpha)$. (Here $\alpha$ is the connection
      $1$-form as chosen in~\S\ref{Sb:std-disk-bundle}.) Note that due
      to our choice of $\alpha$ the periodic orbits of the Reeb vector
      field are precisely the fibres of the circle bundle $P \to
      \Sigma$.
     \item a number of $J$-holomorphic maps, each of them looks like
      $u_2: S_2 \to W^-_{\infty}$ where
      $S_2$ is a sphere with one or more punctures. $u_2$ is
      asymptotically cylindrical near each puncture in a similar way
      to $u_1$. For simplicity we will assume that there exists one 
      such map. In the case there are many, the argument is the same.
     \item in addition, $\overline{u}$ may contain a number of
      $J$-holomorphic maps $u_i : S_i \to \R \times P$ where each
      $S_i$ is a sphere with one or more punctures each. $u_i$ are
      asymptotically cylindrical near each puncture as well.
   \end{itemize}
   Moreover, the components of $\overline{u}$ fit over the punctures,
   so they admit gluing to a topological disk.

   Coming back to our situation, there is a subsequence of
   $\{\gamma_n\}$ that converges to a pearly-like trajectory
   $\overline{\gamma}$ which has instead of a usual holomorphic disk a
   $J_{\infty}$-holomorphic building $\overline{u}$ attached. We claim
   that this implies that the virtual dimension of the corresponding
   moduli space of trajectories is positive.  This will give a
   contradiction to our initial assumption that $\gamma_n \in
   \mathcal{P}_0 (J_{R_n})$. Note that apriori, in addition to the
   above limit, one may have all possible limits of pearly
   trajectories as described in~\cite{Bi-Co:qrel-long, Bi-Co:rigidity},
   e.g. breaking of gradient trajectories, bubbling of disks or
   spheres etc.  For simplicity of notation, we assume that the
   holomorphic building $\overline{u}$ consists only of two
   components: a punctured disk $u_1 : (S_1, \partial S_1) \to
   W^+_{\infty}$ and a punctured sphere (i.e. a finite energy plane)
   $u_2: S_2 \to W^-_{\infty}$, where each component has a single
   puncture. The general case can be treated in a similar way to what
   is done below.

   By the definition of $J_{\infty}$ on $W^+_{\infty}$, the projection
   $\pi_1:W^+_{\infty} \to \Sigma$ is $(J_{\infty},
   J_{\Sigma})$-holomorphic, hence $\pi_1$ sends $u_1$ to a punctured
   disk $\pi_1 \circ u_1:(S_1, \partial S_1) \to (\Sigma, L)$. The
   periodic orbits at infinity project via $\pi_1$ to single points in
   $\Sigma$ since they are exacly the fibres of the circle bundle $P
   \to \Sigma$. Due to the asymptotic behavior of $u_1$ near the
   puncture $z$ we obtain that $\pi_1 \circ u_1$ extends continuously
   at the puncture.  Therefore $z$ is a removable singularity and
   $\pi_1 \circ u_1$ becomes a genuine $J_{\Sigma}$-holomorphic disk.

   We would like now to project $u_2:S_2 \to W^-_{\infty}$ to
   $\Sigma$. However, this cannot be done directly. Recall that on
   $W^-_{\infty}$ we have a projection defined only away from the
   skeleton, $\pi_2:W^-_{\infty} \setminus \Delta \to \Sigma$, and
   moreover this projection is not holomorphic on $U \setminus
   \Delta$. We deal with this difficulty as follows. As $\text{codim }
   \Delta > 2$, we can always perturb $u_2$ near $\Delta$ (in a
   non-holomorphic way) and obtain a new surface
   ${\widetilde{u}_2}:S_2 \to W^-_{\infty}$ with ${\widetilde{u}_2}
   (S_2) \cap \Delta = \emptyset$. Then $\pi_2 \circ
   {\widetilde{u}_2}$ gives a (not necessarily holomorphic) sphere $v
   : S^2 \to \Sigma$. (Again, the puncture goes to a point at which we
   have a removable singularity.)  We claim that $v$ has a positive
   Chern number. To see this recall that $\Sigma$ is monotone, hence
   $c_1^\Sigma = \lambda [\omega_\Sigma]$ on $\pi_2 (\Sigma)$ for some
   $\lambda > 0$.  Therefore we have:
   \begin{equation} \label{eq:c1-v} c_1^\Sigma ([v]) = \lambda
      \omega_{\Sigma} ([v]) = \lambda \int_{S_2} {\widetilde{u}_2}^*
      \pi_2^* \omega_\Sigma = \lambda
      \int_{{\widetilde{u}_2}^{-1}(W^-)} {\widetilde{u}_2}^* \pi_2^*
      \omega_{\Sigma} + \lambda
      \int_{{\widetilde{u}_2}^{-1}(W^-_{\infty} \setminus W^-)}
      {\widetilde{u}_2}^* \pi_2^* \omega_{\Sigma}.
   \end{equation}
   The 2'nd term is non-negative since $\pi_2$ is holomorphic on
   $W^-_{\infty} \setminus W^-$. As for the first term we have:
   $$\int_{{\widetilde{u}_2}^{-1}(W^-)} 
   {\widetilde{u}_2}^* \pi_2^*\omega_{\Sigma} =
   e^{(r_0+\varepsilon)^2} \int_{{\widetilde{u}_2}^{-1}(W^-)}
   {\widetilde{u}_2}^* \omega = e^{(r_0+\varepsilon)^2}
   \int_{u_2^{-1}(W^-)} u_2^* \omega > 0,$$ where the equalities 
   follow from Stokes theorem (recall that the perturbation
   ${\widetilde{u}_2}$ took place away from the boundary $u_2(\partial
   S_2)$). The last inequality holds because $u_2$ is holomorphic.
   This proves that $c_1^{\Sigma}([v])>0$.
   
   Next, replace in the ``pearly'' trajectories $\overline{\gamma}$
   the holomorphic curve $u_2$ by its perturbation $\widetilde{u}_2$.
   We continue to denote this trajectory by $\overline{\gamma}$.
   Consider now its projection $\pi \circ \overline{\gamma}$ to
   $\Sigma$. The projected trajectory is a pearly trajectory on
   $\Sigma$ whose disk $\pi_1 \circ u_1$ has a non-holomorphic sphere
   $v$ attached, and moreover $c_1^{\Sigma}([v])>0$. ($v$ cannot be
   constant because in this case $u_2$ would have zero
   $\omega$-energy.) Denote by $\gamma_{\Sigma}$ the trajectory
   obtained from $\pi \circ \overline{\gamma}$ after removing the
   sphere $v$. Note that $\gamma_{\Sigma}$ is a genuine pearly
   trajectory.

   Denote by $A \in H_2(W \setminus \Delta, \Gamma_L)$ the total
   homology class in $\overline{\gamma}$ and by $B \in H_2(\Sigma,L)$
   the total homology class in $\gamma_{\Sigma}$ after the sphere $[v]$ is removed, 
   i.e. $B = \pi_*(A) - [v]$. Let $\widetilde{x},
   \widetilde{y}$ be the starting and the ending critical points for
   $\overline{\gamma}$.  Thus $\gamma_{\Sigma}$ connects $x_\Sigma =
   \pi(\widetilde{x})$ with $y_\Sigma = \pi(\widetilde{y})$. As
   $\gamma_{\Sigma}$ is a genuine pearly trajectory and $J_{\Sigma}$
   is regular, the virtual dimension of the corresponding moduli space
   $\mathcal{P}(x_\Sigma, y_\Sigma, B; J_\Sigma)$ is non-negative:
   \begin{equation*}
      |x_\Sigma| - |y_\Sigma| + \mu_L (B) - 1 \geq 0
   \end{equation*}
   Note that $|\widetilde{y}| \geq |y_\Sigma|$ and $|\widetilde{x}|
   \leq |x_\Sigma| + 1$.  Therefore
   \begin{equation*}
      |y_\Sigma| - |x_\Sigma| \leq |\widetilde{y}| - |\widetilde{x}| 
      + 1.
   \end{equation*}
   We also have:
   \begin{equation} \label{eq:proj-u-bar}
      \mu_{\Gamma_L} ([\overline{u}]) = 
      \mu_{\Gamma_L}(A) = \mu_L(\pi_* A) = 
      \mu_L(B) + 2 c_1^{\Sigma}([v]) \geq \mu_L(B) + 2.
   \end{equation}
   All together this gives us
   \begin{equation*}
      |\widetilde{y}| - |\widetilde{x}| + 
      \mu_{\Gamma_L}([\overline{u}]) - 1 \geq |y_\Sigma| - 
      |x_\Sigma| - 1 + \mu_L(B) + 2 - 1 = 
      \left(|y_\Sigma| - |x_\Sigma| + \mu_L(B) - 1 \right) + 1 > 0,
   \end{equation*}
   which contradicts the assumption that we are in a moduli space of
   index $0$.

   The other configurations that might appear in the limit of
   $\gamma_n$ can be dealt with by a combination of the argument above
   and the compactification of spaces of pearly trajectories as
   described in~\cite{Bi-Co:qrel-long, Bi-Co:rigidity}.
\end{proof}

\begin{rem} \label{r:transversality-stretch} Note that in the proof of
   Proposition~\ref{p:stretching-neck} we have used {\em only}
   transversality for spaces of pearly trajectories on $(\Sigma,L)$,
   not for $(W,\Gamma_L)$.
\end{rem}

\section{Transversality} \label{S:transversality} The aim of this
section is to establish the needed transversality results for the
spaces of pearly trajectories involved in the quantum cohomologies of
$L$ and $\Gamma_L$ that appear in our long exact sequence. While the
general theory of pearl homology~\cite{Bi-Co:qrel-long,
  Bi-Co:rigidity} assures this transversality for generic choice of
auxiliary data, this is apriori not the case in our setting. For
example, the almost complex structures $J$ that we use on $W$ are not
arbitrary as they depend strongly on $J_{\Sigma}$, in particular they
cannot be assumed to be generic in the strict sense of the word. Still
we will see below that transversality can be still achieved by taking
$J_{\Sigma}$ to be generic.

\subsection{Regularity of $J_R$} \label{sb:reg-J_R} Holomorphic disks
in $u:(D, \partial D) \to (W, \Gamma_L)$ fall into two types. Those who
go out from $E_{r_0+\varepsilon}$ and those who remain entirely inside
$E_{r_0+\varepsilon}$. Transversality for the first type is easy to
achieve: recall that in our set of admissible $J$'s there was no
restriction on $J$ outside of $E_{r_0+\varepsilon}$. Thus we can take
$J$ to be generic on $M \setminus E_{r_0+\varepsilon}$, and the
general theory~\cite{McD-Sa:Jhol-2} assures that such $J$'s will be
regular for this type of disks.

We now turn to those disks that are entirely contained in
$E_{r_0+\varepsilon}$. In fact, as we saw in~\S\ref{S:stretching},
these are the most relevant disks, as all pearly trajectories of index
$0$ involve only disks inside $E_{r_0+\varepsilon}$.

We want to show that for a choice of a regular $J_\Sigma$ on $\Sigma$
any \emph{admissible} $J_R$ (as it is constructed
in~\S\ref{S:stretching}) satisfies regularity conditions on the disk
bundle $E_{r_0+\varepsilon}$. This would imply that the moduli space
$\mathcal{M}^*(A;J_R)$ of simple $J_R$-holomorphic disks $u:(D,
\partial D) \to (E_{r_0+\varepsilon}, \Gamma_L)$ with $u_*([D])=A$ is
a smooth finite dimensional manifold.

To prove the statement, we replace $(E_{r_0+\varepsilon}, J_R)$ by a
disk bundle $E_{A(R)} \subset (\mathcal{N}, J_\mathcal{N})$ using the identifications
defined in~\S\ref{S:stretching}, where $A(R)$ depends on $R$.
Below we will use the same notation $\Gamma_L$ for the image of
$\Gamma_L$ in $E_R$.  Recall from~\cite{McD-Sa:Jhol-2} (see~\S 3.1
there) that regularity of an almost complex structure means the
surjectivity of the linearization of the $\bar{\partial}$--operator
$D_u$ at each $J$-holomorphic disk $u : (\disk, \partial \disk) \to
(E_R, \Gamma_L)$.

Let $u: (\disk, \partial \disk) \to (E_R, \Gamma_L)$ be a holomorphic
disk.  Note, that the projection $\pi \circ u : (\disk, \partial
\disk) \to (\Sigma, L)$ is $J_\Sigma$-holomorphic.  Pick a holomorphic
trivialization $g : (\pi \circ u)^* \mathcal{N} \to \disk \times \C$.
Using this trivialization, we associate to $u$ a pair of holomorphic
maps $(u_\Sigma, u_\mathcal{N})$ where $u_\Sigma = \pi \circ u$ and
$u_\mathcal{N} : \disk \to \C$ is the projection of $g \circ u$ to the
second component. Accordingly, we have an associated pair of
linearizations of the $\bar{\partial}$--operator $(D_{u_\Sigma},
D_{u_\mathcal{N}})$.  For the surjectivity of $D_u$ it is sufficient
to show that both $D_{u_\Sigma}$ and $D_{u_\mathcal{N}}$ are
surjective.  This property holds for $D_{u_\Sigma}$ from the
regularity of $J_\Sigma$. The same is true for $D_{u_\mathcal{N}}$
since the almost complex structure in the fiber $\C$ (multiplication
by $i$) is regular.

\subsection{Transversality for pearly trajectories of index $0$}
\label{sb:trans-ind-0} Let $\data = (f, (\cdot, \cdot), J_\Sigma)$ be
a choice of Morse function, metric on $L$ and almost complex structure on
$\Sigma$. Recall from~\S\ref{Sb:setting} that in order to construct
$\widetilde{\data}_{\varepsilon}$ we need the following additional
auxiliary objects: $(\nabla, \alpha, h, J_R)$, where $\nabla$ is a
connection as chosen in~\S\ref{Sb:std-disk-bundle}, $\alpha$
represents a choice of cutoff functions near $\textnormal{Crit}(f)$
and $h$ stands for a collection of Morse functions $S^1 \to
\mathbb{R}$, as was described in~\S\ref{Sb:setting}. Here $J_R$ is an
admissible almost complex structure on $M$ which is induced from
$J_\Sigma$ and satisfies Proposition~\ref{p:stretching-neck}. We will
use the same notation $J_R$ for the induced almost complex structure
on $E_{r_0+\varepsilon}$.

Denote by $Q_{f}$ the image of the embedding:
$$
(L \setminus \textnormal{Crit}(f)) \times \mathbb{R}_{>0}
\hooklongrightarrow L \times L, \quad (x, t) \longmapsto (x,
\Phi^{f}_t(x)),
$$
where $\Phi^{f}_t$ is the negative gradient flow of $f$. Similarly,
define $Q_{\widetilde{X}_\varepsilon} \subset \Gamma_L \times
\Gamma_L$ to be the image of the embedding:
\[
(\Gamma_L \setminus \textnormal{Crit}(f_{\epsilon})) \times
\mathbb{R}_{>0} \hooklongrightarrow \Gamma_L \times \Gamma_L, \quad
(x, t) \longmapsto (x, \Phi^X_t(x))
\] 
where $\Phi^X_t$ is the flow of $\widetilde{X}_\varepsilon$.  Let
$\mathcal{M}(A, J)$ be the moduli space of holomorphic disks in the
homology class $A \in H_2(W,\Gamma_L)$. For a sequence $\bold{A} =
(A_1, \ldots, A_l)$ of non-zero classes $A_i \in H_2 (W, \Gamma_L)$
put $$\mathcal{M}(\bold{A}, J) = \mathcal{M}(A_1, J) \times \ldots
\times \mathcal{M}(A_l, J).$$ The space $\mathcal{M}(\mathbf{A}, J)$
comes with an evaluation map:
\[
ev_\bold{A} : \mathcal{M}(\bold{A}, J) \longrightarrow
\Gamma_L^{\times 2l}, \quad ev_\bold{A} (u_1, \ldots, u_l) = \left(
   u_1(-1), u_1(1), \ldots, u_l(-1), u_l(1) \right).
\]
Similarly we have the spaces $\mathcal{M}^*(A_i,J) \subset
\mathcal{M}(A_i,J)$ of simple disks and $\mathcal{M}^*(\bold{A}, J) =
\mathcal{M}^*(A_1, J) \times \ldots \times \mathcal{M}^*(A_l, J)
\subset \mathcal{M}(\mathbf{A},J)$. Note that in general
$\mathcal{M}(\mathbf{A},J)$ might not be a smooth manifold (even for
generic $J$'s). On the other hand, by what we have just seen
in~\S\ref{sb:reg-J_R} for generic admissible $J$ the spaces
$\mathcal{M}^*(\mathbf{A},J)$ are smooth manifolds.
(See~\cite{Bi-Co:qrel-long} for more details on this issue.)  Denote
by $H \subset \textnormal{Aut}(D) \cong PSL(2,\mathbb{R})$ the
subgroup of all biholomophisms $\sigma: D \to D$ which fix the two
points $-1,1 \in D$, $\sigma(\pm 1) = \pm 1$. The group $H$ acts on
$\mathcal{M}^*(A_i,J)$ by parametrization, i.e. $\sigma \cdot u = u
\circ \sigma^{-1}$. Applying this to each factor of
$\mathcal{M}^*(A_i,J)$ we obtain an action of $H^{\times l}$ on
$\mathcal{M}^*(\mathbf{A},J)$.

Let $\tilde{x}, \, \tilde{y} \in \Crit \;
(\widetilde{X}_\varepsilon)$.  Put $$\widetilde{R} = W^s_{\tilde{x}}
\times (Q_{\widetilde{X}_\varepsilon})^{\times (l-1)} \times
W^u_{\tilde{y}}.$$ With this notation we have:
$$\mathcal{P}(\tilde{x},\tilde{y}, \mathbf{A};
\widetilde{\data}_{\varepsilon}) =
ev_{\mathbf{A}}^{-1}(\widetilde{R}) / H^{\times l}.$$

\begin{prop} \label{p:transversality-pearl} Let $\data = (f, (\cdot,
   \cdot), J_{\Sigma})$ be generic data on $(\Sigma, L)$. Let $J_R$ be
   an admissible almost complex structure as in
   Proposition~\ref{p:stretching-neck}, and let $h$ be a generic
   collection of functions. Let $\tilde{x}, \, \tilde{y} \in \Crit \;
   (\widetilde{X}_\varepsilon)$, $\mathbf{A}$ with $\delta =
   \delta(\tilde{x}, \tilde{y}, \mathbf{A})\leq 0$. Then:
   \begin{enumerate}
     \item Every tuple of holomorphic disks $\mathbf{u} \in
      \mathcal{M}(\bold{A}, J_R)$ that participates in
      $\mathcal{P}(\tilde{x},\tilde{y}, \mathbf{A};
      \widetilde{\data}_{\varepsilon})$ consists of simple and
      absolutely distinct disks (see Definition~3.1.1
      in~\cite{Bi-Co:qrel-long} for the definition).
     \item The restriction of $ev_{\mathbf{A}}$ to
      $\mathcal{M}^*(\mathbf{A},J_R)$ is transverse to $\widetilde{R}$.
   \end{enumerate}
   In particular the spaces of pearly trajectories
   $\mathcal{P}(\tilde{x},\tilde{y}, \mathbf{A};
   \widetilde{\data}_{\varepsilon})$ are smooth manifolds of dimension
   $\delta$. (In particular when $\delta<0$ they are void.) Moreover,
   when $\delta=0$ these manifolds are compact, hence consist of a
   finite number of elements.
\end{prop}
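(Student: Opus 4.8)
The plan is to deduce both statements, and their consequences, from two facts already available: the smoothness of the moduli spaces of simple disks inside $E_{r_0+\varepsilon}$ proved in \S\ref{sb:reg-J_R}, and the standard pearl transversality on $(\Sigma,L)$ for generic $J_\Sigma$ and generic Morse data. First I would invoke Proposition~\ref{p:stretching-neck} and Corollary~\ref{c:proj-pearly} — whose proof applies verbatim to trajectories of index $\delta\le0$, since it uses only that the limiting configuration on $\Sigma$ has nonnegative virtual dimension while a genuine sphere bubble in $W^-$ strictly raises it — to conclude that, for $R$ large, every disk $u$ occurring in a trajectory in $\mathcal{P}(\widetilde x,\widetilde y,\mathbf{A};\widetilde{\data}_\varepsilon)$ with $\delta\le0$ is contained in $E_{r_0+\varepsilon}$, and that the whole trajectory projects under $\pi$ to a genuine pearly trajectory on $(\Sigma,L)$. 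On $E_{r_0+\varepsilon}$ the projection is $(J_R,J_\Sigma)$-holomorphic, so $u_\Sigma=\pi\circ u$ is $J_\Sigma$-holomorphic; and if $u_\Sigma$ were constant then $u$ would be a holomorphic disk inside a fibre $\mathbb{C}$ with boundary on the circle $\{|z|=r_0\}$, forcing (by the maximum principle) $|u|\le r_0$, which contradicts $u(\mathrm{Int}\,D)\subset W\setminus E_{r_0}$ unless $u$ is constant. Thus $u_\Sigma$ is non-constant whenever $u$ is.

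For part (1) I would argue by projection. If some $u_j$ factored as $w\circ\psi$ with $w$ simple and $\deg\psi\ge2$, then $w_\Sigma=\pi\circ w$ is non-constant, so $u_{j,\Sigma}=w_\Sigma\circ\psi$ is a non-trivial multiple cover; and if two of the disks had equal image, so would their $\pi$-images in $\Sigma$. Since the $\pi$-projection of the configuration is a genuine pearly trajectory on $(\Sigma,L)$ of index at most $1$ (the Maslov index is preserved by Proposition~\ref{P:monotone-gl}, while $|\widetilde x|\in\{|x_\Sigma|,|x_\Sigma|+1\}$ and similarly for $\widetilde y$), the structure theorem for such configurations for generic $J_\Sigma$ (Definition~3.1.1 and the accompanying transversality statement in \cite{Bi-Co:qrel-long}) rules out non-trivial covers and repeated images; pulling back, the $u_j$ are simple and absolutely distinct.

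For part (2) the point is that $J_R$ is not free, so rather than perturbing $J$ I would keep $J_\Sigma$ — hence, by \S\ref{sb:reg-J_R}, the smooth manifold $\mathcal{M}^*(\mathbf{A},J_R)$ together with its evaluation $ev_{\mathbf{A}}$ — fixed, and instead put $\widetilde R$ in general position by varying the Morse data $(f,(\cdot,\cdot))$ on $L$ and the auxiliary vertical data (the $S^1$-Morse functions $h_i$, the trivializations $\tau_i$, and the cutoffs $\alpha_i$). Using the holomorphic splitting $u\leftrightarrow(u_\Sigma,u_{\mathcal N})$ over $E_{r_0+\varepsilon}$, the composition $\pi^{\times2l}\circ ev_{\mathbf{A}}$ is the $\Sigma$-evaluation $ev^{\Sigma}_{\pi_*\mathbf{A}}$, which for generic $(f,(\cdot,\cdot),J_\Sigma)$ is transverse to the corresponding $R\subset L^{\times2l}$ by the pearl theory on $\Sigma$; and $\widetilde R$ fibres over $R$ compatibly with $\pi^{\times2l}$, since $W^s_{\widetilde x}$, $W^u_{\widetilde y}$ and the flow graphs $Q_{\widetilde{X}_\varepsilon}$ of the almost-gradient lift project to $W^s_{x_\Sigma}$, $W^u_{y_\Sigma}$, $Q_f$. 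It therefore remains to obtain transversality in the vertical $S^1$-directions, which is where the freedom in $h_i,\tau_i,\alpha_i$ is used: perturbing them moves the fibre components of $W^s_{\widetilde x},W^u_{\widetilde y},Q_{\widetilde{X}_\varepsilon}$ freely near the relevant points, while the fibre maps $u_{\mathcal N}$ are unobstructed and their evaluations at $\pm1$ are submersive in those directions; a Sard–Smale argument over this finite-dimensional package of data then produces a residual set for which $ev_{\mathbf{A}}$ is transverse to $\widetilde R$.

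Once (1) and (2) hold, $\mathcal{P}(\widetilde x,\widetilde y,\mathbf{A};\widetilde{\data}_\varepsilon)=ev_{\mathbf{A}}^{-1}(\widetilde R)/H^{\times l}$ is a smooth manifold of dimension $\delta$; hence it is empty when $\delta<0$, and when $\delta=0$ the standard compactness for pearly trajectories \cite{Bi-Co:qrel-long,Bi-Co:rigidity}, combined with $N_{\Gamma_L}=N_L\ge2$ — so that disk or sphere bubbling would leave a configuration of strictly smaller, hence negative, index, and breaking of a flow line would split it into two nonempty pieces whose indices sum to $-1$ — shows it is compact and therefore finite. The hard part will be the vertical transversality in part (2): since $J_R$ cannot be perturbed freely, this has to be extracted from the auxiliary Morse and connection data alone, using the base–fibre holomorphic splitting and the fact that the fibrewise $\bar{\partial}$-problem is unobstructed.
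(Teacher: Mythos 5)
Parts (1) and the compactness argument match the paper's strategy (project an index-$\le 1$ trajectory to $(\Sigma,L)$ and pull back the structure theorem), so those are fine. But part (2) has a genuine gap.

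The difficulty is that the vertical freedom is much smaller than you assert. For a fixed $J_\Sigma$-holomorphic disk $u_\Sigma$, Lemma~\ref{l:lift-a-disk} shows the set of lifts $\tilde u$ is a one-dimensional $S^1$-family: specifying $\tilde u(-1)$ on the fibre circle determines $\tilde u(1)$ (in fact the lift varies by an overall phase). So the vertical image of $ev_{\mathbf{A}}$ over a fixed projected configuration is the diagonal $\{(r_1,r_1,\dots,r_l,r_l)\}\subset\mathbb{R}^{2l}$, not all of $\mathbb{R}^{2l}$ — exactly the subspace the paper calls $V_1$. Your claim that ``the fibre maps $u_{\mathcal{N}}$ are unobstructed and their evaluations at $\pm1$ are submersive in those directions'' is false as stated: the fibre $\bar\partial$-problem on $(D,\partial D)\to(\mathbb{C},|z|=h(\zeta))$ is unobstructed, but the induced evaluation from the $S^1$-worth of lifts to the two boundary circles lands on a diagonal. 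Likewise, the $S^1$-symmetry of the almost-gradient flow only gives a sub-diagonal $V_2$ inside $T\widetilde{R}$ (with nonzero slopes $k_i$), and perturbing $h_i,\tau_i,\alpha_i$ does not move $Q_{\widetilde X_\varepsilon}$ ``freely'' in the fibres — these are local perturbations near $\Crit f$.

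Because of this, transversality of $ev_{\mathbf{A}}^{*,d}$ to $\widetilde{R}$ does not follow automatically from the $\Sigma$-transversality plus a generic Sard--Smale wave of the hand; one has to track exactly which vertical directions are covered. The paper does this by a case analysis on the type of the endpoints: if $\tilde x=x'$ or $\tilde y=y''$, the stable/unstable manifold contributes the missing fibre direction $V_3$ at the appropriate end, and then $V_1+V_2+V_3$ spans the vertical $\mathbb{R}^{2l}$, which (via an elementary kernel-containment observation) reduces the transversality statement to the already-known one on $(\Sigma,L)$. The genuinely delicate case is $\tilde x=x''$, $\tilde y=y'$, where neither stable nor unstable manifold supplies a fibre direction at the endpoints. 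There the paper uses the $S^1$-family $\widetilde R^\circ=\{e^{i\theta}\cdot W^s_{\tilde x}\}_\theta\times\cdots$, shows $\widetilde R^\circ$ is transverse, and applies Sard to conclude that a generic $\theta_0$-slice is; moving $\theta_0$ corresponds to perturbing the relevant $h_i$. Your proposal never identifies the $x''\to y'$ configuration as the problematic one and does not explain why perturbing $(h_i,\tau_i,\alpha_i)$ is actually sufficient there, so as written the transversality claim of part (2) is not justified.

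Your final step — that (1) and (2) yield smooth manifolds of dimension $\delta$, emptiness for $\delta<0$, and compactness for $\delta=0$ via the standard analysis of limits and the negative-index vanishing just established — is correct and agrees with the paper.
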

Recall that by the results of~\cite{Bi-Co:qrel-long}, for a generic
choice of data $\data$, the same result as in
Proposition~\ref{p:transversality-pearl} holds for $(\Sigma,L)$
whenever the virtual dimension $\delta(x,y, \pi_*(A))$ is $\leq 1$.
The main point in Proposition~\ref{p:transversality-pearl} is that
this continues to hold for also for $(W,\Gamma_L)$ even if one uses
the (apriori non-generic) data $\widetilde{\data}_{\varepsilon}$ which
depends on $\data$. We remark however that in contrast to
$(\Sigma,L)$, for $(W,\Gamma_L)$ we have to restrict only to pearly
trajectories of index $0$. The reason is that the proof goes by
comparing the transversality of $ev_{\mathbf{A}}$ (for $(W, \Gamma_L)$
with that of $ev_{\pi_*(\mathbf{A})}$ (for $(\Sigma, L)$). If $\gamma$
is a pearly trajectory on $(W, \Gamma_L)$ of index $\delta(\gamma)$
then the index $\delta(\pi(\gamma))$ of its projection satisfies:
$\delta(\pi(\gamma)) \leq \delta(\gamma) + 1$, where equality might
occur. Thus if $\delta(\gamma)=1$ we might have
$\delta(\pi(\gamma))=2$ and transversality for index $2$ trajectory is
not known. Therefore, we restrict on $(W,\Gamma_L)$ to spaces of
virtual dimension $0$ only. However, as we will see
in~\S\ref{sb:well-def} this is enough for our purposes.

\begin{proof}[Proof of Proposition~\ref{p:transversality-pearl}]
   In view of Proposition~\ref{p:stretching-neck} we may assume that
   all disks involved in pearly trajectories corresponding to
   $\textnormal{Image\,}(ev_{\mathbf{A}}) \cap \widetilde{R}$ lie
   inside $E_{r_0+\varepsilon}$. Therefore we can project all pearly
   trajectories from $\mathcal{P}(\tilde{x},\tilde{y}, \mathbf{A};
   \widetilde{\data}_{\varepsilon})$ and obtain pearly trajectories on
   $(\Sigma, L)$. We will also view each of the classes $A_i$ in
   $\mathbf{A}$ as elements of $H_2(E_{r_0+\varepsilon}, \Gamma_L)$.
   An important point that will be used a few times in the proof below
   is that if $\gamma \in \mathcal{P}(\Gamma_L; \tilde{x}, \tilde{y},
   A; \widetilde{D}_{\varepsilon})$ has index $0$ then its projection
   $\pi(\gamma)$ to $\Sigma$ has index $\leq 1$. Therefore, if $\data$
   is generic then $\pi(\gamma)$ consists only of simple and
   absolutely distinct disks and moreover we have transversality for
   $ev_{\pi_*(\mathbf{A})}$.

   Denote by $ev^*_{\mathbf{A}}$ the restriction of $ev_{\mathbf{A}}$
   to $\mathcal{M}^*(\mathbf{A},J_R)$. Write
   $$\mathcal{M}^{*,d}(\mathbf{A},J_R) 
   \subset \mathcal{M}^*(\mathbf{A},J_R)$$ for the open subset of
   those tuples $\mathbf{u} = (u_1, \ldots, u_l)$ which consist of
   {\em absolutely distinct disks} in the sense
   of~\cite{Bi-Co:qrel-long} (see Definition 3.1.1 there).
   (Absolutely distinct means roughly speaking that no disk $u_i$ has
   its image entirely covered by the union of the rest of the disks,
   i.e.  that $u_i(D) \not \subset \cup_{j \neq i} u_j(D)$ for every
   $i$.)  Denote by $ev^{*,d}_{\mathbf{A}}$ the restriction of
   $ev_{\mathbf{A}}$ to the latter subspace. Note that by the
   discussion in~\S\ref{sb:reg-J_R} both
   $\mathcal{M}^*(\mathbf{A},J_R)$ and
   $\mathcal{M}^{*,d}(\mathbf{A},J_R)$ are smooth manifolds for a
   generic admissible $J_R$.

   The first step of the proof is to show that $ev^{*,d}_{\mathbf{A}}$
   is transverse to $\widetilde{R}$.

   Let $\bold{q} = (q_1, \ldots, q_{2l}) \in \Gamma_L^{\times 2l}$
   belong to the intersection of $\text{Image}(ev^{*,d}_\bold{A})$ and
   $\widetilde{R}$.  Pick a sequence of disks $\bold{\tilde{u}} =
   (\tilde{u}_1, \ldots, \tilde{u}_l) \in \mathcal{M}^{*,d}(\bold{A},
   J_R)$ such that $ev^{*,d}_\bold{A}(\bold{\tilde{u}}) = \mathbf{q}$.
   We denote the projections $\pi(\bold{\tilde{u}}), \pi(\tilde{x}),
   \pi(\tilde{y})$ to $\Sigma$ by $\bold{u} = (u_1, \ldots, u_l), x$
   and $y$, respectively.

   The proof goes by comparison of $ev^{*,d}_{\mathbf{A}}$ and
   $\widetilde{R}$ with their counterparts in $(\Sigma, L)$ namely
   $ev^{*,d}_{\pi_*A}$ and $R = W^s_{x} \times (Q_{-\nabla f})^{\times
     (l-1)} \times W^u_{y}$, which are assumed to be transverse (due
   to a generic choice of $\mathscr{D}$).  Note that our choice of
   auxiliary data implies that $R = \pi(\widetilde{R})$.  Similarly,
   the lifting Lemma~\ref{l:lift-a-disk} (see~\S\ref{S:lifting} below)
   together with the projection property of $J_R$ ensure that
   $ev^{*,d}_{\pi_*A} = \pi(ev^{*,d}_A)$.

   At each $q_i$ we choose a splitting $T_{q_i}\Gamma_L \simeq
   H^\nabla_{q_i} \oplus \mathbb{R}$ where $H^\nabla$ denotes the
   horizontal distribution of the connection $\nabla$ and $\mathbb{R}$
   is the tangent space of the fiber. Then
   $T_\mathbf{q}\Gamma_L^{\times 2l} \simeq \bigoplus H^\nabla_{q_i}
   \oplus \mathbb{R}^{\times 2l}$ and the restriction $D\pi:\bigoplus
   H^\nabla_{q_i} \times \{0\} \to T_{\pi (\mathbf{q})}L^{\times 2l}$
   is an isomorphism. Using the splitting
   $T_\mathbf{q}\Gamma_L^{\times 2l} \simeq \bigoplus H^\nabla_{q_i}
   \oplus \mathbb{R}^{\times 2l}$ we introduce coordinates $(v, r_1,
   \ldots, r_{2l})$ on $T_\mathbf{q}\Gamma_L^{\times 2l}$ where $v \in
   \bigoplus H^\nabla_{q_i}$ and $r_k \in \R$.

   By Lemma~\ref{l:lift-a-disk} $J_\Sigma$-holomorphic disks $u:(D^2,
   \partial D^2) \to (\Sigma, L)$ correspond to one-parametric
   families of disks $\tilde{u}:(D^2,\partial D^2) \to
   (E_{r_0+\varepsilon}, \Gamma_L)$ which are parametrized by $S^1$.
   More exactly, if $\tilde{u}$ is one such lift, then the others are
   given by rotations $\{e^{i \theta} \cdot \tilde{u}\}$ in the fibers
   of $E_{r_0+\varepsilon}$.  Therefore, $\mathcal{M}^*(\bold{A},
   J_R)$ admits an $(S^1)^{\times l}$ action $G$ which corresponds to
   independent rotation of the lifts of each disk $u_k$. This implies
   that $ev^{*,d}_\bold{A} (G \, \bold{\tilde{u}}) \subseteq
   \text{Image}(ev^{*,d}_\bold{A})$. Consequently, $V_1 = T_\mathbf{q}
   ev^{*,d}_\bold{A} (G \, \bold{\tilde{u}}) \subseteq T_\mathbf{q}
   \text{Image}(ev^{*,d}_\bold{A})$. Note, that $V_1 = \{0\} \times
   \{(r_1, r_1, r_2, r_2, \ldots, r_l, r_l)\}_{r_i \in \mathbb{R}}$ in
   the coordinates described above.  On the other hand , each
   $Q_{\widetilde{X}_\varepsilon}$ also admits a similar $S^1$-action.
   This gives rise to an $(S^1)^{\times (l-1)}$-action on
   $\widetilde{R}$ which implies that $V_2 = \{0\} \times \{(0, r_1,
   k_1 r_1, \ldots, r_{l-1}, k_{l-1} r_{l-1}, 0)\}_{r_i \in
     \mathbb{R}} \subseteq T_\mathbf{q}\widetilde{R}$ (The constants
   $k_i$ are equal to $1$ in the case when the corresponding gradient
   trajectory segment does not pass through any neighbourhood
   $\mathcal{U}$ of a critical point. In the case when it does, we
   still have $k_i \neq 0$.)

   Now we analyze the possible configurations of the critical points
   $\tilde{x}, \, \tilde{y}$. Below we will use the following
   observation: let $\pi : U_1 \to U_2$ be a surjective linear map.
   Let $V$ be a linear subspace of $U_1$.  Assume that $\ker (\pi)
   \subseteq V$. Then $V = U_1$ if and only if $\pi(V) = U_2$.
   \begin{itemize}
     \item $\tilde{x} = x'$. In this case $T_{q_1} W^s_{\tilde{x}}$
      contains the subspace $\{0\} \times \mathbb{R}$, therefore $V_3
      = \{0\} \times \{(r, 0, \ldots, 0)\}_{r \in \mathbb{R}}
      \subseteq T_\mathbf{q}\widetilde{R}$. We now have: $V_1 + V_2 +
      V_3 = \{0\} \times \mathbb{R}^{\times 2l} \subseteq
      T_\mathbf{q}\text{Image}(ev^{*,d}_\bold{A}) +
      T_\mathbf{q}\widetilde{R}$.  That is, the right-hand sum
      contains the complementary subspace $\{0\} \times
      \mathbb{R}^{\times 2l}$ which is the kernel of the projection
      $\pi : T_\mathbf{q} \Gamma_L^{2l} \to
      T_{\pi(\mathbf{q})}L^{2l}$.  The observation above implies that
      in this case the intersection is transverse if and only if the
      same is true for the respective projections
      $\text{Image}(ev^{*,d}_{\pi_{*}\bold{A}})$ and $R$. The latter
      are assumed to be transverse by a generic choice of the data
      $\data$ on $(\Sigma, L)$.
     \item $\tilde{y} = y''$. In this case $T_{q_1} W^u_{\tilde{y}}$
      contains the subspace $\{0\} \times \mathbb{R}$, therefore $V_3
      = \{0\} \times \{(0, \ldots, 0, r)\}_{r \in \mathbb{R}}
      \subseteq T_\mathbf{q}\widetilde{R}$. Once again, $V_1 + V_2 +
      V_3 = \{0\} \times \mathbb{R}^{\times 2l}$.  Using the same
      argument as before, we conclude that $ev^{*,d}_{\bold{A}}$ and
      $\widetilde{R}$ are transverse whenever their projections on
      $\Sigma$ are.
     \item The only case left to consider is $\tilde{x} = x''$ and
      $\tilde{y} = y'$.
   
      We denote $\widetilde{R}^\circ = \{e^{i \theta} \cdot
      W^s_{\tilde{x}}\}_{\theta \in [0, 2\pi]} \times
      (Q_{\widetilde{X}_\varepsilon})^{\times (l-1)} \times
      W^u_{\tilde{y}}$. Using argument similar to that in the previous
      cases, one can show that $\widetilde{R}^\circ$ intersects
      $\text{Image}(ev^{*,d}_\bold{A})$ in transverse way. Therefore
      $K = \widetilde{R}^\circ \cap \text{Image}(ev^{*,d}_\bold{A})$
      is a finite-dimensional manifold. It follows from a version of
      Sard's theorem, that for almost all values of $\theta_0$,
      $\widetilde{R}^{\theta_0} = \{e^{i \theta_0} \cdot
      W^s_{\tilde{x}}\} \times (Q_{\widetilde{X}_\varepsilon})^{\times
        (l-1)} \times W^u_{\tilde{y}}$ has transverse intersection
      with $\text{Image}(ev^{*,d}_\bold{A})$. Thus we can avoid
      non-transversality by a small perturbation of $x''$ in its fiber.
      Such perturbation corresponds to a perturbation of the
      appropriate Morse function $h_i$ as defined
      in~\S\ref{S:short-exact}.  For generic choice of functions
      $\{h_i\}$ this non-transversality phenomenon will not occur.
   \end{itemize}
   This concludes the proof that $ev^{*,d}_{\mathbf{A}}$ is transverse
   to $\widetilde{R}$.

   Next we claim that ${ev_{\mathbf{A}}}^{-1}(\widetilde{R}) =
   {ev^{*,d}_{\mathbf{A}}}^{-1}(\widetilde{R})$, namely that all
   tuples $\mathbf{u} = (u_1, \ldots, u_l)$ that participate in
   $\mathcal{P}(\tilde{x},\tilde{y}, \mathbf{A};
   \widetilde{\data}_{\varepsilon})$ consist of simple absolutely
   distinct disks. This can be done either by repeating the arguments
   from Section~3 of~\cite{Bi-Co:qrel-long} or alternatively by
   looking at the projection $\pi(\gamma)$ of $\gamma$ to $(\Sigma,
   L)$. Indeed, if the disks in $\gamma$ are either non-simple or not
   absolutely distinct then the same would hold for the disks in
   $\pi(\gamma)$ too. However, this is not the case for $\pi(\gamma)$
   since for a generic $\data$ all disks in pearly trajectories of
   index $\leq 1$ on $(\Sigma, L)$ must be simple and absolutely
   distinct (see Proposition~3.1.3 in~\cite{Bi-Co:qrel-long}).

   Finally, the fact that $\mathcal{P}(\tilde{x},\tilde{y},
   \mathbf{A}; \widetilde{\data}_{\varepsilon})$ is compact when
   $\delta(\tilde{x}, \tilde{y}, \mathbf{A})=0$ can be proved in a
   similar way as in Section~3 of~\cite{Bi-Co:qrel-long}. One analyzes
   all possible apriori limits of sequence of pearly trajectories from
   $\mathcal{P}(\tilde{x},\tilde{y}, \mathbf{A};
   \widetilde{\data}_{\varepsilon})$ and deduces that those
   configurations that do not appear in
   $\mathcal{P}(\tilde{x},\tilde{y}, \mathbf{A};
   \widetilde{\data}_{\varepsilon})$ belong to moduli spaces of
   virtual dimension $<0$. But such spaces must be void due to the
   transversality result we have just proved.
\end{proof}

\subsection{Well-definedness of the pearl complex
  $\mathcal{C}(\Gamma_L; \widetilde{\data}_{\varepsilon})$}
\label{sb:well-def}

Having established transversality for the moduli spaces
$\mathcal{P}(\Gamma_L; \tilde{x}, \tilde{y}, \mathbf{A};
\widetilde{\data}_{\varepsilon})$ whenever $\delta(\tilde{x},
\tilde{y}, \mathbf{A})=0$ we are ready to prove that
$\mathcal{C}(\Gamma_L; \widetilde{\data}_{\varepsilon})$ is well
defined and its cohomology is isomorphic to $QH(\Gamma_L)$. This is
done as follows.

First note that due to Proposition~\ref{p:transversality-pearl} the
pearly differential $\widetilde{d}$ on $\mathcal{C}(\Gamma_L;
\widetilde{\data}_{\varepsilon})$ is well defined as an operator.
(Note however that as we have not established transversality for
$1$-dimensional moduli spaces we apriori do not yet that
$\widetilde{d} \circ \widetilde{d}=0$.)

Let $\widetilde{\data}'_{\varepsilon}=(f'_{\varepsilon},
X'_{\varepsilon}, J')$ be a small and generic perturbation of the data
$\widetilde{\data}_{\varepsilon}$ where $f'_{\varepsilon} =
f_{\varepsilon}$, $(f_{\varepsilon}, X'_{\varepsilon})$ is negative almost gradient 
and $J'$ is not necessarily admissible (hence can be
taken to be really generic). Denote by $\widetilde{d}'$ the pearly
differential of $\mathcal{C}(\Gamma_L;
\widetilde{\data}'_{\varepsilon})$. By the general
theory~\cite{Bi-Co:qrel-long, Bi-Co:rigidity}, $\widetilde{d}'$ is
indeed a differential and $$H^*(\mathcal{C}(\Gamma_L;
\widetilde{\data}'_{\varepsilon}), \widetilde{d}') \cong
QH^*(\Gamma_L).$$ Clearly $\mathcal{C}^*(\Gamma_L;
\widetilde{\data}'_{\varepsilon}) = \mathcal{C}^*(\Gamma_L;
\widetilde{\data}_{\varepsilon})$ as graded vector spaces.  Finally,
the transversality result of Proposition~\ref{p:transversality-pearl}
together with standard arguments imply that $\widetilde{d} =
\widetilde{d}'$ which proves our claim.

\section{Lifting pearly trajectories} \label{S:lifting} Denote by
$\mathcal{N} \to \Sigma$ the normal bundle of $\Sigma$ in $M$, viewed
as a complex line bundle as in~\S\ref{Sb:std-disk-bundle}. We identify
$\Sigma$ with the zero section of $\mathcal{N}$. We use the connection
$\nabla$ on $\mathcal{N}$ to define an almost complex structure
$J_{\mathcal{N}}$ on the total space of $\mathcal{N}$, as was done at
the beginning of~\S\ref{S:stretching} (see~\eqref{Eq:J-def-1} there,
where the almost complex structure was denoted by $J_M$).

In this section we show that any pearly trajectory on $(\Sigma, L)$
(with respect to $\mathscr{D} = (f, (\cdot, \cdot), J_{\Sigma})$
admits a lift to $(\mathcal{N} \setminus \Sigma, \Gamma_L)$ with
respect to the corresponding data $\widetilde{\data}_\varepsilon =
(f_\varepsilon, X_\varepsilon, J_{\mathcal{N}})$. Due to compactness
properties such lifts are contained in a certain disk bundle of
$\mathcal{N}$, hence using the identification $(E_{A(R)}, J_\mathcal{N}) \to (E_{r_0 +
  \varepsilon}, J_R)$ induced by $\lambda_R$, one obtains the same result for $(W, \Gamma_L; J_R)$
(under assumption that the stretching parameter $R$ is large enough).

Moreover, the set of lifts of any non-constant trajectory is
parametrized by $S^1$. Having specified appropriate boundary
conditions, one obtains a unique lift, hence in the view of the
projection property established in Corollary~\ref{c:proj-pearly}
in~\S\ref{S:stretching} there is one-to-one correspondence between
index $0$ pearly trajectories in $(\Sigma, L)$ and those on $(W,
\Gamma_L)$. More precisely, we will see that for any $x, y \in
\text{Crit} (f)$ and $A \in \pi_2 (\Sigma, L)$ such that $|y| - |x| +
\mu(A) - 1 = 0$, we have:
\[
\# \mathcal{P} (x, y, A; \mathscr{D}) = \# \mathcal{P} (x', y',
\pi_*^{-1}(A); \widetilde{\data}_{\varepsilon}) = \# \mathcal{P} (x'',
y'', \pi_*^{-1}(A);\widetilde{\data}_{\varepsilon}).
\]
Here by $\pi_*$ we mean the homomorphism $\pi_* : \pi_2(W \setminus
\Delta,\Gamma_L) \to \pi_2(\Sigma,L)$ which is an isomorphism
(see~\eqref{eq:pi_*} before Proposition~\ref{P:monotone-gl}), hence it
makes sense to write $\pi_*^{-1}$.

\subsection{Lifting of disks} \label{S:lift-disks}
\begin{lem} \label{l:lift-a-disk} Let $u: (D^2, \partial D^2) \to
   (\Sigma, L)$ be a $J_\Sigma$-holomorphic disk. Given $\xi \in
   \partial D^2$ and $\tilde{p} \in \Gamma_L \cap \pi^{-1}(u (\xi))$
   there is a unique lift $\tilde{u}: (D^2, \partial D^2; i) \to
   (\mathcal{N} \setminus \Sigma, \Gamma_L; J_\mathcal{N})$ of $u$
   such that $\tilde{u} (\xi) = \tilde{p}$.
\end{lem}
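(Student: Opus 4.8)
The plan is to reformulate the lifting problem as finding a nowhere-zero holomorphic section of the complex line bundle $u^*\mathcal{N}\to D^2$ with prescribed boundary modulus, and then to solve the latter by the classical Dirichlet problem on the disk. A lift $\tilde u$ of $u$ (i.e.\ a map into $\mathcal{N}$ with $\pi\circ\tilde u=u$) is the same datum as a section $s$ of $u^*\mathcal{N}$ over $D^2$, with $\tilde u$ mapping into $\mathcal{N}\setminus\Sigma$ exactly when $s$ is nowhere zero and into $\Gamma_L$ along $\partial D^2$ exactly when $|s|=r_0$ there. Pulling back the connection $\nabla$ gives $u^*\nabla$, whose $(0,1)$-part is a Cauchy--Riemann operator $\bar\partial_{u^*\nabla}$; since $D^2$ is a surface this operator automatically squares to zero, so $(u^*\mathcal{N},\bar\partial_{u^*\nabla})$ is an honest holomorphic line bundle over $D^2$. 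The key structural point I would establish is that, because $J_{\mathcal{N}}$ was defined in~\eqref{Eq:J-def-1} so that the horizontal distribution $H^\nabla$ projects $\C$-linearly onto $(T\Sigma,J_\Sigma)$ and the fibres carry multiplication by $i$, and because $u$ is $J_\Sigma$-holomorphic, the lift $\tilde u$ is $(i,J_{\mathcal{N}})$-holomorphic if and only if $\bar\partial_{u^*\nabla}s=0$. This is checked by decomposing $d\tilde u$ into its $H^\nabla$- and vertical components: the horizontal component is the horizontal lift of $du$, hence $\C$-linear; the vertical component is the covariant derivative $\nabla^{u^*\nabla}s$, whose $\C$-antilinear part vanishing is precisely $\bar\partial_{u^*\nabla}s=0$.

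Next I would trivialize $u^*\mathcal{N}$ holomorphically over the closed disk. This is possible since $\overline{D^2}$ is a Stein compact; concretely, take any smooth nowhere-zero section $s_0$, write $\bar\partial_{u^*\nabla}s_0=\beta\,s_0$ for a smooth $(0,1)$-form $\beta$, solve $\bar\partial\phi=\beta$ with $\phi$ smooth up to $\partial D^2$ (classical, via the Cauchy transform after extending $\beta$ smoothly across $\partial D^2$ with compact support), and use $e^{-\phi}s_0$ as holomorphic frame. Under the resulting trivialization $u^*\mathcal{N}\cong D^2\times\C$, holomorphic sections become holomorphic functions $g:D^2\to\C$, and the hermitian metric takes the form $|v|_z^2=h(z)|v|^2$ for a smooth positive function $h$ on $\overline{D^2}$. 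Hence a lift into $\mathcal{N}\setminus\Sigma$ with boundary on $\Gamma_L$ corresponds exactly to a nowhere-zero holomorphic $g$ on $D^2$ with $|g|=\rho$ on $\partial D^2$, where $\rho:=r_0\,h^{-1/2}$ is a given smooth positive function on $\partial D^2$.

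For existence I would let $v$ be the harmonic (Poisson) extension of $\log\rho$ to $D^2$, pick a harmonic conjugate $w$ of $v$ on the simply-connected $D^2$, and set $g_0=e^{v+iw}$: this is holomorphic and nowhere zero on $D^2$, smooth up to $\partial D^2$ by elliptic regularity for the Dirichlet problem with smooth data, and $|g_0|=e^v=\rho$ on $\partial D^2$. For uniqueness up to fibrewise rotation: if $g$ is any nowhere-zero holomorphic function on $D^2$ with $|g|=\rho$ on $\partial D^2$, then $B:=g/g_0$ is holomorphic and nowhere zero, so $\log|B|$ is harmonic on $D^2$ and vanishes on $\partial D^2$, hence $\log|B|\equiv 0$; a holomorphic function of constant modulus is constant, so $B\equiv e^{i\theta}$. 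Thus the lifts of $u$ form the $S^1$-family $\{e^{i\theta}g_0\}_{\theta\in\R/2\pi\Z}$ (note the construction covers a constant $u$ as well, the family then being the fibre circle itself), and the condition $\tilde u(\xi)=\tilde p$, which prescribes a point of the radius-$r_0$ circle in the fibre over $u(\xi)$ and hence the value $g(\xi)$, pins down $\theta$ uniquely. This gives the unique lift claimed.

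The step I expect to demand the most care is the first one: verifying precisely why $(i,J_{\mathcal{N}})$-holomorphicity of the lift is equivalent to holomorphy of the corresponding section of $(u^*\mathcal{N},\bar\partial_{u^*\nabla})$, and then tracking the metric distortion $h$ so that the $\Gamma_L$-boundary condition turns into a clean prescribed-modulus problem. The remaining analytic ingredients --- solving $\bar\partial$ on the disk and the Dirichlet problem with smooth boundary data --- are entirely classical harmonic-function theory on the two-dimensional disk, as the acknowledgments indicate.
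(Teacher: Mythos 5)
Your proof follows essentially the same route as the paper's: trivialize $u^*\mathcal{N}$ holomorphically over the disk, reduce the lifting problem to finding a nowhere-zero holomorphic function on $D^2$ with prescribed boundary modulus, solve it via the Dirichlet problem plus a harmonic conjugate, and fix the residual $S^1$-rotation by the point condition $\tilde u(\xi)=\tilde p$. The only difference is one of exposition: the paper simply asserts the existence of the holomorphic trivialization and leaves implicit the identification between $(i,J_\mathcal{N})$-holomorphic lifts and $\bar\partial_{u^*\nabla}$-holomorphic sections, while you spell both of these out (constructing the trivialization by solving a $\bar\partial$-equation, and checking the equivalence via the horizontal/vertical splitting of $d\tilde u$); your uniqueness step via harmonicity of $\log|g/g_0|$ is the same argument as the paper's appeal to the maximum principle.
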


\begin{proof}
   The pull back bundle $(u^* \mathcal{N}, u^*J_\mathcal{N}) \to D$
   admits a holomorphic trivialization as $(D^2\times \mathbb{C},
   J_0)$ where $J_0$ acts by multiplication by $i$ in both
   coordinates.  Under this trivialization $u^*
   \Gamma_L\big|_{u(\partial D^2)}$ corresponds to a circle bundle
   $\Gamma_u = \left\{ (\zeta, q) \in \partial D^2 \times \mathbb{C}
      \; \big| \; |q| = h (\zeta)\right\}$ where $h:\partial D^2 \to
   \mathbb{R}_{>0}$ is a smooth function measuring the radius of the
   unit circle in $\mathcal{N}$ (in the original hermitian metric on
   $\mathcal{N}$) with respect to the trivialization.
	
   In the trivialization above any lift of $u$ is given by $\tilde{u}
   = (u, \Psi)$ with holomorphic $\Psi : D^2 \to \mathbb{C}$ which
   satisfies the following conditions:
   \begin{itemize}
     \item $\Psi (z) \neq 0$ for all $z \in D^2$
     \item $|\Psi (\zeta)| = h (\zeta)$ for any $\zeta \in \partial
      D^2$
     \item $\Psi (\xi) = p$ where $(\xi, p)$ is the image of
      $\tilde{p}$ in our trivialization
   \end{itemize}
   
   In order to show existence of $\tilde{u}$, we take $g: D^2 \to
   \mathbb{R}$ to be the harmonic function which solves Dirichlet
   problem with boundary conditions $g (\zeta) = \log(h (\zeta))$.
   Denote by $f$ its harmonic conjugate. Then $\Psi_0 = e^{g + if}$ is
   a holomorphic function which satisfies the first two conditions.
   Its rotation $\Psi = \frac{p}{\Psi_0(\xi)} \Psi_0$ is a function
   which fulfills all the three conditions.
	
   For uniqueness we argue that if $\tilde{u}_1 = (u, \Psi_1)$,
   $\tilde{u}_2 = (u, \Psi_2)$ are two lifts, then $\varphi =
   \frac{\Psi_1}{\Psi_2}$ is a holomorphic function $D^2 \to
   \mathbb{C}$ {\em without zeros} which satisfies $|\varphi(\zeta)| =
   1$ for all $\zeta \in
   \partial D^2$. A simple application of the maximum principle shows
   that it must be constant. We note that $\varphi(\xi) = 1$,
   therefore $\varphi \equiv 1$.
\end{proof}

\subsection{Lifting of pearly trajectories} \label{S:lift-traj} Let
$\gamma \in \mathcal{P} (x, y, A; \data)$ be a pearly trajectory.
Again, to simplify the notation we assume without loss of generality
that $\gamma$ consists of a single disk $u$ and two gradient
trajectories $(\gamma_0, \gamma_1)$. Pick an arbitrary point $p \in
\textnormal{Image\,} \gamma_0$. We claim that {\sl for any $\tilde{p}
  \in \pi^{-1}(p) \cap \Gamma_L$ there is a unique lift
  $\tilde{\gamma} \in \mathcal{P} (\tilde{x}, \tilde{y},
  \pi_*^{-1}(A); \widetilde{\data}_{\varepsilon})$ of $\gamma$, which
  consists of a disk $\tilde{u}$ and $(\tilde{\gamma}_0,
  \tilde{\gamma}_1)$ such that $\tilde{p} \in \textnormal{Image\,}
  \tilde{\gamma}_0$.} (Here $\tilde{x}, \tilde{y}$ are critical points
lying in the fibers of $x, y$, and we cannot control in advance if
they will be of type $(\cdot)'$ or $(\cdot)''$.)

To prove this statement we note that there exists a unique lift
$\tilde{\gamma}_0$ of $\gamma_0$ to a trajectory along the flow of
$\widetilde{X}_\varepsilon$ which satisfies $\tilde{p} \in
\textnormal{Image\,} \tilde{\gamma}_0$.  Denote by $\tilde{\xi}$ the
endpoint of $\tilde{\gamma}_0$. Using Lemma~\ref{l:lift-a-disk} we
obtain a unique lift $\tilde{u}$ of $u$ with $\tilde{u}(-1) =
\tilde{\xi}$. Finally, there is a unique lift of $\gamma_1$ to a
gradient trajectory $\tilde{\gamma}_1$ which starts from
$\tilde{u}(1)$.

Thus all lifts of $\gamma$ are parametrized by the circle $\pi^{-1}(p)
\cap \Gamma_L$.  It is easy to see that exactly one such lift
$\gamma''$ starts from $x''$ and exactly one (we denote it by
$\gamma'$) ends at $y'$.  Assume that $\gamma$ has index $0$. Then by
dimension argument $\gamma''$ must end at $y''$. A similar argument
shows that $\gamma'$ must connect $x'$ to $y'$.

Other configurations of pearly trajectories are dealt in a similar
way: we pick a point $p$ on one of the gradient trajectory segments.
Then all lifts $\tilde{\gamma}$ of $\gamma$ are parametrized by the
lift $\tilde{p}$ of $p$.  In the case when $\gamma$ consists of a
single disk $u$ passing through critical points $x, y$, the lifts
$\tilde{\gamma}$ consist of the lift $\tilde{u}$ of $u$ together with
two gradient trajectories lying in the fibers above $x, y$. It is easy
to see that in this case too there is unique lift which connects $x'$
to $y'$, and one which connects $x''$ to $y''$.

Putting this together with Corollary~\ref{c:proj-pearly} we obtain:
\begin{equation}\label{Eq:lift-1}
   \# \mathcal{P} (x, y, A; \data) = 
   \# \mathcal{P} (x', y',
   \pi_*^{-1}(A); \widetilde{\data}_{\varepsilon}) = 
   \# \mathcal{P} (x'', y'', \pi_*^{-1}(A); \widetilde{\data}_{\varepsilon}).
\end{equation}
From dimension argument we get:
\begin{equation}\label{Eq:lift-2}
   \mathcal{P} (x'', y', \pi_*^{-1}(A); \widetilde{\data}_{\varepsilon}) 
   = \emptyset.
\end{equation}

\subsection{Chain property for $i$ and $p$} \label{S:chain-maps} We
are now finally ready to show that the maps $i$ and $p$ are chain
maps. We will denote by $d$ the differential on the pearl complex 
$\mathcal{C}(\data)$ for $(\Sigma,L)$ and by $\widetilde{d}$ the
differential of the pearl complex
$\mathcal{C}(\widetilde{\data}_{\varepsilon})$ for $(W, \Gamma_L)$
with the data $\widetilde{\data}_{\varepsilon}$ as constructed in the
previous sections.

Recall that $d : \mathcal{C}^*(\data) \to \mathcal{C}^{*+1}(\data)$ is
defined by:
\begin{equation*}
   dy = \sum_{x, A} \# 
   \pearlspace (x, y, A; \data) x 
   \, t^{\overline{\mu}(A)}.
\end{equation*}
Accordingly, for $\widetilde{d}:
\cplx^*(\widetilde{\data}_{\varepsilon}) \to
\cplx^{*+1}(\widetilde{\data}_{\varepsilon})$:
\begin{equation*}
   \widetilde{d} \, \widetilde{y}  = \sum_{\widetilde{x}, B} \# 
   \pearlspace (\widetilde{x}, \widetilde{y}, B; \widetilde{\data}_{\varepsilon})
   \widetilde{x} 
   \, t^{\overline{\mu}(B)}.
\end{equation*}
Recall also that we have an isomorphism $\pi_2(W \setminus
\Delta,\Gamma_L) \to \pi_2(\Sigma,L)$ induced by the projection $\pi:
W \setminus \Delta \to \Sigma$. Recall also that $\mu_{\Gamma_L}(B) =
\mu_L(\pi_*(B))$ for every $B \in \pi_2(W \setminus \Delta,\Gamma_L)$
(see Proposition~\ref{P:monotone-gl}). To simplify the notation, we
will write below $\mu$ for both $\mu_{\Gamma_L}$ and $\mu_L$.

From~\eqref{Eq:lift-1} and~\eqref{Eq:lift-2} we get:
\begin{equation}\label{Eq:lift-diff}
	\begin{array}{rcl}
           \widetilde{d} y' &=& \sum_{x', A} 
           \# \pearlspace (x', y', \pi^* A; \widetilde{\data}_{\varepsilon}) 
           x' \, t^{\overline{\mu}(A)} +  
           \sum_{x'', A} 
           \# \pearlspace (x'', y', \pi_*^{-1}(A); \widetilde{\data}_{\varepsilon}) 
           x'' \, t^{\overline{\mu}(A)} = \\
           &=& \sum_{x, A} \# \pearlspace (x, y, A; \data) 
           x' \, t^{\overline{\mu}(A)} +  
           \sum_{x, A} 0 \cdot x'' \, t^{\overline{\mu}(A)} ,\\
           \widetilde{d} y'' &=& \sum_{x', A} 
           \# \pearlspace (x', y'', \pi_*^{-1}(A); \widetilde{\data}_{\varepsilon}) 
           x' \, t^{\overline{\mu}(A)} +  
           \sum_{x'', A} 
           \# \pearlspace (x'', y'', \pi_*^{-1}(A); \widetilde{\data}_{\varepsilon}) 
           x'' \, t^{\overline{\mu}(A)} = \\
           &=& \sum_{x, A} 
           \# \pearlspace (x', y'', \pi_*^{-1}(A); \widetilde{\data}_{\varepsilon}) 
           x' \, t^{\overline{\mu}(A)} +  
           \sum_{x, A} \# \pearlspace (x, y, \pi_*^{-1}(A); \data) 
           x'' \, t^{\overline{\mu}(A)} .
   \end{array}
\end{equation}
These identities immediately imply that $i$ and $p$ are chain maps.
Indeed:
\begin{equation}\label{Eq:chain-maps}
   \begin{array}{rcl}
      \widetilde{d} \, i(y) &=& \widetilde{d} y' = \sum_{x, A} 
      \# \pearlspace (x, y, A; \data) 
      x' \, t^{\overline{\mu}(A)} = \\
      &=& i \left(\sum_{x, A} \# \pearlspace (x, y, A; \data) 
         x \, t^{\overline{\mu}(A)} \right) = i (d y) ,\\
      p(\widetilde{d} y') &=& p \left(\sum_{x, A} 
         \# \pearlspace (x, y, A; \data) 
         x' \, t^{\overline{\mu}(A)}\right) = 0 = 
      d(0) = d\, p (y') ,\\
      p(\widetilde{d} y'') &=& p \left(\sum_{x, A} 
         \# \pearlspace (x', y'', \pi_*^{-1}(A); \widetilde{\data}_{\varepsilon}) 
         x' \, t^{\overline{\mu}(A)}\right) + \\ 
      && + p \left(\sum_{x, A} \# \pearlspace (x, y, \pi_*^{-1}(A); \data) 
         x'' \, t^{\overline{\mu}(A)}\right) = \\
      &=& 0 + \sum_{x, A} \# \pearlspace (x, y, \pi_*^{-1}(A); \data) 
      x \, t^{\overline{\mu}(A)} = \diff y = \diff \, p (y'') .
   \end{array}
\end{equation}
\Qed

\section{Independence of auxiliary data} \label{S:aux-data} 

Let $\data^0 = (f_0, (\cdot, \cdot)_0, J_\Sigma^0)$ and $\data^1 =
(f_1, (\cdot, \cdot)_1, J_\Sigma^1)$ be two choices of auxiliary data
for the pearl complex of $L \subset \Sigma$. Denote by
$\widetilde{\data}^0_{\varepsilon}$ and
$\widetilde{\data}^1_{\varepsilon}$ corresponding choices of data for
$(W, \Gamma_L)$ as constructed in~\S\ref{S:short-exact}. Recall
from~\cite{Bi-Co:qrel-long, Bi-Co:qrel-long} that there exists a
comparison map
\[
\Phi^c_{\data^0, \data^1} : \mathcal{C}^* (\data^1) \longrightarrow
\mathcal{C}^* (\data^0)
\]
which is a chain map with respect to pearly differentials and induces
an isomorphism in cohomology $\Phi^h_{\data^0, \data^1} :
H^*(\mathcal{C}(\data^1)) \to H^*(\mathcal{C}(\data^0))$. We use here
the following convention. Maps with superscript $^c$ (e.g. $\Phi^c$)
denote chain maps, while superscript $^h$ indicates the induced map in
cohomology (e.g. $\Phi^h$ is the induced map in cohomology for
$\Phi^c$).

Note that while the maps $\Phi^c_{\data^0, \data^1}$ are not unique
they are uniquely defined up to cochain homotopy, hence the maps
$\Phi^h_{\data^0, \data^1}$ are canonical. An analogous comparison map
$\Phi^c_{\widetilde{\data}^0_{\varepsilon},
  \widetilde{\data}^1_{\varepsilon}}$ exists for the corresponding
pearl complexes of $\Gamma_L$.

The comparison maps are natural in the following sense: for any three
choices of data $\data^0, \data^1, \data^2$ we have
in cohomology:
$$\Phi^h_{\data^0, \data^1} \circ
\Phi^h_{\data^1, \data^2} = \Phi^h_{\data^0,
  \data^2}, \quad \Phi^h_{\data^0, \data^0} =
\text{Id}.$$

In this section we show that the chain maps $i$ and $p$ are compatible
with these comparison maps, hence after passage to cohomology they can
be viewed as canonical maps between the corresponding Lagrangian
quantum cohomologies.

\subsection{Construction of $\Phi^c_{\data^0, \data^1}$}
\label{sb:Phi-data}
We recall here the construction from~\cite{Bi-Co:qrel-long}.

Adding a positive constant to $f_1$, if necessary, we may assume that
$f_1(x) > f_0(x)$ for any $x \in L$.
Following~\cite{Co-Ra:Morse-Novikov}, Lemma 1.17, we pick a $C^\infty$
function $v : [0,1] \to [0, 1]$ which satisfies
\[
v(0) = 1; \; v(1) = 0; \; v'(0) = v'(1) = 0;
\]
\[
v'(t) < 0 \, (0 < t < 1); \; v''(0) < 0 < v''(1).
\]
and define $F : L \times [0, 1] \to \mathbb{R}$ by $F(x, t) =
v(t)f_0(x) + (1-v(t))f_1(x)$. We allow a small perturbation of $F$
away from the boundary of $L \times [0,1]$ in order to make the
construction generic. The function $F$ extends $f_i$ (viewed as
functions on the boundary components $L_i = L \times \{i\}$ of $L
\times [0,1]$) and has all critical points on the boundary. In fact, 
$$\textnormal{Crit}(F) = \textnormal{Crit}(f_0) \times \{0\} \cup
\textnormal{Crit}(f_1) \times \{ 1\}.$$ The indices of these critical
points satisfy:
$$|(x,0)| = |x|+1, \quad |(y,1)| = |y|.$$
Pick a Riemannian metric $(\cdot, \cdot)$ on $L \times [0, 1]$ which
restricts to $(\cdot, \cdot)_i$ on each $L_i$. As the space of almost
complex structures on $\Sigma$ is connected, we can pick a generic
path $J_\Sigma^t$, $0 \leq t \leq 1$ which connects $J_\Sigma^0$ to
$J_\Sigma^1$.

The chain map $\Phi^c_{\data^0, \data^1}: \mathcal{C}^*(\data^1)
\longrightarrow \mathcal{C}^*(\data^0)$ is defined as follows.  Let $x
\in \textnormal{Crit}(f_0)$ and $y \in \textnormal{Crit}(f_1)$ and $A
\in H_2 (\Sigma, L)$. Consider the critical points $(x,0), (y,1) \in
\textnormal{Crit}(F)$. Denote by $\widehat{\mathscr{P}}(x, y, A)$ the
moduli space of pearly-like trajectories which consist of the
following objects: an increasing sequence $0\leq t_1 < \cdots < t_l
\leq 1$, a collection of disks $u_i : (D^2,
\partial D^2) \to (\Sigma \times \{t_i\}, L \times \{t_i\})$, $i=1,
\ldots, l$, which are $J_\Sigma^{t_i}$ holomorphic ($t_i$ is fixed for
each $u_i$) and a sequence of negative gradient trajectories $\gamma_i
\subset L \times [0,1]$ of $F$ connecting consecutive disks in a
similar way we had for usual pearly trajectories.The first trajectory
should start at $(x, 0)$ and the last ends at $(y, 1)$.  Moreover $\sum [u_i] = A$.
(As was the
case with usual pearly trajectories we allow $A=0$, in which case we
do not have disks at all (i.e.  $l=0$) and the whole pearly
trajectory consists of a negative gradient trajectory of $F$.)
We refer the reader
to~\cite{Bi-Co:qrel-long} for the precise details of this
construction.

For a generic choice of the data involved, each
$\widehat{\mathscr{P}}(x,y,A)$ is a smooth manifold of dimension
$\widehat{\delta}(x,y,A) = |(x, 0)|-|(y, 1)| - 1 - \mu(A) = |y| - |x|
+ \mu(A)$.  Moreover, when $\widehat{\delta} = 0$ the space
$\widehat{\mathscr{P}}(x,y,A)$ is compact, hence consists of a finte
number of trajectories. Define
\[
\Phi^c_{\data^0, \data^1} (y) = \sum_{\widehat{\delta}(x,y,A)=0} \#
\widehat{\mathscr{P}}(x, y, A) x t^{\overline{\mu} (A)},
\]
where the sum is taken over all $x \in \textnormal{Crit}(f_0)$ and $A$
with $\widehat{\delta}(x,y,A) = 0$.  The same construction works well
if one replaces Morse functions and their negative gradient flow by a
negative almost gradient vector field as in~\S\ref{sb:neg-almost-grad}.

We will now exhibit $\Phi^c_{\widetilde{\data}^0_{\varepsilon},
  \widetilde{\data}^1_{\varepsilon}}$ as a ``lift'' of
$\Phi^c_{\data^0, \data^1}$.  We extend
$\widetilde{X}_{\varepsilon_i}^i$ to $\Gamma_L \times [0, 1]$ in the
following way: pick a connection on $\mathcal{N} \times [0, 1]$ as
in~\S\ref{Sb:std-disk-bundle} which extends $\nabla^i$ on boundaries
$\mathcal{N} \times \{ i \}$.  The cutoff functions $\alpha_k$ are
extended into a tubular neighborhood of a boundary in $L \times [0,
1]$ by $\alpha_k(x) \beta_{i_k} (t)$ where $\beta_{i_k} : [0, 1] \to
[0, 1]$ ($i_k \in \{0, 1\}$) is a smooth cutoff function which is
equal to 1 near $i_k$ and vanishes outside a $1/3$ neighborhood of
$i_k$.  Now we use the same construction as in~\S\ref{S:short-exact}:
lift the negative gradient flow of $F$ to $X^{hor}$ using the
horizontal distribution of $\nabla$ and put
\[
\widetilde{X} = X^{hor} + \varepsilon_i \sum (\alpha_k \beta_{i_k}
\circ \pi_{\Gamma_L}) \circ D\tau_k (Y_k)
\]
where $k$ indexes all critical points of $F$, $\tau_k$ are local
trivializations of $\Gamma_L \times [0,1]$ and $Y_k$ are vertical
vector fields near the critical point of $f_i$ as
in~\S\ref{S:short-exact}. We obtain a negative almost gradient vector
field which restricts to $\widetilde{X}_{\varepsilon}^i$ on the
boundary and whose projection coincides with the negative gradient
field of $F$ on $L \times [0, 1]$. The lift of $J_\Sigma^t$ is
constructed in the similar manner as in~\S\ref{S:stretching}. As the
pearl complex $\mathcal{C}^*(\Gamma_L;
\widetilde{\data}^i_{\varepsilon})$ does not change as one increases the
stretching parameter $R$ for the almost complex structure $J_R$, we
may assume that this $R$ is the same as the one used for the pearl
complexes $\widetilde{\data}^i_{\varepsilon}$. Moreover, we require
that the parameter $R$ is large enough so that all the disks which
participate in $0$-index trajectories in $\widehat{\mathscr{P}}(x, y,
A)$ (there is a finite number of them) are located in the appropriate
disk bundle which corresponds to the stretching of a lift of
$J_\Sigma^{t_i}$. Transverality is obtained in analogous way as
in~\S\ref{S:transversality}. By the results of~\cite{Bi-Co:qrel-long,
  Bi-Co:rigidity} $\Phi^c_{\widetilde{\data}^1_{\varepsilon},
  \widetilde{\data}^0_{\varepsilon}}$ are chain homotopic to the
comparison maps $\mathcal{C}^*(\Gamma_L;
\widetilde{\data}^1_{\varepsilon}) \to \mathcal{C}^*(\Gamma_L;
\widetilde{\data}^0_{\varepsilon})$ constructed by the general theory.

We now exploit the special relation between the pearly trajectories on
$\Gamma_L \times [0,1]$ and those on $L \times [0,1]$. We have a lift
$J_t$ of $J_\Sigma^t$ for which all the relevant pearly moduli spaces
$\widehat{\mathscr{P}} (\tilde{x}, \tilde{y}, A)$ project to pearly
moduli spaces on $\Sigma$. A lifting procedure, completely analogous
to the one in~\S\ref{S:lifting}, shows that when
$\widehat{\delta}(x,y,A)=0$ we have:
\[
\# \widehat{\mathscr{P}}(\Gamma_L; x', y', \pi_*^{-1}(A)) = \#
\widehat{\mathscr{P}}(\Gamma_L; x'', y'', \pi_*^{-1}(A)) = \#
\widehat{\mathscr{P}}(L; x, y, A),
\]
while $\widehat{\mathscr{P}}(\Gamma_L; x'', y', \pi_*^{-1}(A)) =
\emptyset$.  These identities show that the following diagram is
commutative on the chain level:
\[
\begin{array}{cccccccccc}
   0 & \to & \mathcal{C}^*(L; \data^1) & \stackrel{i_1}{\to} 
   & \mathcal{C}^*(\Gamma_L; \widetilde{\data}^1_{\varepsilon}) & 
   \stackrel{p_1}{\to} & & \mathcal{C}^{*-1}(L; \data^1) & 
   \to & 0 \\
   & & \Phi^c_{\data^0, \data^1} \downarrow & & 
   \Phi^c_{\widetilde{\data}^0_{\varepsilon},
  \widetilde{\data}^1_{\varepsilon}}\downarrow & & & 
   \Phi^c_{\data^0, \data^1} \downarrow & \\
   0 & \to & \mathcal{C}^*(L; \data^0) & \stackrel{i_0}{\to} & 
   \mathcal{C}^*(\Gamma_L; \widetilde{\data}^0_{\varepsilon}) & 
   \stackrel{p_0}{\to} & & \mathcal{C}^{*-1}(L; \data^0) & \to & 0
\end{array}
\]
It follows that the maps induced in cohomology by $i$ and $p$ do not
depend on the choices of the auxiliary data in the sense that they are
compatible with the comparison maps. In other words, these maps are
canonical.

\section{Product structure} \label{S:products}

\subsection{Multiplicative structure.}
Recall from~\cite{Bi-Co:qrel-long, Bi-Co:rigidity, Bi-Co:Yasha-fest}
that $QH(L)$ has a quantum product $*$ which turns it into an
associative (but not necessarily commutative) unital ring.  

The quantum product is defined in following way. Pick a Riemannian
metric $(\cdot, \cdot)$ on $L$, an almost complex structure
$J_{\Sigma}$ on $\Sigma$ and three Morse functions $f_1, f_2, f_3$ on
$L$. Put $\data_i = (f_i, (\cdot, \cdot), J_{\Sigma})$, $i=1,2,3$.
Let $x \in \textnormal{Crit}(f_1)$ and $q \in L$ a point (which is not
necessarily a critical point of $f_1$). Fix also $A_1 \in
H_2(\Sigma,L)$. Denote by $\mathscr{P}(q, x, A_1; \data_1)$ the space
of pearly trajectories going from $q$, converging to $x$ and with
total homology class $A_1$. We have similar spaces for $\data_2$ and
$\data_3$. Now let $x \in \textnormal{Crit}(f_1)$, $y \in
\textnormal{Crit}(f_2)$, $g \in \textnormal{Crit}(f_3)$, and $A \in
H_2(\Sigma,L)$. Consider the space of tuples $(\gamma_1, \gamma_2,
\gamma_3, u)$ which consist of a $J$-holomorphic disk $u:(D,
\partial D) \to (\Sigma, L)$ (which is allowed to be constant) and a
triple of pearly trajectories
$$(\gamma_1, \gamma_2, \gamma_3) \in 
\mathscr{P} \bigl(u(e^{2 \pi i/3}),x, A_1; \data_1\bigr) \times
\mathscr{P}\bigl(u(e^{-2 \pi i/3}),y, A_2; \data_2\bigr) \times
\mathscr{P}\bigl(z, u (1), A_3; \data_3\bigr),$$ where $A = [u] + A_1
+ A_2 + A_3 \in H_2 (\Sigma, L)$. We denote the space of such tuples
$(\gamma_1, \gamma_2, \gamma_3, u)$ by
$\mathscr{P}_{\textnormal{prod}}(z, x, y, A)$.

The virtual dimension of $\mathscr{P}_{\textnormal{prod}}(z, x, y, A)$
is given by $\delta = |z| - |x| - |y| + \mu(A)$. If $\delta \leq 1$
then for a generic choice of data $(f_1, f_2, f_3, (\cdot, \cdot),
J_{\Sigma})$, the space $\mathscr{P} (z, x, y, A)$ is a smooth
manifold of dimension $\delta$. Moreover, when $\delta = 0$ the moduli
space consists of a finite number of elements
(see~\cite{Bi-Co:qrel-long, Bi-Co:rigidity}). Define now a chain level
operation
$$\mathcal{C}(\data_1) \otimes
\mathcal{C}(\data_2) \longrightarrow \mathcal{C}(\data_3), \quad x
\otimes y \longmapsto x*y,$$ by:
\[
x * y = \sum \# \mathscr{P}_{\textnormal{prod}}(z, x, y, A) z
t^{\overline{\mu}(A)},
\]
where the summation goes over $z,A$ with $\delta(z,x,y,A) = 0$. This
operation descends to an associative unital product on $QH^*(L)$.

The same construction works of course for $\Gamma_L \subset W$ too.
We will now implement it on $\Gamma_L$ using auxiliary data induced
from that of $L$ so that it is adapted to our situation. We would like
to lift the pearly configurations from
$\mathscr{P}_{\textnormal{prod}}(z, x, y, A)$ to $(W, \Gamma_L)$ in a
similar way to what we have done for the `usual' pearly trajectories.

Consider three lifts of $-\textnormal{grad} f_i$, $i=1,2,3$, to
negative almost gradient vector fields $\widetilde{X}^\varepsilon_1,
\widetilde{X}^\varepsilon_2, \widetilde{X}^\varepsilon_3$ on
$\Gamma_L$ as described at the end of~\S\ref{S:hf}. Consider also an
admissible almost complex structure $J_R$ on $M$ induced by
$J_{\Sigma}$ as in~\S\ref{S:stretching} with stretching parameter $R$
large enough.

For a generic choice of parameters the spaces
$\mathscr{P}_{\textnormal{prod}}(\tilde{z}, \tilde{x}, \tilde{y}, A;
\Gamma_L)$ enjoy similar transversality properties as
in~\S\ref{S:transversality} and one may use them to define a chain
level product which descends to the quantum product on
$QH^*(\Gamma_L)$.

The projection property for
$\mathscr{P}_{\textnormal{prod}}(\tilde{z}, \tilde{x}, \tilde{y},
A;\Gamma_L)$ follows from the construction by similar arguments as
in~\S\ref{S:stretching}. Moreover, arguing in a similar manner as
in~\S\ref{S:lifting} we establish the following identities.  For every
critical points $x,y,z$ of $f_1, f_2, f_3$ respectively and $A \in
\pi_2(\Sigma,L)$ with $|z|-|x|-|y|+\mu(A) = 0$:
\[
\begin{array}{rcl}
   \# \mathscr{P}_{\textnormal{prod}}(z, x, y, A; L) &=& 
   \# \mathscr{P}_{\textnormal{prod}}(z', x', y', \pi_*^{-1}(A); 
   \Gamma_L) \\
   \# \mathscr{P}_{\textnormal{prod}}(z, x, y, A; L) &=& 
   \# \mathscr{P}_{\textnormal{prod}}(z'', x'', y', \pi_*^{-1}(A);\Gamma_L) 
   = 
   \# \mathscr{P}_{\textnormal{prod}}(z'', x', y'', \pi_*^{-1}(A); \Gamma_L).
\end{array}
\]
Moreover, $\mathscr{P}_{\textnormal{prod}}(z'', x', y', 
\pi_*^{-1}(A);\Gamma_L)$ does not have any zero-dimensional
components. All together this implies that for every $x \in
\textnormal{Crit}(f_1)$, $y \in \textnormal{Crit}(f_2)$ and $\tilde{x}
\in \textnormal{Crit}(\widetilde{X}^\varepsilon_1)$, $\tilde{y} \in
\textnormal{Crit}(\widetilde{X}^\varepsilon_2)$ we have:
\begin{equation} \label{eq:i-p-prod}
   i(x * y) = i(x) * i(y), \quad p(\tilde{x} * i(y)) = p(\tilde{x})
   * y, \quad p(i(x) * \tilde{y}) = x * p(\tilde{y}).
\end{equation}
Note that these identities hold on the chain level.

\section{The Floer-Euler class} \label{S:floer-euler} Denote by
$\delta: QH^k(L) \longrightarrow QH^{k+2}(L)$ the connecting
homomorphism in the long exact sequence of
Theorem~\ref{T:chain-map-1}. Denote by $1 \in QH^0(L)$ the unity. Define:
\begin{equation} \label{eq:FE-class}
   e_F = \delta(1) \in QH^2(L).
\end{equation}
We call this class the Floer-Euler class of $\Gamma_L \to L$.

\begin{prop} \label{p:FE-mult} For every $\alpha \in QH^*(L)$ we have:
   $$\delta(\alpha) = \alpha*e_F = e_F*\alpha.$$
\end{prop}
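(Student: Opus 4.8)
The plan is to compute $\delta(\alpha)$ directly on the chain level via the snake lemma applied to the short exact sequence
\[ 0 \to \cplx^*(L;\data) \xrightarrow{\ i\ } \cplx^*(\Gamma_L;\widetilde{\data}_\varepsilon) \xrightarrow{\ p\ } \cplx^{*-1}(L;\data) \to 0 \]
of Theorem~\ref{T:chain-map-1}, and then to match the outcome with $\alpha * e_F$ by feeding in the chain-level multiplicativity identities~\eqref{eq:i-p-prod}. Recall that for the snake lemma one represents $\alpha \in QH^j(L)$ by a cocycle $a \in \cplx^j(L;\data)$, picks \emph{any} $\widetilde a \in \cplx^{j+1}(\Gamma_L;\widetilde{\data}_\varepsilon)$ with $p(\widetilde a) = a$, writes $\widetilde d\,\widetilde a = i(b)$ for the unique cocycle $b$, and sets $\delta(\alpha) = [b]$. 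In particular, fixing a cocycle $\epsilon \in \cplx^0(L;\data)$ representing $1\in QH^0(L)$ and a lift $\widetilde\epsilon \in \cplx^1(\Gamma_L;\widetilde{\data}_\varepsilon)$ with $p(\widetilde\epsilon)=\epsilon$, the cocycle $c$ defined by $\widetilde d\,\widetilde\epsilon = i(c)$ satisfies $e_F = \delta(1) = [c]$.

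For a general cocycle $a$ representing $\alpha$, I would use $\widetilde\epsilon * i(a)$ as the snake-lemma lift. First, $\epsilon * a$ is a cocycle representing $1*\alpha = \alpha$, and the second identity in~\eqref{eq:i-p-prod} gives $p(\widetilde\epsilon * i(a)) = p(\widetilde\epsilon)*a = \epsilon * a$, so $\widetilde\epsilon * i(a)$ is an admissible lift of a representative of $\alpha$ (note that no strict-unit property of $\epsilon$ is needed — it suffices that $\epsilon*a$ be \emph{homologous} to $a$). Second, since the chain-level pearl product is a chain map with respect to the tensor differential, the Leibniz rule holds on the nose over $\mathbb{Z}_2$, and since $\widetilde d\,i(a) = i(da) = 0$ (as $i$ is a chain map and $a$ a cocycle) we obtain
\[ \widetilde d\bigl(\widetilde\epsilon * i(a)\bigr) = \widetilde d\,\widetilde\epsilon * i(a) = i(c) * i(a) = i(c*a), \]
using $i(x*y) = i(x)*i(y)$ in the last step. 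Hence $\delta(\alpha) = [c*a] = [c]*[a] = e_F * \alpha$. Running the identical argument with the lift $i(a) * \widetilde\epsilon$ together with the third identity $p(i(x)*\widetilde y) = x*p(\widetilde y)$ yields $\delta(\alpha) = [a*c] = \alpha * e_F$ (over $\mathbb{Z}_2$ there are no signs). Combining the two computations proves $\delta(\alpha) = e_F*\alpha = \alpha*e_F$; en passant this shows that $e_F$ is central in $QH^*(L)$.

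The one point I would treat with care is the bookkeeping of auxiliary data: the identities~\eqref{eq:i-p-prod} were established in~\S\ref{S:products} for particular \emph{compatible} choices of three data sets $(\data_i,\widetilde{\data}_i)$, $i=1,2,3$, entering the definition of $*$, whereas the short exact sequence and $\delta$ were set up for a single pair $(\data,\widetilde{\data}_\varepsilon)$. The clean way around this is to invoke the canonicity of the long exact sequence proved in~\S\ref{S:aux-data}: since $\delta$ does not depend on the auxiliary data, it may be computed using data compatible with the ones used for the product, and then the diagram chase above applies verbatim. I expect this naturality/compatibility check — not any of the algebra — to be the only real obstacle; everything else is a formal consequence of the snake lemma, the Leibniz rule for the pearl product, and the three identities in~\eqref{eq:i-p-prod}.
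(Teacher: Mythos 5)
Your argument is correct and is precisely the approach the paper intends: the paper's proof is a one-sentence invocation of the chain-level multiplicativity of $i$ and $p$ together with the Leibniz rule for the pearl product, and your snake-lemma computation, with $\widetilde\epsilon*i(a)$ (resp.\ $i(a)*\widetilde\epsilon$) as the lift, is exactly what that sentence unpacks to. The auxiliary-data bookkeeping you flag at the end is indeed the only point requiring care, and appealing to the canonicity of the sequence established in \S\ref{S:aux-data} is the intended way to dispose of it.
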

\begin{proof}
   The proof follows easily by noting the multiplicative properties of
   the morphisms $i$ and $p$ (see~\eqref{eq:multip-i-p-2} in
   Theorem~\ref{T:chain-map-1}) together with the fact that the pearly
   differentials on $\mathcal{C}(L)$ and on $\mathcal{C}(\Gamma_L)$
   satisfy the Leibniz rule with respect to the quantum chain level
   operation.
\end{proof}

\section{The positive pearl complex} \label{S:positive} Recall
from~\cite{Bi-Co:Yasha-fest, Bi-Co:rigidity} that the quantum
cohomology of a {\em monotone} Lagrangian $K$ admits also a positive
version, $Q^+H^*(K)$. The construction goes as follows. Let $\Lambda^+
= \mathbb{Z}_2[t]$ be the ring of polynomials in $t$, graded so that
$|t| = N_K$. Let $\data = (f, \rho, J)$ be a pearly data and put
$\mathcal{C}_+ (K; \data) = \mathbb{Z}_2 \langle \textnormal{Crit}(f)
\rangle \otimes \Lambda^+$. We grade $\mathcal{C}_+$ in the same way
as $\mathcal{C}$, i.e. by Morse indices on the left factor and using
the grading of $\Lambda_+$ on the right factor. We endow
$\mathcal{C}_+(K; \data)$ with the same differential $d$ which was
defined for $\mathcal{C}(K; \data)$ in~\S\ref{S:hf}. The fact that
this $d$ maps $\mathcal{C}_+$ into $\mathcal{C}_+$ follows from the
the monotonicity of $K$ since the Maslov index of non-constant
holomorphic disks is always strictly positive.

The cohomology of $(\mathcal{C}_+(K; \data),d)$ is denoted by
$Q^+H^*(K)$ and is called the positive quantum cohomology of $K$. By
the results of~\cite{Bi-Co:rigidity} it does not depend on $\data$.

Note that in contrast to $QH^*(K)$, $Q^+H^*(K)$ is quite different
from $HF^*(K)$ and there is no isomorphism between the two. Note also
that $Q^+H(K)$ can never vanish (unlike $HF(K)$).
See~\cite{Bi-Co:rigidity, Bi-Co:Yasha-fest} for more on that.

Note also that there is an obvious inclusion of cochain complexes
$\mathcal{C}_+(K;\data) \longrightarrow \mathcal{C}(K;\data)$. The
resulting morphism in cohomology $\theta_K : Q^+H(K) \longrightarrow
QH(K)$ is canonical. However, in general it is not injective.

Going back to our Lagrangians $L$ and $\Gamma_L$ we have:
\begin{thm} \label{T:exact-seq-positive} Theorem~\ref{T:chain-map-1}
   continues to hold if one replaces everywhere $\mathcal{C}^*$ by
   $\mathcal{C}_+^*$ and $QH^*$ by $Q^+H^*$. The corresponding class
   $e^+_F$ belongs to $Q^+H^2(L)$. Moreover the morphisms
   $\theta_L:Q^+H(L) \longrightarrow QH(L)$ and
   $\theta_{\Gamma_L}:Q^+H(\Gamma_L) \longrightarrow QH(\Gamma_L)$
   give rise to a long commutative diagram that maps the long exact
   sequence for $QH$ to the corresponding long exact sequence for
   $Q^+H$. Moreover we have $\theta_L(e^+_F) = e_F$. (Therefore, from
   now on we will denote both classes by $e_F$).
\end{thm}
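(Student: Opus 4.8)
The plan is to observe that \emph{every} ingredient in the proof of Theorem~\ref{T:chain-map-1} is manifestly ``positive'', so that restricting the coefficient ring from $\Lambda = \mathbb{Z}_2[t^{-1},t]$ to $\Lambda^+ = \mathbb{Z}_2[t]$ changes nothing of substance. The maps $i$ and $p$ of~\S\ref{S:short-exact} are defined on critical points and extended $\Lambda$-linearly; since they send generators to generators with no $t$-power attached, they restrict to $\Lambda^+$-linear maps $i : \mathcal{C}^*_+(L;\data) \to \mathcal{C}^*_+(\Gamma_L;\widetilde{\data}_\varepsilon)$ and $p : \mathcal{C}^*_+(\Gamma_L;\widetilde{\data}_\varepsilon) \to \mathcal{C}^{*-1}_+(L;\data)$, and the module splitting underlying exactness over $\Lambda$ is literally the same one over $\Lambda^+$. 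Thus
\[
0 \longrightarrow \mathcal{C}^*_+(L;\data) \stackrel{i}{\longrightarrow} \mathcal{C}^*_+(\Gamma_L;\widetilde{\data}_\varepsilon) \stackrel{p}{\longrightarrow} \mathcal{C}^{*-1}_+(L;\data) \longrightarrow 0
\]
is automatically exact. The first point I would verify is that $d$ and $\widetilde{d}$ preserve these positive subcomplexes, which is exactly the monotonicity argument already invoked in~\S\ref{S:positive}: every tuple $\mathbf{A}$ occurring in the differential consists of classes of holomorphic disks, so $\overline{\mu}(\mathbf{A}) \geq 0$.

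Next I would note that the identities~\eqref{Eq:lift-1} and~\eqref{Eq:lift-2}, which drive the chain-map property of $i$ and $p$, are statements about the moduli spaces $\mathcal{P}(\,\cdot\,;\data)$ and $\mathcal{P}(\,\cdot\,;\widetilde{\data}_\varepsilon)$ and are blind to the coefficient ring; hence the computation~\eqref{Eq:chain-maps} runs verbatim with $t^{\overline{\mu}(A)} \in \Lambda^+$, proving that $i$ and $p$ are cochain maps of positive complexes. The transversality results of~\S\ref{S:transversality} concern the very same moduli spaces and are inherited unchanged, so the positive complexes are well-defined and compute $Q^+H(L)$ and $Q^+H(\Gamma_L)$ by~\cite{Bi-Co:rigidity}. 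For canonicity, the comparison maps $\Phi^c_{\data^0,\data^1}$ and $\Phi^c_{\widetilde{\data}^0_\varepsilon,\widetilde{\data}^1_\varepsilon}$ of~\S\ref{S:aux-data} again carry only non-negative powers of $t$ (their weights are $t^{\overline{\mu}(A)}$ with $A$ a sum of holomorphic-disk classes), so they restrict to the positive complexes; by~\cite{Bi-Co:rigidity} these restrictions are chain-homotopic to the canonical comparison maps for $Q^+H$, and the commutative ladder at the end of~\S\ref{S:aux-data} restricts to give canonicity of the positive long exact sequence. The connecting map $\delta^+$ of the positive sequence then lets us define $e^+_F = \delta^+(1) \in Q^+H^2(L)$, where $1 \in Q^+H^0(L)$ is the unit, and since the chain-level multiplicative identities~\eqref{eq:i-p-prod} already hold over $\Lambda$ and the quantum product preserves positivity (again $\overline{\mu} \geq 0$), the proof of Proposition~\ref{p:FE-mult} applies word for word to yield $\delta^+(\alpha) = \alpha * e^+_F = e^+_F * \alpha$ for every $\alpha \in Q^+H^*(L)$.

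Finally, for the compatibility with $\theta$ I would use that the inclusions of complexes $\mathcal{C}_+(L;\data) \hookrightarrow \mathcal{C}(L;\data)$ and $\mathcal{C}_+(\Gamma_L;\widetilde{\data}_\varepsilon) \hookrightarrow \mathcal{C}(\Gamma_L;\widetilde{\data}_\varepsilon)$ commute \emph{strictly} with $i$ and $p$ (these maps are $\Lambda$-linear and restrict to $\Lambda^+$) and with the chain-level quantum products. Hence there is a commutative ladder between the positive and the ordinary short exact sequences of pearl complexes; passing to cohomology and invoking naturality of the connecting homomorphism produces the asserted commutative diagram mapping the $Q^+H$-sequence to the $QH$-sequence with vertical arrows $\theta_L$, $\theta_{\Gamma_L}$, $\theta_L$. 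Since $\theta_L$ is a unital ring map it sends the unit to the unit, so $\theta_L(e^+_F) = \theta_L(\delta^+(1)) = \delta(\theta_L(1)) = \delta(1) = e_F$. The whole argument is essentially bookkeeping; the only point requiring a moment's care is checking that the auxiliary moduli spaces entering the independence-of-data and product constructions (the cobordism spaces $\widehat{\mathscr{P}}$ and the triangle spaces $\mathscr{P}_{\textnormal{prod}}$) also carry only non-negative powers of $t$ — which holds in each case by monotonicity, the total class there being a sum of classes of genuine holomorphic disks — so I do not expect any real obstacle.
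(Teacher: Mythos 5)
Your argument is correct and is precisely the paper's own approach, which is stated there in a single sentence: due to monotonicity all differentials, cochain maps and connecting homomorphisms involve only non-negative powers of $t$, so the entire construction restricts to the positive complexes. Your write-up is simply a fully spelled-out version of that observation, including the (routine) verification of $\theta_L(e^+_F)=e_F$ via the commutative ladder and unitality.
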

\begin{proof}
   The proof is done precisely the same as for
   Theorem~\ref{T:chain-map-1} by noting that, due to monotonicity,
   all differentials, cochain maps and connecting homomorphisms in the
   proof of Theorem~\ref{T:chain-map-1} always involve only
   non-negative powers of $t$.
\end{proof}

\subsection{Comparison with the sequence in singular homology}
\label{sb:qh-vs-sing}
Let $\data = (f, \rho, J)$ be auxiliary pearl datum for the Lagrangian
$K$. Denote by $\data' = (f, \rho)$ the corresponding Morse datum, and
by $CM(K;\data')$ the corresponding Morse complex.

Denote by 
\begin{equation} \label{eq:spec}
   \widetilde{\sigma}: \mathcal{C}^*_+(K;\data)
   \longrightarrow CM^*(K;\data')
\end{equation}
the morphism induced by sending $t \in \Lambda^+$ to $0$, i.e.
$\widetilde{\sigma}(x) = x$ for every $x \in \textnormal{Crit}(f)$ and
$\widetilde{\sigma}(x t^i) = 0$ for every $i>0$. It is easy to see
that $\widetilde{\sigma}$ is a cochain map
(see~\cite{Bi-Co:Yasha-fest} Section 4.3). We denote the resulting map
in cohomology
\begin{equation} \label{eq:sigma} \sigma: Q^+H^*(K) \longrightarrow
   H^*(K;\mathbb{Z}_2).
\end{equation}
This map is canonical in the sense that it does not depend on $\data$.

Going back to the Floer-Gysin sequence we obtain the following
commutative diagram:

\[
\begin{array}{cccccccccc}
   0 & \to & \mathcal{C}_+^*(L; \data) & \stackrel{i}{\to} 
   & \mathcal{C}_+^*(\Gamma_L; \data_{\epsilon})  & 
   \stackrel{p}{\to} & & \mathcal{C}_+^{*-1}(L; \data) & 
   \to & 0 \\
   & & \widetilde{\sigma} \downarrow & & 
   \widetilde{\sigma} \downarrow & & & 
   \widetilde{\sigma}  \downarrow & \\
   0 & \to & CM^*(L; {\data}') & \stackrel{i'}{\to} & 
   CM^*(\Gamma_L; \data'_{\epsilon}) & 
   \stackrel{p'}{\to} & & CM^{*-1}(L; {\data}') & \to & 0
\end{array}
\]
where the maps $i'$ and $p'$ are defined exactly in the same way as
$i$ and $p$. Note that the long exact sequence in cohomology induced
by the bottom short sequence is precisely the Gysin sequence of the
circle bundle $\Gamma_L \to L$ for singular (or Morse) cohomology. We
now obtain a map between the two long exact sequences (induced by the
$\widetilde{\sigma}$'s):
\[
\begin{array}{cccccccccccc}
   \cdots & \to & Q^+H^k(L) & \stackrel{* e_F}{\to} 
   & Q^+H^{k+2}(L)  & 
   \stackrel{i}{\to} & & Q^+H^{k+2}(\Gamma_L) & 
   \stackrel{p}{\to} &  & Q^+H^{k+1}(L) & \to \cdots \\
   & & \sigma \downarrow & & 
   \sigma \downarrow & & & 
   \sigma  \downarrow & & & \sigma  \downarrow \\
   \cdots  & \to & H^k(L;\mathbb{Z}_2) & \stackrel{\cup e}{\to} & 
   H^{k+2}(L;\mathbb{Z}_2) & 
   \stackrel{i'}{\to} & & H^{k+2}(\Gamma_L;\mathbb{Z}_2) & 
   \stackrel{p'}{\to} &  & H^{k+1}(L;\mathbb{Z}_2) & \to \cdots 
\end{array}
\]
From this it is easy to see that $\sigma(e_F) = e$. In this sense, the
Floer-Euler class can be viewed as a deformation of the classical
Euler class.

\begin{rem} \label{r:spec-N_L>3}
   The chain map in~\eqref{eq:spec} fits into the following exact
   sequence of cochain complexes:
   \begin{equation} \label{eq:exact-seq-spec}
      \begin{CD}
         0 @>>> t\, \mathcal{C}^{*-N_L}_+(L;\data) @>>>
         \mathcal{C}^*_+(L;\data) @>\widetilde{\sigma}>> 
         CM^*(L;\data') @>>> 0
      \end{CD}
   \end{equation}
   where the first map is the inclusion.  Since
   $\mathcal{C}^{k-N_L}_+(L;\data)=0$ for every $0 \leq k < N_L$ it
   follows, after passing to the long exact sequence in cohomology,
   that $\sigma : Q^+H^k(L) \to H^k(L;\mathbb{Z}_2)$ is injective for
   every $0 \leq k <N_L$. In particular if $N_L \geq 3$ and if $e_F
   \neq 0$ then $e \neq 0 \in H^2(L;\mathbb{Z}_2)$.
\end{rem}

\section{More on the Floer-Euler class} \label{S:floer-euler-more}
Recall from~\cite{Bi-Co:rigidity} that a Lagrangian $L$ is called wide
if there exists an isomorphism of $\Lambda$--modules:
\begin{equation} \label{eq:wide} QH^*(L) \cong (H(L;\mathbb{Z}_2)
   \otimes \Lambda)^*.
\end{equation}
Note that in this case we also have:
\begin{equation} \label{eq:wide+} Q^+H^*(L) \cong (H(L;\mathbb{Z}_2)
   \otimes \Lambda^+)^*.
\end{equation}
It is important to note however, that for a wide Lagrangian $L$ there
is in general no canonical isomorphism in~\eqref{eq:wide}
or~\eqref{eq:wide+} (at least not for all degrees $*$). Therefore it
is in general {\em impossible} to make a canonical identification
$$Q^+H^2(L) = (H(L;\mathbb{Z}_2) \otimes \Lambda^+)^2.$$ Nevertheless, if $L$ is
wide and $N_L \geq 3$ the identification is possible for degree $*=2$
and we have a canonical identification:
\begin{equation} \label{eq:wide-NL=3}
   Q^+H^2(L) = H^2(L;\mathbb{Z}_2).
\end{equation}
See~\cite{Bi-Co:rigidity}, Section~4.5 for more on that. When $N_L =
2$ we still have short exact sequence: 
\begin{equation} \label{eq:wide+-N_L=2} 0 \longrightarrow
   H^0(L;\mathbb{Z}_2)t \stackrel{j}{\longrightarrow} Q^+H^2(L)
   \stackrel{\sigma}{\longrightarrow} H^2(L;\mathbb{Z}_2)
   \longrightarrow 0,
\end{equation}
where the morphism $\sigma$ is the one defined in~\eqref{eq:sigma}.

\begin{prop} \label{p:wide-EF} Let $L \subset \Sigma$ be a monotone
   wide Lagrangian. Let $e \in H^2(L;\mathbb{Z}_2)$ be the Euler class of the
   circle bundle $\Gamma_L \to L$ and $e_F \in Q^+H^2(L)$ the
   Floer-Euler class. Then:
   \begin{enumerate}
     \item If $N_L \geq 3$ then via the
      identification~\eqref{eq:wide-NL=3} we have $e_F = e$.
     \item If $N_L = 2$ then $\sigma(e_F) = e$.
   \end{enumerate}
\end{prop}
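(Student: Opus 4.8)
The plan is to read off the statement from the commutative ladder of long exact sequences set up in~\S\ref{sb:qh-vs-sing}, which maps the positive Floer--Gysin sequence of the circle bundle $\Gamma_L \to L$ onto the classical Gysin sequence of the same bundle with $\mathbb{Z}_2$--coefficients. Once that ladder is available, item~(2) is essentially immediate, and item~(1) follows by feeding it into Remark~\ref{r:spec-N_L>3} together with the wideness hypothesis; indeed, item~(1) is just the assertion that, in the wide case with $N_L \geq 3$, the relation $\sigma(e_F) = e$ already recorded in~\S\ref{sb:qh-vs-sing} can be promoted to an equality inside $Q^+H^2(L)$.

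Concretely, I would first invoke the commutative diagram of short exact sequences from~\S\ref{sb:qh-vs-sing}: the augmentation $\widetilde{\sigma}:\mathcal{C}^*_+(\,\cdot\,) \to CM^*(\,\cdot\,)$ intertwines $i,p$ with their Morse analogues $i',p'$ (clear on generators, since $i,p$ and $i',p'$ are given by the same formulas $x\mapsto x'$, $y''\mapsto y$ and annihilate the remaining generators). Passing to cohomology produces a morphism from the $Q^+H$ long exact sequence to the long exact sequence in singular cohomology; on the top row the connecting homomorphism is quantum multiplication by $e_F$ (the positive analogue of Proposition~\ref{p:FE-mult}, available thanks to Theorem~\ref{T:exact-seq-positive}), and on the bottom row it is $\cup\,e$, which is precisely the statement that the classical Gysin sequence of $\Gamma_L\to L$ has connecting map ``cup product with the $\mathbb{Z}_2$--Euler class''. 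Naturality of the ladder then gives $\sigma(\alpha * e_F)=\sigma(\alpha)\cup e$ for every $\alpha\in Q^+H^*(L)$; specializing to $\alpha=1$ and using $\sigma(1)=1\in H^0(L;\mathbb{Z}_2)$ yields $\sigma(e_F)=e$, as already observed in~\S\ref{sb:qh-vs-sing}. This is item~(2).

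For item~(1), suppose $N_L\geq 3$. Remark~\ref{r:spec-N_L>3} gives that $\sigma:Q^+H^2(L)\to H^2(L;\mathbb{Z}_2)$ is injective. Wideness of $L$ identifies $Q^+H^2(L)$ with $(H(L;\mathbb{Z}_2)\otimes\Lambda^+)^2$, and every summand $H^{2-jN_L}(L;\mathbb{Z}_2)\,t^j$ with $j\geq 1$ lives in a strictly negative degree (as $N_L\geq 3$), so only the $j=0$ term survives and $\dim_{\mathbb{Z}_2}Q^+H^2(L)=\dim_{\mathbb{Z}_2}H^2(L;\mathbb{Z}_2)<\infty$. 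An injective linear map between finite-dimensional $\mathbb{Z}_2$--spaces of equal dimension is an isomorphism, hence $\sigma$ is bijective in degree~$2$; moreover this $\sigma$ is exactly the canonical identification~\eqref{eq:wide-NL=3} imported from~\cite{Bi-Co:rigidity}, Section~4.5, which in this range of degrees is nothing other than the ``set $t=0$'' map. Thus, under~\eqref{eq:wide-NL=3}, the class $e_F$ corresponds to $\sigma(e_F)=e$.

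The one point I expect to need real care is the matching of the imported identification~\eqref{eq:wide-NL=3} with the canonical morphism $\sigma$ of~\eqref{eq:sigma} in degree~$2$ when $N_L\geq 3$, since the entire content of item~(1) beyond item~(2) is that ``$e_F=e$ under~\eqref{eq:wide-NL=3}'' means literally $\sigma(e_F)=e$; a closely related point is the identification of the bottom connecting homomorphism in the ladder of~\S\ref{sb:qh-vs-sing} with cup product by the $\mathbb{Z}_2$--Euler class. Both are standard facts --- about the structure of the positive quantum complex of a wide Lagrangian, and about circle bundles, respectively --- but should be stated explicitly. Beyond these, the argument is pure diagram chasing together with a dimension count.
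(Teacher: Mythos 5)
Your proof is correct and reconstructs the argument the paper intends but does not write out: the paper states Proposition~\ref{p:wide-EF} without a displayed proof, relying on the commutative ladder of~\S\ref{sb:qh-vs-sing} (yielding $\sigma(e_F)=e$, your item~(2)) and on Remark~\ref{r:spec-N_L>3} together with the identification~\eqref{eq:wide-NL=3} for item~(1). Your dimension count showing that, for wide $L$ with $N_L\geq 3$, the canonical injection $\sigma:Q^+H^2(L)\to H^2(L;\mathbb{Z}_2)$ is in fact an isomorphism and coincides with~\eqref{eq:wide-NL=3}, is precisely the missing step, and your flagging of the two points requiring explicit justification is appropriate.
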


We now examine closer the case $N_L = 2$. Denote by $c_1^{\mathcal{N}}
\in H^2(\Sigma;\mathbb{Z})$ the first Chern class of the normal bundle
of $\Sigma$ in $M$ (so that if $PD [\Sigma] = k a \in
H^2(M;\mathbb{Z})$, with $a$ being an integral lift of $[\omega] \in
H^2(M;\mathbb{R})$, then $c_1^{\mathcal{N}} = k a|_{\Sigma} \in
H^2(\Sigma;\mathbb{Z})$). Note that in our notation the Euler class $e
\in H^2(L;\mathbb{Z}_2)$ is the restriction to $L$ of the modulo-$2$
reduction of $c_1^{\mathcal{N}}$.

Assume that $L$ is wide and that $e = 0 \in H^2(L;\mathbb{Z}_2)$.
From Proposition~\ref{p:wide-EF} and~\eqref{eq:wide+-N_L=2} it follows
that $e_F = r\,t$ for some $r \in \mathbb{Z}_2$. We would now like to
identify this coefficient $r$.

Let $A \in H_2(\Sigma, L)$ with $\mu(A)=2$. Denote by
$\mathcal{M}(A,J_{\Sigma})$ the space of $J_{\Sigma}$-holomorphic
disks $u:(D, \partial D) \to (\Sigma,L)$ with $u_*[D] = A$. Denote by
$G = \textnormal{Aut}(D) \cong PSL(2,\mathbb{R})$ the group of
biholomorphisms of $D$. The group $G$ acts on
$(\mathcal{M}(A,J_{\Sigma}) \times \partial D)$ by $\sigma \cdot (u,z)
= (u\circ \sigma^{-1}, \sigma(z))$. We now have an evaluation map
$$ev: (\mathcal{M}(A,J_{\Sigma}) \times \partial D)/G \longrightarrow L, 
\quad ev(u,z) = u(z).$$ As $ev$ is a smooth map between two closed
manifolds of the same dimension it has a well defined degree modulo
$2$ which we denote $\nu(A) \in \mathbb{Z}_2$. Define now a class $D_L
\in H_2(\Sigma,L)$ by $$D_L = \sum_{A \in H_2(\Sigma,L), \, \mu(A)=2}
\nu(A) A.$$ By Proposition~4.2.1 of~\cite{Bi-Co:rigidity} wideness of
$L$ implies that $\partial(D_L) = 0$, where $\partial: H_2(\Sigma,L;
\mathbb{Z}_2) \to H_1(L;\mathbb{Z}_2)$ is the connecting homomorphism.
Denoting by $i: H_2(L;\mathbb{Z}_2) \to H_2(\Sigma;\mathbb{Z}_2)$ and
by $j:H_2(\Sigma;\mathbb{Z}_2) \to H_2(\Sigma,L;\mathbb{Z}_2)$ the
homomorphisms induced by inclusion, it follows that there exists an
element $S_L \in H_2(\Sigma;\mathbb{Z}_2)$ so that $j(S_L) = D_L$, and
moreover that $S_L$ is unique upto a summand coming from
$i(H_2(L;\mathbb{Z}_2))$. Denote by $c \in H^2(\Sigma;\mathbb{Z}_2)$ the
modulo-$2$ reduction of $c_1^{\mathcal{N}} \in
H^2(\Sigma;\mathbb{Z})$. As $c|_L = e = 0 \in H^2(L;\mathbb{Z}_2)$,
the value of $\langle c, S_L \rangle \in \mathbb{Z}_2$ depends only on
$D_L$.

\begin{prop} \label{p:quantum-part-of-e_F} Let $L \subset \Sigma$ be a
   wide Lagrangian with $N_L = 2$ and with $e=0$. Then $$e_F = \langle
   c, S_L \rangle t.$$
\end{prop}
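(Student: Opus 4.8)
The plan is to reduce the statement to the determination of a single element of $\mathbb{Z}_2$, extract it from the chain‑level description of the connecting homomorphism, and then evaluate the resulting disk count by projecting to $\Sigma$. By Proposition~\ref{p:wide-EF}(2) we have $\sigma(e_F)=e=0$, so $e_F$ lies in the image of the map $j$ in~\eqref{eq:wide+-N_L=2}; hence $e_F=r\,t$ for a unique $r\in H^0(L;\mathbb{Z}_2)\cong\mathbb{Z}_2$ (with $j(1)=[t\cdot 1]$), and the goal is $r=\langle c,S_L\rangle$. To compute $e_F=\delta(1)$ on the chain level I would represent $1\in Q^+H^0(L)$ by the honest cocycle $\mathfrak m=\sum_v v$, the sum over minima $v$ of $f$ (a cocycle since each index‑$1$ critical point has an even number of gradient lines to minima, and positivity of the Maslov index forbids quantum corrections in degree $1$), lift $\mathfrak m$ through $p$ to $\widetilde{\mathfrak m}=\sum_v v''$, and apply $\widetilde d$; the lifting identity~\eqref{Eq:lift-diff} then gives, after $i^{-1}$,
\[
\delta(1)=\sum_{v}\;\sum_{x,A}\#\pearlspace\bigl(x',v'',\pi_*^{-1}(A);\widetilde{\data}_\varepsilon\bigr)\,x\,t^{\mubar(A)}\in\mathcal{C}_+^2(L),
\]
with $x,v$ running over minima. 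Since $|x|+\mu(A)=2$ this splits as $\mathfrak z\,t^0+Q\,t^1$, where $\mathfrak z$ (the $\mu(A)=0$, $|x|=2$ part) is the classical Morse‑theoretic Euler cocycle of $\Gamma_L\to L$ — a coboundary for the Morse differential $d_0$ by the hypothesis $e=0$ — and $Q$ gathers the index‑$0$ pearly trajectories on $(W,\Gamma_L)$ that run from a type‑$'$ critical point over a minimum to a type‑$''$ critical point over a minimum and carry a single Maslov‑$2$ disk. Writing $\mathfrak z=d_0\xi_0$ with $\xi_0$ a Morse $1$‑cochain and using the description of the connecting map of~\eqref{eq:exact-seq-spec}, one finds $j^{-1}[e_F]=[\,Q+d_1\xi_0\,]\in Q^+H^0(L)$; as $L$ is connected this is the common value $r$ of the coefficients of $Q+d_1\xi_0$. (When the vertical Morse data on $\Gamma_L$ can be arranged so that $\mathfrak z\equiv 0$ on the chain level, as for the Clifford torus of~\S\ref{Sbb:clif}, the term $d_1\xi_0$ disappears and $r=\sum_{x,A}\#\pearlspace(x',v'',\pi_*^{-1}A)\bmod 2$ for a fixed minimum $v$.)

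The geometric heart is to evaluate this count by projecting to $\Sigma$. By Corollary~\ref{c:proj-pearly} the trajectories in $Q$ (and those in $d_1\xi_0$) project to index‑$1$ $J_\Sigma$‑holomorphic Maslov‑$2$ disks $u$ on $(\Sigma,L)$ with prescribed incidence at $\pm1$, and the discrete datum that survives the projection is whether the lift jumps from the fibrewise ``$p'$‑level'' to the ``$p''$‑level''. The key observation is that the lift $\widetilde u=(u,\Psi)$ of Lemma~\ref{l:lift-a-disk} has $\Psi$ holomorphic and nowhere zero, hence of winding number $0$ relative to the holomorphic trivialization of $u^*\mathcal N$; comparing with the $\mathbb{Z}_2$‑trivialization of $\mathcal N|_L$ afforded by $e=0$, the boundary loop $\widetilde u|_{\partial D}$ winds $\langle\bar c,[u]\rangle$ times in the circle direction, where $\bar c\in H^2(\Sigma,L;\mathbb{Z}_2)$ is a lift of $c=c_1^{\mathcal N}\bmod 2$ along the restriction map. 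Consequently, running over the $1$‑parameter family of lifts of the class‑$A$ disks through a fixed generic incidence point, the circle coordinate of the endpoint $\widetilde u(1)$ sweeps the fiber $\langle\bar c,A\rangle$ times, so it meets the level of a type‑$''$ critical point $\langle\bar c,A\rangle$ times mod $2$, weighted by the number $\nu(A)$ of class‑$A$ disks through that point. Summing over $A$ with $\mu(A)=2$ gives $r=\sum_{\mu(A)=2}\nu(A)\,\langle\bar c,A\rangle\pmod 2$.

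Finally I would assemble the answer. Since $c$ is the restriction of $\bar c$, by naturality $r=\bigl\langle\bar c,\sum_{\mu(A)=2}\nu(A)A\bigr\rangle=\langle\bar c,D_L\rangle$ (this is well defined even though the individual $\langle\bar c,A\rangle$ are not, because $\partial D_L=0$ by wideness); and $D_L=j(S_L)$, which is precisely where Proposition~4.2.1 of~\cite{Bi-Co:rigidity}, recalled before the statement, is used. Hence $r=\langle\bar c,j(S_L)\rangle=\langle c,S_L\rangle$, which is unambiguous because the indeterminacy $i(H_2(L;\mathbb{Z}_2))$ in $S_L$ is annihilated by $c$ (its restriction being $e=0$). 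This yields $e_F=\langle c,S_L\rangle\,t$.

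I expect the main obstacle to lie in the second paragraph: making rigorous the identification of the fibrewise $p'\!\to\!p''$ jump of the holomorphic lift with the mod‑$2$ relative Chern number $\langle\bar c,A\rangle$, which requires a careful bookkeeping of the interplay between the holomorphic trivialization of $u^*\mathcal N$, the flat connection $\nabla|_L$, and the vertical Morse dynamics near the type‑$'$ and type‑$''$ critical points on $\Gamma_L$ — and, relatedly, controlling the coboundary correction $d_1\xi_0$, equivalently arranging the vertical Morse data so that the classical Euler cocycle $\mathfrak z$ vanishes on the nose whenever possible.
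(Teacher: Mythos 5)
Your algebraic set-up (first paragraph) is correct and is exactly the route the paper's omitted proof points at: represent $1$ by $\sum_v v$, lift through $p$, apply $\widetilde d$, and use the lifting identities~\eqref{Eq:lift-diff} to split $\delta(1)=\mathfrak z+Qt$ with $\mathfrak z$ the Morse Euler cocycle and $Q$ the count of index-$0$ trajectories on $\Gamma_L$ from a $'$-point to a $''$-point carrying one Maslov-$2$ disk; and $j^{-1}[e_F]=[Q+d_1\xi_0]$ does follow from $d_0 d_1=d_1 d_0$. The genuine gap is the geometric evaluation in paragraph two, and it is deeper than the bookkeeping caveat you flag. The per-class pairing $\langle\bar c,A\rangle$ is not well-defined: $\bar c$ is determined only up to $\delta^*H^1(L;\mathbb{Z}_2)$, which shifts $\langle\bar c,A\rangle$ by $\langle\beta,\partial A\rangle$; only the aggregate $\langle\bar c,D_L\rangle$ is invariant, precisely because $\partial D_L=0$ (this is where wideness enters). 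Correspondingly, the ``fibre coordinate of the endpoint $\widetilde u_s(1)$'' only makes sense after trivializing $\Gamma_L$ over the moving loop $u_s(\partial D)$, and that trivialization \emph{is} the choice of $\bar c$. So the class-by-class assertion that the lift crosses $W^s_{v''}$ exactly $\langle\bar c,A\rangle$ times cannot hold for every $\bar c$; what must be shown is that changing $\bar c$ is compensated by the corresponding change of the primitive $\xi_0$ inside $d_1\xi_0$, making $Q+d_1\xi_0$ choice-independent and equal to $\langle\bar c,D_L\rangle$. Your argument does not establish this interaction, and the parenthetical about arranging $\mathfrak z\equiv 0$ handles only a special case.

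The winding identification itself also has a hole. Comparing the holomorphic lift $\widetilde u=(u,\Psi)$ to a flat section of $\mathcal N$ over $u(\partial D)$ produces, via Stokes' theorem, the quantity $\int_D u^*\omega_\Sigma$ corrected by the holonomy of $\nabla|_L$ around $u(\partial D)$, and neither of these is an integer; extracting the integer $\langle\bar c,A\rangle$ uses precisely the unspecified mod-$2$ trivialization, whose dependence is the same issue as above. A cleaner route — and plausibly what the authors mean by ``rather straightforward'' — is to derive the proposition from Proposition~\ref{p:quant-rest-c_1}: writing $e_F=r_{\scriptscriptstyle L}(c)=c*1$ and expanding the $QH(\Sigma;\Lambda^+)$-module action in powers of $t$, the $t^0$-term is the classical restriction $c|_L=e=0$ and the $t^1$-term counts Maslov-$2$ disks through a generic point whose interior meets a cycle Poincar\'e-dual to $c$, giving $\langle\bar c,D_L\rangle=\langle c,S_L\rangle$ directly, with no per-disk winding bookkeeping and no $\xi_0$.
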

The proof is rather straightforward and follows from the definition of
the Floer-Euler class $e_F$ as the image of $1 \in QH^0(L)$ under the
connecting homomorphism: $e_F = \delta(1)$. We therefore omit the
details.

Next we would like to establish a relation between the first Chern
class of the normal bundle $\mathcal{N} \to \Sigma$ of $\Sigma$ in $M$
and the Floer-Euler class $e_F \in QH^2(L)$. Recall
from~\cite{Bi-Co:qrel-long, Bi-Co:rigidity, Bi-Co:Yasha-fest} that
$QH(L)$ has a structure of a module over the quantum cohomology
$QH(\Sigma;\Lambda) = H(\Sigma) \otimes \Lambda$ of the ambient
manifold $\Sigma$, where the latter is endowed with the quantum
product ring structure. For reasons of compatibility with $QH(L)$ we
use here $\Lambda$ as the coefficients for $QH(\Sigma;\Lambda)$, which
is an obvious extension of the usual ring of coefficients commonly
used for $QH(\Sigma)$. (See Section~2.5 of~\cite{Bi-Co:Yasha-fest} or
Section~2.1.2 of~\cite{Bi-Co:rigidity} for more details on this.) This
module structure is given by a degree preserving morphism:
$$QH(\Sigma;\Lambda) \otimes_{\Lambda} QH(L) \longrightarrow QH(L), \quad
a\otimes \alpha \longmapsto a*\alpha.$$ (Since this module structure
is compatible with the quantum multiplications of both $QH(\Sigma)$
and $QH(L)$ we have denoted it by abuse of notation by $*$ too.) A
similar construction works with $\Lambda$ replaced by $\Lambda^+$
everywhere.

Consider now the map $$r_{\scriptscriptstyle L}: QH^*(\Sigma;\Lambda)
\longrightarrow QH^*(L), \quad a \longmapsto a*1.$$ We view this map
as a quantum analogue of the classical restriction map $H^*(\Sigma)
\to H^*(L)$, \, $a \mapsto a|_L$.  Note that the image of
$c_1^{\mathcal{N}}$ under the classical restriction is the classical
Euler class $e \in H^2(L)$.  The following proposition is a quantum
version of this:
\begin{prop} \label{p:quant-rest-c_1}
   Let $L \subset \Sigma$ be a monotone Lagrangian. Denote by $c \in
   H^2(\Sigma;\mathbb{Z}_2)$ the modulo-$2$ reduction of
   $c_1^{\mathcal{N}} \in H^2(\Sigma;\mathbb{Z})$. Then
   $$e_F = r_{\scriptscriptstyle L}(c).$$
\end{prop}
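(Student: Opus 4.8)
The plan is to compute the Floer--Euler class $e_F=\delta(1)$ directly at the chain level from the short exact sequence of \S\ref{S:short-exact}, and then to recognize the resulting cocycle as the standard chain--level representative of the $QH(\Sigma;\Lambda)$--module action $c*1$. First I would represent $1\in QH^0(L)$ by the pearl cocycle $a=\sum_{y\in\textnormal{Crit}_0(f)}y\in\cplx^0(L;\data)$. Since $p(y'')=y$, the chain $b=\sum_{y\in\textnormal{Crit}_0(f)}y''\in\cplx^1(\Gamma_L;\widetilde{\data}_{\varepsilon})$ satisfies $p(b)=a$, and as $p(\widetilde{d}b)=d(p b)=d a=0$ we get $\widetilde{d}b\in\textnormal{Im}\,i$; by the definition of the connecting homomorphism and of $e_F$ (see \S\ref{S:floer-euler} and \eqref{eq:FE-class}), $e_F$ is represented by the unique $\zeta\in\cplx^2(L;\data)$ with $i(\zeta)=\widetilde{d}b$. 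Feeding in the identities \eqref{Eq:lift-diff}, the type--$(\cdot)'$ part of $\widetilde{d}\,y''$ for a minimum $y$ is $\sum_{x,A}\#\,\pearlspace(x',y'',\pi_*^{-1}(A);\widetilde{\data}_{\varepsilon})\,x'\,t^{\overline{\mu}(A)}$, the sum over $(x,A)$ with $|x|+\mu(A)=2$, so that
\[
 e_F=\Bigl[\,\sum_{y\in\textnormal{Crit}_0(f)}\ \sum_{|x|+\mu(A)=2}\ \#\,\pearlspace\bigl(x',y'',\pi_*^{-1}(A);\widetilde{\data}_{\varepsilon}\bigr)\ x\,t^{\overline{\mu}(A)}\Bigr],
\]
where only the cases ($|x|=2$, $A=0$) and ($|x|=0$, $\mu(A)=2$) occur.

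Next I would push these moduli spaces down via $\pi$. By Corollary~\ref{c:proj-pearly} every $\widetilde\gamma\in\pearlspace(x',y'',\pi_*^{-1}(A);\widetilde{\data}_{\varepsilon})$ projects to a pearly trajectory $\gamma$ on $(\Sigma,L)$ from $x$ to $y$ in class $A$, of index $|x|-|y|+\mu(A)-1=1$, so $\gamma$ lies in a connected component $\mathscr{C}$ of the one--dimensional space $\pearlspace(x,y,A;\data)$, and $\widetilde\gamma$ is precisely the lift of $\gamma$ starting at $x'$ (unique by \S\ref{S:lift-traj}) that terminates at $y''$ rather than at $y'$. Conversely, for $\gamma$ running over $\mathscr{C}$ I would lift its gradient segments by the flow of $X_\varepsilon$ starting at $x'$ and its disks by Lemma~\ref{l:lift-a-disk}, and record the fibre coordinate (in the trivialization $\tau_y$) of the terminal point; this defines a smooth map $\theta_{\mathscr{C}}:\mathscr{C}\to S^1$, and the $x'$--lift of $\gamma$ ends at $y''$ exactly when $\theta_{\mathscr{C}}(\gamma)=p''_y$, the fibre coordinate of $y''$. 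For a generic choice of the circle Morse functions $h_i$ this value is attained transversally, whence
\[
 \sum_{y}\sum_{x,A}\#\,\pearlspace\bigl(x',y'',\pi_*^{-1}(A);\widetilde{\data}_{\varepsilon}\bigr)\equiv\sum_{y}\sum_{\mathscr{C}}\#\,\theta_{\mathscr{C}}^{-1}(p''_y)\pmod 2,
\]
and for a closed component $\mathscr{C}\cong S^1$ the inner count is the mod--$2$ degree of $\theta_{\mathscr{C}}$; the non--closed components, whose ends are broken configurations, are accounted for by the usual compactness--and--gluing bookkeeping of~\cite{Bi-Co:qrel-long} (they contribute exactly the boundary terms built into $\delta(1)$ as a cohomology class).

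Finally I would identify $\sum_{\mathscr{C}}\deg(\theta_{\mathscr{C}})$ with the count computing $r_{\scriptscriptstyle L}(c)=c*1$. I would take $C\subset\Sigma$ to be a codimension--$2$ cycle Poincar\'e dual mod $2$ to $c_1^{\mathcal{N}}$, realized as the zero locus of a generic smooth section $s$ of $\mathcal{N}\to\Sigma$ in general position with respect to the finitely many disks and gradient segments in sight. On a $J_\Sigma$--holomorphic disk $u:(D,\partial D)\to(\Sigma,L)$ the lift $\widetilde u$ of Lemma~\ref{l:lift-a-disk} is a nowhere--vanishing section of $u^*\mathcal{N}$, whereas $s\circ u$ is a section of $u^*\mathcal{N}$ whose zero set is $u^{-1}(C)$; since the curvature of $\nabla$ is $\tfrac{i}{2\pi}\omega_{\Sigma}$, comparing these two sections along a loop in $\mathscr{C}$ identifies, mod $2$, the net change of $\theta_{\mathscr{C}}$ along that loop with the number of incidences $u_\gamma(z)\in C$ accumulated along it. Thus $\sum_{\mathscr{C}}\deg(\theta_{\mathscr{C}})$ equals the mod--$2$ count of configurations consisting of a pearly trajectory on $(\Sigma,L)$ converging to $x$ in class $A$ together with an interior marked point $z$ on one of its disks with $u(z)\in C$ --- which is precisely the standard chain--level model of $c*1$ on the generator $x\,t^{\overline{\mu}(A)}$ with $|x|+\mu(A)=2$; for $A=0$ this reduces to the classical Gysin computation, producing the term $c|_L=e$, in agreement with $\sigma(e_F)=e$ from \S\ref{sb:qh-vs-sing} (note that, as $N_L\ge 2$, the quantum part can be non--zero only when $N_L=2$, cf.\ Proposition~\ref{p:quantum-part-of-e_F}). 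Combining the three steps yields $e_F=r_{\scriptscriptstyle L}(c)$. The hard part will be this last step: turning the informal comparison of $\widetilde u$ with $s\circ u$ along $\mathscr{C}$ into a rigorous identity between the mod--$2$ degree of $\theta_{\mathscr{C}}$ and the intersection number with $\pi^{-1}(C)$ (equivalently, the pairing with $c_1^{\mathcal{N}}$), reconciling it with whichever precise chain--level model of the $QH(\Sigma;\Lambda)$--module action one adopts and with the boundary/gluing bookkeeping for the non--closed components above, and arranging all the perturbation data ($J_\Sigma$, the $h_i$, the section $s$, and the stretched admissible $J_R$) compatibly so that the projection/lifting correspondence of \S\ref{S:stretching}--\S\ref{S:lifting} applies verbatim to the configurations carrying the extra incidence condition on $C$.
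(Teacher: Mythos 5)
The paper does not actually supply a proof of Proposition~\ref{p:quant-rest-c_1} --- it states only that the argument is ``based on a Morse theoretic interpretation of the class $c_1^{\mathcal{N}}$\ldots using the classical Gysin sequence for the circle bundle $\mathcal{P}\to\Sigma$'' and omits the details --- so a line-by-line comparison is impossible. That said, your overall strategy is in the spirit of that hint, and your chain-level starting point is correct: with $|y|=0$ forcing $\mu(\mathbf{A})\ge N_L\ge 2$ to kill the mixed index case, the cocycle $\sum_y y$ does represent $1$, $\sum_y y''$ is a valid $p$-preimage, and pushing through the identities~\eqref{Eq:lift-diff} together with the fact that $\sum_y\#\mathcal{P}(x,y,0;\data)\equiv 0\pmod 2$ for every index-$1$ critical point $x$ gives exactly the chain representative of $e_F$ you wrote down.

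Where the argument genuinely falls short is in steps three and four, in ways beyond the ``hard part'' you flag. First, your dismissal of the non-closed components $\mathscr{C}$ of $\mathcal{P}(x,y,A;\data)$ is too quick. The relevant fact (which you should state and prove rather than hand off to ``compactness-and-gluing bookkeeping'') is that at every broken end of $\mathscr{C}$ the value of $\theta_{\mathscr{C}}$ is $p'_y$, because each index-$0$ piece of the broken trajectory lifts from a $(\cdot)'$ critical point to a $(\cdot)'$ critical point by the dimension argument of \S\ref{S:lift-traj}. But even with this in hand, an interval with both ends at $p'_y$ can wind any number of times around $S^1$; $\#\theta_{\mathscr{C}}^{-1}(p''_y)$ on such a component is congruent mod $2$ to that winding number, \emph{not} automatically zero. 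So the interval contributions do not simply drop out; they must be absorbed into the same geometric identification you run for the closed components, or one must first argue (via a compatible cobounding chain) that they only change $\zeta$ by a coboundary. Either way this is a real step, not bookkeeping.

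Second, the final identification is where the actual content of the proposition lives, and your sketch conflates two different chain models. The class $r_{\scriptscriptstyle L}(c)=c*1$ is, by the definition used in~\cite{Bi-Co:qrel-long, Bi-Co:rigidity, Bi-Co:Yasha-fest}, computed from a \emph{Morse} cocycle representative of $c$ on $\Sigma$ together with an incidence condition on pearly configurations; it is not, on the nose, an intersection count with the zero set $C$ of a generic smooth section $s$ of $\mathcal{N}$. Your comparison of $\widetilde{u}$ with $s\circ u$ along a loop of disks is a reasonable heuristic for seeing $c_1^{\mathcal{N}}$ emerge, but to turn it into the claimed identity one still has to (a) relate the total winding of $\theta_{\mathscr{C}}$ (which receives contributions both from the holonomy of $\nabla$ over the gradient segments and from the disk-lifting of Lemma~\ref{l:lift-a-disk}) to a pairing of $c_1^{\mathcal{N}}$ with the $2$-cycle swept out by the family, and then (b) match that pairing with the pearly-incidence count defining $c*1$ in the Morse model, with the boundary effects of (a) organized coherently across the different $(x,A)$. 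The paper's hint suggests a cleaner route: run the same ``$(f,f_\varepsilon,X_\varepsilon)$'' construction on $(\Sigma,P)$ itself to get a Morse cochain representative of $\delta_\Sigma(1)=c$ for the classical Gysin sequence of $P\to\Sigma$, and compare that Morse representative directly, at the chain level, with the configurations defining $r_{\scriptscriptstyle L}$. That sidesteps the choice of a geometric cycle $C$ and makes both sides live in the same model from the start. As written, your proposal is a good blueprint but leaves these two bridges unbuilt.
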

The proof is again straightforward and is based on a Morse theoretic
interpretation of the class $c_1^{\mathcal{N}} \in H^2(\Sigma)$ using
the classical Gysin sequence for the circle bundle $\mathcal{P} \to
\Sigma$. We omit the details.

\section{An analogous exact sequence in $(M, \omega)$}
\label{S:seq-in-closed}

In this section we discuss the analogous sequence which arises when
one replaces the ambient manifold $W = M \setminus \Sigma$ with $M$.
Recall from~\cite{Bi-Co:rigidity} (Section~6.4) that for a suitable
choice of the parameter $r_0$ in the construction of $\Gamma_L$
in~\S\ref{Sb:lag-s1} $\Gamma_L$ becomes monotone also when viewed as a
Lagrangian submanifold of $M$. (In contrast with $\Gamma_L \subset W$,
here there is a unique $r_0$ which makes $\Gamma_L$ monotone in $M$).

Assume that the minimal Maslov number $N_L$ of $L$ is even and $\geq
2$. As in Proposition~\ref{P:monotone-gl} the homomorphism $\pi_2(M
\setminus \Delta, \Gamma_L) \to \pi_2(M, \Gamma_L)$ induced by the
inclusion is surjective. We also have:
\begin{equation} \label{eq:p2_2-M-Delta}
   \pi_2(M \setminus \Delta,
   \Gamma_L) \cong \pi_2(\mathcal{N}, \Gamma_L) = \mathbb{Z} F \oplus
   \pi_2(\mathcal{N} \setminus \Sigma, \Gamma_L),
\end{equation}
where $F$ is the class represented by the vertical disks in the fibres
of the disk bundle $E_{r_0} \to \Sigma$, i.e. by $\{ v \in
\mathcal{N}_p \mid |v| \leq r_0\}$. (Here $p$ is a point in $L$ and
$\mathcal{N}_p$ is the fibre over $p$.) Moreover the Maslov class of
$\Gamma_L$ in $M$ behaves as follows (compare to~\ref{P:monotone-gl}):
$$\mu_{\Gamma_L}(F)=2, \quad \mu_{\Gamma_L}(A) = \mu_L(\pi_*(A)), \;\;
\forall A \in \pi_2(\mathcal{N} \setminus \Sigma, \Gamma_L).$$
It follows that $N_{\Gamma_L} = 2$.

Since the minimal Maslov numbers of $\Gamma_L$ and $L$ are now
different we will use the following extension of the coefficient ring
for the pearl complex of $L$. Put $\mathcal{A} = \mathbb{Z}_2[q^{-1},
q]$, with $|q|=2$ and let $\Lambda = \mathbb{Z}_2[t^{-1}, t]$ with
$|t|=N_L$ as before. We define on $\mathcal{A}$ a structure of an
$\Lambda$-algebra via the ring homomorphism $\Lambda \ni t \longmapsto
q^{\frac{N_L}{2}} \in \mathcal{A}$. Given auxiliary data $\data$, we define the
pearl complex on $L$ using coefficients in $\mathcal{A}$:
$$\mathcal{C}(L; \data; \mathcal{A}) = \mathcal{C}(L;\data) 
\otimes_{\Lambda} \mathcal{A},$$ with the obvious extension of the
pearly differential by linearity over $\mathcal{A}$. We denote the
correpsonding cohomology by $QH(L; \mathcal{A})$. As for $\Gamma_L$ we
define the data $\widetilde{\data}_{\varepsilon}=(f_\varepsilon,
X_\varepsilon, J)$ as in~\S\ref{S:short-exact}. Here $J$ is an
admissible almost complex structure induced from $J_{\Sigma}$ as
described in~\S\ref{S:stretching} but now $J$ is defined on the whole
of $M$.  Note that by the construction in~\S\ref{S:stretching} such
$J$'s coincide with $J_{\Sigma}$ on $\Sigma$, hence $\Sigma$ is a
$J$-holomorphic submanifold. The pearl complex of $\Gamma_L \subset M$
is defined as usual, but we denote the coefficients by $\mathcal{A}$
(rather than $\Lambda$ which is already used for $L$). In order to
distinguish the pearl complex of $\Gamma_L \subset M$ from that of
$\Gamma_L \subset W$ we denote the former by
$(\mathcal{C}_{M}(\Gamma_L; \widetilde{\data}_{\varepsilon}),
\widetilde{d}_M)$ and the latter by $(\mathcal{C}_{W}(\Gamma_L;
\widetilde{\data}_{\varepsilon}), \widetilde{d}_W)$. We denote their
cohomologies by $QH_M(\Gamma_L)$ and $QH_W(\Gamma_L)$ respectively.

In this new setup a similar version of
Proposition~\ref{p:stretching-neck} holds, namely:
\begin{prop}\label{p:stretching-neck-M}
   For generic $\data$, there exists $R_0 > 0$ such that for every
   $J_R$ as described above with $R > R_0$ the following holds: every
   pearly trajectory $\gamma \in \pearlspace_0 (J_R)$ is contained in
   the image $F(E_{r_0+\varepsilon})$ of the $(r_0+\varepsilon)$-disk
   bundle of $\mathcal{N}$ under $F$.
\end{prop}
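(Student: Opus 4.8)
The plan is to run essentially verbatim the argument proving Proposition~\ref{p:stretching-neck}, the only modifications being bookkeeping ones forced by working in the closed manifold $M$ rather than in $W=M\setminus\Sigma$. Argue by contradiction: if no such $R_0$ existed for a generic $\data$, there would be stretching parameters $R_n\to\infty$ and pearly trajectories $\gamma_n\in\pearlspace_0(J_{R_n})$ for $\Gamma_L\subset M$ (with data $\widetilde{\data}_\varepsilon$) which meet the neighbourhood $U$ of $\Delta$, i.e. leave $F(E_{r_0+\varepsilon})$. Using the holomorphic identification $(M,J_{R_n})\cong(M^{R_n},J_{R_n})$ we view $\gamma_n$ in the stretched manifolds $M^{R_n}=M^-\cup([-R_n,R_n]\times P)\cup M^+$; here $\Sigma$ sits inside $M^+$ as the zero section of $E_{r_0}$, while $\Delta$ lies in the Weinstein piece $M^-$, and $U\subset M^-$. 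Monotonicity of $\Gamma_L\subset M$ bounds the $\omega$-area of the disks in $\gamma_n$, and the computation based on~\eqref{Eq:omcan} bounds their $\omega$-energy $E_\omega$; Lemma~9.2 and Theorem~10.3 of~\cite{BEHWZ:compactness} then produce, after passing to a subsequence, a limiting holomorphic building $\overline u$ in $M^+_\infty\sqcup M^-_\infty$, with components a punctured disk $u_1\colon(S_1,\partial S_1)\to(M^+_\infty,\Gamma_L)$, finite-energy planes $u_2\colon S_2\to M^-_\infty$, and possibly some cylindrical pieces in $\mathbb R\times P$, all asymptotic at their punctures to fibres of $P\to\Sigma$ (the Reeb orbits of $(P,\alpha)$).

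Next I would project everything to $\Sigma$. Since $J_R$ restricted to $E_{r_0+\varepsilon}$ makes $\pi\colon(E_{r_0+\varepsilon},J_R)\to(\Sigma,J_\Sigma)$ holomorphic --- and this now includes the zero section $\Sigma$ itself, so there is no obstruction even if $u_1$ meets $\Sigma$ --- the projection $\pi_1\colon M^+_\infty\to\Sigma$ is holomorphic, $\pi_1\circ u_1$ is a $J_\Sigma$-holomorphic punctured disk with boundary on $L$, and since the Reeb asymptotics project to points the punctures are removable singularities, so $\pi_1\circ u_1$ is a genuine $J_\Sigma$-disk in $(\Sigma,L)$. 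For a plane $u_2$ in $M^-_\infty$ the projection $\pi_2$ is defined only off $\Delta$ and is non-holomorphic near $\Delta$; as $\text{codim\,}\Delta>2$ one first perturbs $u_2$ off $\Delta$ (away from its boundary, if any) and then projects to a non-holomorphic sphere $v\colon S^2\to\Sigma$ with a removable singularity at the puncture. The Stokes-theorem computation from the proof of Proposition~\ref{p:stretching-neck}, combined with monotonicity of $\Sigma$, gives $c_1^\Sigma([v])>0$, and the hypothesis that $\gamma_n$ reaches $U\subset M^-$ guarantees that at least one component of $\overline u$ genuinely enters the core of $M^-$, hence that at least one such $v$ occurs. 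Bubbling of $J$-holomorphic spheres inside the closed submanifold $\Sigma\subset M$ --- a phenomenon absent in the $W$ setting --- is harmless in the same way: such a sphere projects to itself and contributes a positive Chern number.

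The dimension count is then identical to that in Proposition~\ref{p:stretching-neck}. Removing the sphere(s) $v$ from the projected configuration leaves a genuine pearly trajectory $\gamma_\Sigma$ on $(\Sigma,L)$ from $x_\Sigma=\pi(\widetilde x)$ to $y_\Sigma=\pi(\widetilde y)$ in a class $B=\pi_*(A)-\sum[v]$, whose moduli space has non-negative virtual dimension because $J_\Sigma$ is regular. Using $|\widetilde x|\le|x_\Sigma|+1$ and $|\widetilde y|\ge|y_\Sigma|$ (because $|x'|=|x|$ and $|x''|=|x|+1$), together with $\mu_{\Gamma_L}(A)=\mu_L(\pi_*A)+2m$ where $m\ge 0$ is the coefficient of the fibre class $F$ in $A$ --- positivity of intersection of $u_1$ with the $J$-holomorphic hypersurface $\Sigma$ forces $m=u_1\cdot\Sigma\ge 0$, and recall from~\eqref{eq:p2_2-M-Delta} and the formulas following it that $\mu_{\Gamma_L}(F)=2$ and $\mu_{\Gamma_L}=\mu_L\circ\pi_*$ on $\pi_2(\mathcal N\setminus\Sigma,\Gamma_L)$ --- and $\mu_L(\pi_*A)=\mu_L(B)+2\sum c_1^\Sigma([v])\ge\mu_L(B)+2$, one obtains $\delta(\overline\gamma)\ge\delta(\gamma_\Sigma)+1\ge 1>0$, contradicting $\overline\gamma\in\pearlspace_0$. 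The remaining possible degenerations of $\gamma_n$ (breaking of flow segments, disk bubbling) are excluded by combining this with the compactness analysis of~\cite{Bi-Co:qrel-long,Bi-Co:rigidity}, exactly as before.

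The main obstacle is the one already present (and resolved) in Proposition~\ref{p:stretching-neck}: one must be sure that ``$\gamma_n$ leaves $E_{r_0+\varepsilon}$'' genuinely forces a nontrivial component of $\overline u$ in the core of $M^-_\infty$ --- not merely in its cylindrical end, which carries no area against $\pi_2^*\omega_\Sigma$ and hence would yield no positive Chern contribution --- and that after the perturbation near $\Delta$ and the removal of the bubbles the projected trajectory lands in a moduli space on $(\Sigma,L)$ of virtual dimension $\le 1$, where the generic choice of $\data$ supplies transversality; this is exactly where $\text{codim\,}\Delta>2$ and the index estimates $|\widetilde x|\le|x_\Sigma|+1$, $|\widetilde y|\ge|y_\Sigma|$ are used. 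By contrast, the loss of the maximum principle that was available in the $W$ setting causes no difficulty here, since the conclusion sought permits the disks to enter $E_{r_0}$ and even to meet $\Sigma$; one only needs to rule out their reaching $U$, which is precisely the ``$\Delta$-side'' estimate of Proposition~\ref{p:stretching-neck}.
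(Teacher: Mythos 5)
Your proof is correct and follows essentially the same route as the paper's: compactness via~\cite{BEHWZ:compactness}, projection of the building to $\Sigma$, and the Maslov index estimate using monotonicity of $\Sigma$, positivity of intersection with $\Sigma$, and the $M$-version Maslov formula $\mu_{\Gamma_L}(C)=\mu_L(\pi_*C)+2\,C\cdot[\Sigma]$. One small bookkeeping slip: you identify $m$ with $u_1\cdot\Sigma$, but in general $m=A\cdot[\Sigma]$ receives contributions from all components of $\bar u$ in $M^+_\infty$ (sphere bubbles and vertical disks in the class $F$ also meet $\Sigma$), not just the principal disk component $u_1$; the paper therefore deduces $m\ge0$ from positivity of intersections applied to the original $J_{R_n}$-holomorphic disks representing $A$ against the $J_{R_n}$-holomorphic hypersurface $\Sigma$. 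Since you only need $m\ge0$, the slip does not affect the conclusion.
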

\begin{proof}
   The proof is almost identical to that of
   Proposition~\ref{p:stretching-neck} except of the following points.
   First, by the maximum principle, if $u:(D, \partial D) \to (M,
   \Gamma_L)$ is a $J_R$-holomorphic disk then either $u(D)$ is
   contained in $M \setminus \textnormal{Int\,} E_{r_0}$
   or $u(D)$ intersects $\Sigma$. For those disks that lie entirely in
   $M \setminus \textnormal{Int\,} E_{r_0}$ the proof of
   Proposition~\ref{p:stretching-neck} holds without any change. 

   Now suppose that we have a sequence of pearly trajectories
   $\gamma_{n}$ which contain $J_{R_n}$-holomorphic disks
   $u_{R_n}$ such that $u_{R_n}(D)$ intersect $\Sigma$ (as well as the
   complement of $E_{r_0+\varepsilon}$, as was assumed in the proof of
   Proposition~\ref{p:stretching-neck}).  Arguing exactly as in the
   proof of Proposition~\ref{p:stretching-neck} we obtain a
   holomorphic building in $M^{\infty}$ part of which, say $\bar{u}'$
   is in $M^-_{\infty}$ and another part $\bar{u}''$ in
   $M^{+}_{\infty}$ (which also intersects $\Sigma$). There may appear
   additional part $\bar{u}'''$ whose components lie in the cylinder 
   $\R \times P$. The first part,
   $\bar{u}'$, can be analyzed and dealt with as in the proof of
   Proposition~\ref{p:stretching-neck}. In particular we assume that
   one of its components $u_2$, after being perturbed to lie away from
   $\Delta$, projects into a sphere $v$ in $\Sigma$ with positive
   Chern number. The second part, $\bar{u}''$ might contain components
   of the following kinds:
   \begin{enumerate}
     \item holomorphic spheres $u_s''$ (appearing as bubbles) in
      $M^+_{\infty}$.
     \item disks $u_d''$ in the class $F$ (or its multiples).
     \item holomorphic curves $u_{-}''$ similar to $u_1$ from the
      proof of Proposition~\ref{p:stretching-neck} defined on a
      punctured disk or sphere and at the punctured asymptotically go
      to a periodic orbit at $-\infty$ in $M^+_{\infty}$.
     \item some other genuine holomorphic disks $u''_o$ in
      $(M^{\infty}, \Gamma_L)$ (lying in a compact part of
      $M^{+}_{\infty}$).
   \end{enumerate}
   Note that the projection of the disks $u_d''$ via $\pi$ must be
   constant (since $\pi_*(F)=0$), hence these disks are vertical.
   Next, the projection of the curves of the type $u_{-}''$ gives us
   in $\Sigma$ a holomorphic curve with a removable singularities at
   the punctures, precisely as was done with $\pi_1 \circ u_1$ in the
   proof of Proposition~\ref{p:stretching-neck}. The disks of the type
   $u''_o$ project to genuine holomorphic disks in $(\Sigma,L)$.
   Components of $\bar{u}'''$ (if any) project to holomorphic spheres.
   Consider now the pearly trajectory $\overline{\gamma}$ obtained
   from the limit of the $\gamma_n$. We remove from
   $\overline{\gamma}$ the component $u_2$, $\bar{u}'''$, the vertical disks
   $u''_d$ (if there are any) and the holomorphic spheres $u''_s$ (if
   there are any), and then project the rest to $\Sigma$ via $\pi$.
   We thus obtain a genuine pearly trajectory $\gamma_{\Sigma}$ for
   $(\Sigma, L)$. Denote by $A \in H_2(M, \Gamma_L)$ the total
   homology class of the holomorphic curves (including $u_2$) in
   $\overline{\gamma}$. Since the vertical disks (if there are any)
   have constant projection the total homology class $B \in
   H_2(\Sigma,L)$ of the holomorphic curves involved in
   $\gamma_{\Sigma}$ is:
   $$B = \pi_*(A) - [v] - \pi_*[u_s''] - \pi_*[\bar{u}'''].$$ (In case there are no spheres 
   $u_s''$, we have $[u_s''] = 0$).

   Now for every $C \in H_2(\mathcal{N}, \Gamma_L)$ we have
   (see~\eqref{eq:p2_2-M-Delta}):
   $$\mu_{\Gamma_L}(C) = \mu_{L}(\pi_*(C)) + 2C \cdot [\Sigma],$$ 
   where $C \cdot \Sigma$ stands for the intersection number between
   $C$ and $\Sigma$. We thus obtain:
   \begin{equation} \label{eq:mu-gl-M}
      \mu_{\Gamma_L}([\bar{u}]) = \mu_{\Gamma_L}(A) = \mu_L(B) + 
      2c_1^{\Sigma}([v]) + 2c_1^{\Sigma}(\pi_*[u_s'']) + 2c_1^{\Sigma}(\pi_*[\bar{u}''']) + 2 A \cdot [\Sigma].
   \end{equation}
   We now claim that $A \cdot [\Sigma] \geq 0$. Indeed the class $A$
   is represented by $J_{R_n}$-holomorphic disks (those that appear in
   each of the $\gamma_n$'s) and $\Sigma$ is $J_{R_n}$-holomorphic.
   The claim follows from positivity of intersections.
   
   Next, by monotonicity we have $c_1^{\Sigma}([u_s'']) \geq 0$. By
   the same argument as in the proof of
   Proposition~\ref{p:stretching-neck} we also have $c_1^{\Sigma}[v]
   \geq 1$ (in contrast to $u''_s$, we explicitely assumed that $v$
   does occur). Going back to~\eqref{eq:mu-gl-M} we obtain the
   inequality $$\mu_{\Gamma_L}([\bar{u}]) \geq \mu_L(B) + 2,$$ which
   is the same as~\eqref{eq:proj-u-bar} in the proof of
   Proposition~\ref{p:stretching-neck}.

   The rest of the proof continues exactly as for
   Proposition~\ref{p:stretching-neck}.
\end{proof}

Having established Proposition~\ref{p:stretching-neck-M} we can prove
transversality for moduli spaces involved in $\mathcal{C}_M(\Gamma_L;
\widetilde{\data}_{\varepsilon})$ in the same way done
in~\S\ref{S:transversality}.

We now define the maps $i : \mathcal{C}^*(L; \data; \mathcal{A}) \to
\mathcal{C}_M^*(\Gamma_L; \widetilde{\data}_{\varepsilon})$ and $p :
\mathcal{C}_M^*(\Gamma_L; \widetilde{\data}_{\varepsilon}) \to
\mathcal{C}^{*-1}(L; \data; \mathcal{A})$ exactly as
in~\S\ref{S:short-exact}.

It remains to show that these remain chain maps also with respect to
the pearly differential $\widetilde{d}_M$ of $\Gamma_L $ in $M$.  The
proof of this goes along the same lines as that for $W$: we compare
pearly trajectories in $(M, \Gamma_L)$ with those on $(\Sigma, L)$.
In particular we project pearly trajectories from $(M, \Gamma_L)$ to
$(\Sigma, L)$ as we did for $(W, \Gamma_L)$. The discussion for the
lifting property which is presented in~\S\ref{S:lifting} applies here
with the following modifications. Lemma~\ref{l:lift-a-disk} shows that
any holomorphic disk $u : (D^2, \partial D^2) \to (\Sigma, L)$ admits
a unique holomorphic lift to a disk in $(M \setminus \Sigma,
\Gamma_L)$ having specified appropriate boundary conditions. In
addition, there is a family of lifts of $u$ to holomorphic disks which
intersect $\Sigma$.  For any such lift $\tilde{u}$ of $u$, we have
$\mu_{\Gamma_L} (\tilde{u}) = \mu_L (u) + 2 [u] \cdot [\Sigma]$ (here
$[u] \cdot [\Sigma]$ stands for the intersection product in homology).
A simple index computation shows that the virtual dimension of any
{\em lifted} trajectory which contains disks intersecting $\Sigma$ is
greater than zero, so these do not contribute to the differential. In
addition there may appear trajectories in $(M, \Gamma_L)$ whose
projections to $(\Sigma, L)$ are degenerate.  Analyzing possible
configurations of degenerated trajectories, one shows that the only
index $0$ trajectories which appear this way are trajectories
consisting of a single vertical disk in the fiber of $\pi : E_{r_0}
\to \Sigma$.  For every critical point $x$ on $L$ there is unique such
trajectory which connects $x''$ to $x'$ (its projection to $\Sigma$
consists of a single point $x$).

The above discussion yields the following identities for every $x \in
\textnormal{Crit}(f)$:
\begin{equation} \label{eq:chain-map-M}
   \begin{array}{rcl} \widetilde{d}_M (x')  &=& \widetilde{d}_W (x'), \\
      \widetilde{d}_M (x'') &=& \widetilde{d}_W (x'') - x' q \end{array}
\end{equation}
We have indentified here $\cplx_M^*(\Gamma_L;
\widetilde{\data}_{\varepsilon})$ with $\cplx_W^*(\Gamma_L;
\widetilde{\data}_{\varepsilon}) \otimes_{\Lambda} \mathcal{A}$ as
graded vector spaces. The additional summand $x' \otimes q$ in
$\widetilde{d}_M (x'')$ comes from the vertical disks described above.

A straighforward computation now shows that $i$ and $p$ are chain
maps.  We now have the following version of
Theorem~\ref{T:chain-map-1}:
\begin{thm} \label{T:chain-map-M}
	The maps $i$ and $p$ form a short exact sequence 
  \begin{equation*}
      \begin{CD}
         0 @>>> \cplx^*(L; \data;\mathcal{A}) @>{i}>>
         \cplx_M^*(\Gamma_L; \widetilde{\data}_{\varepsilon}) @>{p}>>
         \cplx^{*-1}(L; \data;\mathcal{A}) @>>> 0
      \end{CD}
  \end{equation*}
  of cochain complexes. For a generic choice of data $\data$ and an
  admissible corresponding data $\widetilde{\data}_{\varepsilon}$ the
  maps $i$ and $p$ are chain maps.  In particular, we have a long
  exact sequence
  \begin{equation*}
      \begin{CD}
         \cdots @>>> QH^{k}(L;\mathcal{A}) @>{\delta} >>
         QH^{k+2}(L;\mathcal{A}) @>{i}>> QH_M^{k+2}(\Gamma_L) @>{p}>>
         QH^{k+1}(L;\mathcal{A}) @>{\delta}>> \cdots
      \end{CD}
  \end{equation*}
  Moreover, this exact sequence in homology is canonical in the sense
  that it does not depend on the auxiliary data. The connecting
  homomorphism $\delta$ is given by quantum multiplication by a class
  $e'_F \in QH^2(L;\mathcal{A})$ (which does not depend on the
  auxiliary data), i.e.  $\delta(\alpha) = \alpha * e'_F$ for every
  $\alpha \in QH^*(L;\mathcal{A})$. The relation between $e'_F$ and
  the Euler-Floer class from Theorem~\ref{T:chain-map-1} is given by
  $e'_F = e_F - q$, where we view here $e_F$ as a class in
  $QH^2(L;\mathcal{A})$.
\end{thm}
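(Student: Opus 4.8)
The plan is to obtain Theorem~\ref{T:chain-map-M} by re-running the entire machinery of the preceding sections with $W$ replaced by $M$ and the coefficient ring $\Lambda$ replaced by $\mathcal{A}$, and then to isolate the one genuinely new point, namely the identification $e'_F = e_F - q$, by a direct chain-level computation of the connecting homomorphism. First, exactness of the short sequence is formal: by construction $i$ is injective with image the $\mathcal{A}$-span of the $x'$-generators, and $p$ is surjective with kernel exactly that span. That $i$ and $p$ are cochain maps for $\widetilde{d}_M$ follows from the identities~\eqref{eq:chain-map-M} together with the fact (Theorem~\ref{T:chain-map-1}) that they are cochain maps for $\widetilde{d}_W$: since $\widetilde{d}_M(x') = \widetilde{d}_W(x')$ we get $\widetilde{d}_M i(y) = i(dy)$; since $\widetilde{d}_M(y'') = \widetilde{d}_W(y'') - y' q$ and $p(y' q) = 0$ we get $p(\widetilde{d}_M y'') = p(\widetilde{d}_W y'') = dy = d\, p(y'')$; and $p(\widetilde{d}_M y') = p(\widetilde{d}_W y') = 0$.

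The well-definedness of $\mathcal{C}_M(\Gamma_L;\widetilde{\data}_\varepsilon)$, and the fact that its cohomology computes $QH_M(\Gamma_L)$, are obtained exactly as in~\S\ref{S:transversality} and~\S\ref{sb:well-def}, with Proposition~\ref{p:stretching-neck-M} used in place of Proposition~\ref{p:stretching-neck} to guarantee transversality of the relevant index-$0$ moduli spaces. Canonicity of the long exact sequence in cohomology then follows by repeating~\S\ref{S:aux-data}: one constructs comparison chain maps that lift to $\Gamma_L$ and checks that they commute with $i$ and $p$ on the chain level. Finally, that the connecting homomorphism is quantum multiplication by the class $e'_F := \delta(1) \in QH^2(L;\mathcal{A})$ is proved as in~\S\ref{S:products} and~\S\ref{S:floer-euler}: the multiplicative identities~\eqref{eq:i-p-prod} for $i$ and $p$ hold verbatim in the $M$-setting, since their proof used only the projection and lifting of product configurations, which go through after invoking Proposition~\ref{p:stretching-neck-M}; Proposition~\ref{p:FE-mult} then gives $\delta(\alpha) = \alpha * e'_F$ for all $\alpha$.

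It remains to compute $e'_F$. I would represent the unit $1 \in QH^0(L;\mathcal{A})$ by a cycle $c = \sum_{|x|=0} c_x\, x$ in $\mathcal{C}^0(L;\data;\mathcal{A})$; then $\tilde b = \sum_{|x|=0} c_x\, x''$ is a $p$-preimage of $c$, and $e'_F$ is represented by $\widetilde{d}_M \tilde b$, which by~\eqref{eq:chain-map-M} equals
\[
\widetilde{d}_M \tilde b \;=\; \widetilde{d}_W \tilde b \;-\; \Bigl(\textstyle\sum_{|x|=0} c_x\, x'\Bigr) q \;=\; i\bigl(\delta_W(c)\bigr) \;-\; i(c)\, q ,
\]
where $\delta_W$ is the connecting homomorphism of the $W$-sequence: the $x''$-part of $\widetilde{d}_W \tilde b$ vanishes because $c$ is a cycle (cf.~\eqref{Eq:lift-diff}), while its $x'$-part represents $\delta_W(c)$ by the computation of~\S\ref{S:floer-euler}. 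Passing to cohomology and using $\delta_W(1) = e_F$ and $[c]=1$ yields $e'_F = e_F - q$, and the same computation applied to a general cycle $c$ shows $\delta(\alpha) = \delta_W(\alpha) - q\alpha = \alpha*(e_F - q)$, consistent with Proposition~\ref{p:FE-mult}.

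The main obstacle is not the analysis — the compactness and transversality inputs are already supplied by Proposition~\ref{p:stretching-neck-M} — but rather the bookkeeping underlying~\eqref{eq:chain-map-M}: one must verify that the only new index-$0$ pearly trajectory in $(M,\Gamma_L)$ attached to a critical point $x$, beyond those already present in $(W,\Gamma_L)$, is the single vertical disk in the fibre class $F$ connecting $x''$ to $x'$, and that it enters with coefficient $q = q^{\overline{\mu}_{\Gamma_L}(F)}$ (using $\mu_{\Gamma_L}(F) = 2$, hence $N_{\Gamma_L} = 2$). This rests on an index count showing that every lifted trajectory containing a disk which meets $\Sigma$ has strictly positive virtual dimension, together with the classification of the degenerate projected configurations; this is the step that requires the most care, and it is precisely where the assumption that $N_L$ is even enters, through the compatibility of the coefficient extension $\Lambda \to \mathcal{A}$, $t \mapsto q^{N_L/2}$.
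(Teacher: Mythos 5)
Your proposal is correct and follows essentially the same route as the paper: exactness and the chain-map property are read off from the key identity~\eqref{eq:chain-map-M}, the analytic inputs are replaced by Proposition~\ref{p:stretching-neck-M}, and $e'_F = e_F - q$ is obtained from the snake-lemma computation of the connecting homomorphism (the paper phrases this as $\delta_M = \delta_W - q$, i.e.\ equation~\eqref{eq:boundary-M}). You also correctly isolate the genuinely delicate step, namely the classification of degenerate index-$0$ configurations in $(M,\Gamma_L)$ contributing the extra $x'q$ term, which is exactly where the paper concentrates its argument.
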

The independence of the choice of auxiliary data issues are treated in
a similar way to those in for $W$. Finally,~\eqref{eq:chain-map-M}
implies that the connecting homomorphisms $\delta
: QH^{k}(L) \to QH^{k+2}(L)$ in the sequences for $(M, \Gamma_L)$ and
that for $(W, \Gamma_L)$ are related as follows:
\begin{equation} \label{eq:boundary-M} 
   \delta_M = \delta_W - q.
\end{equation}
(Here $q$ stands for multiplication by $q$.) The fact that $e'_F =
e_F - q$ follows now from similar arguments as
in~\S\ref{S:floer-euler}.

\section{Further results and generalizations} \label{S:further} Here
we present a generalization of Theorem~\ref{T:exact-seq-1} that allows
to replace $\Sigma$ by a product $\Sigma \times Q$ with a symplectic
manifold $Q$. Here is the precise setting.

Let $(Q, \omega_Q)$ be a closed symplectic manifold. Let $L \subset
(\Sigma \times Q, \omega_{\Sigma} \oplus \omega_Q)$ be a Lagrangian
submanifold. Define the circle bundles $P_r \to \Sigma$ as
in~\S\ref{Sb:std-disk-bundle}. Denote by $\pi'_{r_0} : P_{r_0} \times
Q \to \Sigma \times Q$ the projection and define $$\Gamma_L = (F
\times \Id)({\pi'_{r_0}}^{-1}(L)) \subset W \times Q,$$ where $F$ is
the embedding from Proposition~\ref{P:embedding}. A simple computation
shows that $\Gamma_L$ is a Lagrangian submanifold of $W \times Q$ if we
endow this manifold with the symplectic structure
$$\omega_{r_0} = \omega \oplus e^{-r_0^2} \omega_Q.$$

We now fix $r_0$ once and for all and consider $\Gamma_L$ as
Lagrangian submanifold of $(W \times Q, \omega_{r_0})$. We have the
following version of Proposition~\ref{P:monotone-gl} which is proved
in~\cite{Bi:Nonintersections}:
\begin{prop} \label{P:monotone-gl-WxQ} Assume that either
   $\dim_{\mathbb{R}} \Sigma \geq 4$, or that $\dim_{\mathbb{R}}
   \Sigma = 2$ and $W = M \setminus \Sigma$ is subcritical. Let $(Q,
   \omega_{Q})$ be as above and $L \subset \Sigma \times Q$ be a
   Lagrangian submanifold. Let $\Gamma_L \subset W \times Q$ be the
   Lagrangian circle bundle over $L$ as constructed above.  Then:
   \begin{enumerate}
     \item The homomorphism $\iota_*: \pi_2(W \times Q \setminus
      \Delta \times Q, \Gamma_L) \to \pi_2(W\times Q, \Gamma_L)$,
      induced by the inclusion, is surjective. When $\dim_{\mathbb{R}}
      \Sigma \geq 6$, $\iota_*$ is an isomorphism. The same statement
      holds also for homology, i.e.  if one replaces $\pi_2$ by
      $H_2$.
     \item For every $B \in \pi_2 (W \times Q \setminus \Delta \times
      Q, \Gamma_L)$ we have:
      $$\mu_{\Gamma_L}(B) = \mu_L(\pi'_*(B)),$$
      where $\pi': (W \times \Delta) \times Q \to \Sigma \times Q$ is
      the projection induced by $W \setminus \Delta \to \Sigma$.
   \end{enumerate}
   In particular, if $L \subset \Sigma \times Q$ is monotone then
   $\Gamma_L \subset W \times Q$ is monotone too, and $N_{\Gamma_L} =
   N_L$.
\end{prop}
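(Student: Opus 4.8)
The plan is to reduce everything to Proposition~\ref{P:monotone-gl} (equivalently Proposition~4.1.A of \cite{Bi:Nonintersections}), treating the product $\Sigma\times Q$ in the role of the base of a standard symplectic disk bundle whose normal bundle is the pullback $\mathrm{pr}_\Sigma^*\mathcal{N}$. First I would record, via the embedding $F$ of Proposition~\ref{P:embedding}, the identification $W\times Q\setminus\Delta\times Q\cong(\mathcal{N}\setminus\Sigma)\times Q$ under which $\Gamma_L$ becomes ${\pi'_{r_0}}^{-1}(L)$ and $\pi'$ is the projection induced by $\pi:\mathcal{N}\setminus\Sigma\to\Sigma$. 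Since $(P_{r_0}\times Q,\Gamma_L)\to(\Sigma\times Q,L)$ is a circle bundle, $\pi'_*:\pi_2(W\times Q\setminus\Delta\times Q,\Gamma_L)\to\pi_2(\Sigma\times Q,L)$ is an isomorphism, exactly as in~\eqref{eq:pi_*}, and the same holds for $H_2$.

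For part (1) the key point is that the codimension of $\Delta\times Q$ in $W\times Q$ equals the codimension of $\Delta$ in $W$, which is $\geq\tfrac12\dim_{\mathbb{R}}M=n+1$ in general, and $\geq n+2$ when $W$ is subcritical (Proposition~\ref{P:embedding}(2),(4)); in particular $\mathrm{codim}(\Delta\times Q)\geq 3$ whenever $\dim_{\mathbb{R}}\Sigma\geq 4$, as well as when $\dim_{\mathbb{R}}\Sigma=2$ and $W$ is subcritical, and $\mathrm{codim}(\Delta\times Q)\geq 4$ when $\dim_{\mathbb{R}}\Sigma\geq 6$. Since $\Gamma_L$ is disjoint from $\Delta\times Q$, a general-position argument — perturbing singular $2$-chains, and $3$-dimensional homotopies of such chains, off the isotropic subcomplex $\Delta\times Q$ — shows that the inclusion induces a surjection on $\pi_2(\,\cdot\,,\Gamma_L)$ (and on $H_2$) in the first two cases and an isomorphism in the last.

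For part (2) I would reproduce the Maslov computation of \cite{Bi:Nonintersections} essentially verbatim, observing that the factor $Q$ is harmless. Using the connection $\nabla$, the complex bundle $T\big((\mathcal{N}\setminus\Sigma)\times Q\big)$ splits along $\Gamma_L$ as ${\pi'}^*T(\Sigma\times Q)\oplus V$, where $V$ is the vertical subbundle of $\mathcal{N}$, and along $\partial D$ the Lagrangian $T\Gamma_L$ is the horizontal lift of $TL$ together with the angular line $\mathbb{R}\partial_\theta\subset V$. By additivity of the Maslov index under this splitting, and since any disk $w$ in $(\mathcal{N}\setminus\Sigma)\times Q$ avoids the zero section of $\mathcal{N}$ — so $w^*V$ is trivialised over $D$ by the tautological non-vanishing section, under which $\partial_\theta=i\cdot(\text{section})$ — the line $\mathbb{R}\partial_\theta$ contributes $0$, giving $\mu_{\Gamma_L}(B)=\mu_L(\pi'_*(B))$ for every $B\in\pi_2(W\times Q\setminus\Delta\times Q,\Gamma_L)$.

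For the monotonicity conclusion I would combine (1) and (2) with a short area computation: for $w:(D,\partial D)\to\big((\mathcal{N}\setminus\Sigma)\times Q,\Gamma_L\big)$, Stokes' theorem applied to $\omega_{\mathrm{can}}=-d(e^{-r^2}\alpha^\nabla)$ (using that $r\equiv r_0$ on $\partial D\subset\Gamma_L$ and that $w$ avoids the zero section, so $\alpha^\nabla$ pulls back over all of $D$) yields $\int_D w^*\omega_{\mathrm{can}}=e^{-r_0^2}\int_D(\pi\circ w)^*\omega_\Sigma$, hence $\omega_{r_0}(B)=e^{-r_0^2}\,\omega_{\Sigma\oplus Q}(\pi'_*(B))$. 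With $\mu_{\Gamma_L}=\mu_L\circ\pi'_*$ and $L$ monotone, $\Gamma_L$ is then monotone with proportionality constant multiplied by $e^{r_0^2}$; and since $\pi'_*$ is an isomorphism while $\iota_*$ is surjective, $\mu_{\Gamma_L}$ has the same image as $\mu_L$, so $N_{\Gamma_L}=N_L$. The only genuinely new ingredient relative to the case $Q=\mathrm{pt}$ is the codimension bookkeeping in part (1); I do not expect a real obstacle, the mild technical care being to run the general-position argument cleanly for homology against the CW-complex $\Delta\times Q$.
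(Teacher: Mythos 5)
Your proof is correct, and there is little to compare against: the paper does not supply its own argument for this proposition but delegates it to \cite{Bi:Nonintersections} (Proposition~4.1.A there), and for the $Q=\mathrm{pt}$ version its proof of Proposition~\ref{P:monotone-gl} only remarks that part~(1) \emph{``follows easily from the fact that $\dim\Delta\le\tfrac12\dim_{\mathbb{R}}M$''} and again cites the reference for part~(2). Your write-up is the expected unwinding of exactly that plan: for part~(1) the general-position reduction using $\operatorname{codim}_{W\times Q}(\Delta\times Q)=\operatorname{codim}_W\Delta\ge 3$ (resp.\ $\ge 4$ when $\dim_{\mathbb{R}}\Sigma\ge 6$); for part~(2) the symplectic splitting $T\bigl((\mathcal{N}\setminus\Sigma)\times Q\bigr)\big|_{\Gamma_L}\cong(\pi')^*T(\Sigma\times Q)\oplus V$ with the tautological nonvanishing section of $w^*V$ rendering the angular Lagrangian line constant; and for the monotonicity conclusion the Stokes identity $\int_D w^*\omega_{\mathrm{can}}=e^{-r_0^2}\int_D(\pi\circ w)^*\omega_\Sigma$ combined with the surjectivity from part~(1). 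The one tiny slip is in the last sentence: with the convention $\omega=\lambda\,\mu$, the monotonicity constant of $\Gamma_L$ is $e^{-r_0^2}\lambda$, not $e^{+r_0^2}\lambda$; this is purely a wording issue and does not affect the conclusion.
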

Note that if $L \subset \Sigma \times Q$ is monotone then in
particular $(Q, \omega_{Q})$ is a spherically monotone manifold, i.e.
there exists $\lambda > 0$ so that $\omega_Q(S) = \lambda c^{Q}_1(S)$
for every $S \in \pi_2(\Sigma)$.

We now have the following generalization:
\begin{thm} \label{t:chain-map-WxQ}
   Theorems~\ref{T:chain-map-1},~\ref{T:exact-seq-positive}, the
   discussion in~\S\ref{sb:qh-vs-sing} as well as
   Propositions~\ref{p:quantum-part-of-e_F},~\ref{p:quant-rest-c_1}
   continue to hold for monotone $L \subset \Sigma \times Q$ and
   $\Gamma_L \subset W \times Q$.
\end{thm}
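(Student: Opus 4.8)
\emph{Plan.} The strategy is to re-run the entire argument of Sections~\ref{S:short-exact}--\ref{S:floer-euler-more} verbatim, with $\Sigma$ replaced by $\Sigma\times Q$, $M$ by $M\times Q$, $W$ by $W\times Q$, and the skeleton $\Delta$ by $\Delta\times Q$, exploiting the fact that every construction is ``fibered over the $Q$-factor''. Indeed the circle bundle $\Gamma_L\to L$ is still the pull-back of $P_{r_0}\to\Sigma$ along $L\hookrightarrow\Sigma\times Q\xrightarrow{\mathrm{pr}_\Sigma}\Sigma$, so it is genuinely a circle bundle over $L$; the relevant disk bundle is $E_{r_0+\varepsilon}\times Q$ over $\Sigma\times Q$, carrying its fiberwise $S^1$-rotation action; and the projection $\pi':(W\times Q)\setminus(\Delta\times Q)\to\Sigma\times Q$ is defined away from $\Delta\times Q$ exactly as in~\S\ref{Sb:lag-s1}. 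The only structural inputs used throughout are the two conclusions of Proposition~\ref{P:monotone-gl-WxQ} (which we assume): the Maslov identity $\mu_{\Gamma_L}(B)=\mu_L(\pi'_*(B))$, and the fact that $\pi'_*\colon\pi_2\big((W\times Q)\setminus(\Delta\times Q),\Gamma_L\big)\to\pi_2(\Sigma\times Q,L)$ is an isomorphism (the analogue of~\eqref{eq:pi_*}).

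\emph{Set-up.} First one builds the short exact sequence of pearl complexes precisely as in~\S\ref{S:short-exact}: pick a Morse function $f$ on $L$ and a metric, lift $-\nabla f$ to a negative almost gradient vector field $X_\varepsilon$ on $\Gamma_L$ using a connection on $\mathcal{N}$ pulled back to $P_{r_0}\times Q$, fix a generic tame almost complex structure $J_{\Sigma\times Q}$ on $\Sigma\times Q$, and construct the admissible stretched almost complex structures $J_R$ on $W\times Q$ which are $(J_R,J_{\Sigma\times Q})$-holomorphic for $\pi'$ outside a small neighbourhood of $\Delta\times Q$ and have a long neck along $P_{r_0}\times Q$. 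The maps $i$ and $p$ are then given by the same formulas on critical points of $f_\varepsilon$ as in~\S\ref{S:short-exact}.

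\emph{The neck-stretching step, which is the main obstacle.} The heart of the matter, and the only place where anything genuinely new occurs, is the analogue of Proposition~\ref{p:stretching-neck}. One stretches the neck along the hypersurface $P_{r_0}\times Q\subset M\times Q$. This is no longer a contact hypersurface (since $\omega_Q$ need not be exact), but it carries a natural \emph{stable Hamiltonian structure}: take the stabilizing $1$-form to be $\alpha^{\nabla}|_{P_{r_0}}$ and the closed $2$-form to be $-\pi^*\tau+c\,\omega_Q$ for a suitable constant $c$, whose Reeb flow is still the rotation along the circle fibers of $P_{r_0}\to\Sigma$. The SFT compactness theorem of~\cite{BEHWZ:compactness} applies in this stable Hamiltonian generality, so a sequence $\gamma_n\in\mathcal{P}_0(J_{R_n})$ with $R_n\to\infty$ that leaves $E_{r_0+\varepsilon}\times Q$ converges to a holomorphic building with components in $W^\pm\times Q$ and in $\mathbb{R}\times P_{r_0}\times Q$. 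Projecting via $\pi'$ (after the same codimension-$>2$ perturbation near $\Delta\times Q$ as in the original proof), the lower component produces a non-constant sphere $v$ in $\Sigma\times Q$ with $c_1^{\Sigma\times Q}([v])\geq 1$ --- here one uses that $\Sigma\times Q$ is spherically monotone, which follows from monotonicity of $L$ (see the remark following Proposition~\ref{P:monotone-gl-WxQ}) --- and the index estimate~\eqref{eq:proj-u-bar}, now with $\mu_{\Gamma_L}=\mu_L\circ\pi'_*$, gives $\mu_{\Gamma_L}([\overline u])\geq\mu_L(B)+2$, contradicting $\delta=0$ exactly as before. The point requiring care is precisely that the $\omega$-energy bound coming from monotonicity of $\Gamma_L\subset (W\times Q,\omega_{r_0})$ survives the presence of the $e^{-r_0^2}\omega_Q$ summand in $\omega_{r_0}$, so that Lemmas~9.2 and~10.3 of~\cite{BEHWZ:compactness} are applicable; everything else in this step is a routine transcription.

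\emph{The remaining steps.} Once Proposition~\ref{p:stretching-neck} holds in this setting, the rest follows mechanically. Transversality (\S\ref{S:transversality}) splits as before: disks escaping $E_{r_0+\varepsilon}\times Q$ are handled by genericity of $J_R$ off the disk bundle, while disks staying inside project to $J_{\Sigma\times Q}$-holomorphic disks in $\Sigma\times Q$ (regular by genericity of $J_{\Sigma\times Q}$) with a fiber component in $\mathbb{C}$ that is automatically regular; Proposition~\ref{p:transversality-pearl} then goes through using the same fiberwise $S^1$-actions and the fact that index-$0$ trajectories on $(W\times Q,\Gamma_L)$ project to index-$\leq 1$ trajectories on $(\Sigma\times Q,L)$. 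The lifting lemma~\ref{l:lift-a-disk} is unchanged: pull back $\mathcal{N}$ along $D\xrightarrow{u}\Sigma\times Q\xrightarrow{\mathrm{pr}_\Sigma}\Sigma$, trivialize holomorphically, and solve the planar Dirichlet problem, the lifts forming an $S^1$. Hence the counting identities~\eqref{Eq:lift-1} and~\eqref{Eq:lift-2} hold, so $i$ and $p$ are chain maps (\S\ref{S:chain-maps}), the cohomological sequence is exact and canonical (\S\ref{S:aux-data}), the product identities~\eqref{eq:i-p-prod} hold (\S\ref{S:products}), and the connecting map is quantum multiplication by $e_F=\delta(1)$ (\S\ref{S:floer-euler}). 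The positive version (Theorem~\ref{T:exact-seq-positive}) and the comparison with the singular Gysin sequence (\S\ref{sb:qh-vs-sing}), yielding $\sigma(e_F)=e$ where $e$ is the Euler class of $\Gamma_L\to L$, follow from monotonicity exactly as stated; and Propositions~\ref{p:quantum-part-of-e_F} and~\ref{p:quant-rest-c_1} carry over once one reads $c\in H^2(\Sigma\times Q;\mathbb{Z}_2)$ as the mod-$2$ reduction of the pull-back of $c_1^{\mathcal{N}}$ to $\Sigma\times Q$ and uses the $QH(\Sigma\times Q;\Lambda)$-module structure on $QH(L)$.
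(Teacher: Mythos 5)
Your overall plan is the right one, and you are correct that everything hinges on re-proving the analogue of Proposition~\ref{p:stretching-neck}. Your observation that $P_{r_0}\times Q$ carries a stable Hamiltonian structure (with stabilizing form $\alpha^\nabla$ and 2-form $-\pi^*\tau + c\,\omega_Q$) is a legitimate and arguably cleaner way to set up the SFT compactness input; the paper does not say this explicitly, and you have usefully spelled out why~\cite{BEHWZ:compactness} still applies.

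However, there is a genuine gap in your argument, and it sits earlier than the neck-stretching itself. You fix a ``generic tame almost complex structure $J_{\Sigma\times Q}$ on $\Sigma\times Q$'' and lift it to $\widetilde{J}$, but for a non-split $J_{\Sigma\times Q}$ the maximum principle that confines holomorphic disks to $\{r\ge r_0\}$ breaks down. Concretely, $d^c\log r = 2\pi\alpha^\nabla$ and $dd^c\log r = -2\pi\,\pi^*\tau$ (pulled back along $\mathcal{N}\times Q \to \Sigma$), so the superharmonicity of $\log r\circ u$ for a $\widetilde{J}$-holomorphic disk $u$ requires $(\mathrm{pr}_\Sigma\circ\pi'\circ u)^*\tau\ge 0$; this holds when $J_{\Sigma\times Q}$ is split, but not for a generic perturbation, since taming only guarantees $(\mathrm{pr}_\Sigma\circ\bar u)^*\omega_\Sigma + (\mathrm{pr}_Q\circ\bar u)^*\omega_Q\ge 0$ and the $\Sigma$-contribution alone can be negative. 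Put differently, $W\times Q$ is no longer symplectically convex at infinity (the $\omega_Q$-summand has no primitive), so nothing prevents disks with boundary on $\Gamma_L$ from escaping toward $\Sigma$, and with that failure both the confinement to the annulus bundle and the compactness underlying the SFT limit are lost. You cannot simply ``re-run the argument verbatim'' at this point.

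The paper closes this gap with a three-step device that you would need to incorporate: (i) restrict $J_{\Sigma\times Q}$ to be a \emph{small} generic perturbation of the split structure $J_\Sigma\oplus J_Q$; (ii) modify the lifted $\widetilde J$ on $\textnormal{Int}\,E_{r_1}\times Q$ (for some $0<r_1<r_0$) so that it coincides with the split lift $J_W\oplus J_Q$ on $\textnormal{Int}\,E_{r_1/2}\times Q$, where the projection to $W$ is then holomorphic and the maximum principle applies to keep disks away from $\Sigma$; and (iii) since step (ii) destroys holomorphicity of the projection to $\Sigma\times Q$ precisely on $E_{r_1}\times Q$ (which would spoil the projection argument in Proposition~\ref{p:stretching-neck}), run a Gromov-compactness argument letting $J_{\Sigma\times Q}\to J^0_{\Sigma\times Q}$ to show that for $J_{\Sigma\times Q}$ sufficiently close to split, all relevant holomorphic disks actually stay in $(M\setminus E_{r_0'})\times Q$ for a fixed $r_1<r_0'<r_0$, i.e., away from the perturbation region. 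Only after this is the rest of your outline --- the stretching, the index estimate with sphere bubbles controlled by spherical monotonicity of $\Sigma\times Q$, transversality, lifting, canonicity and products --- a routine transcription.
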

The proof is very similar to the proofs of the analogous statements
for the case $Q = \textnormal{pt}$, i.e. $L \subset \Sigma$ and
$\Gamma_L \subset W$. Still, there are a few points where some
adjustments are needed. We indicate them below.

First of all, the construction of the chain maps $i$ and $p$ is the
same as before. As for the almost complex structures, we use the
following adjustments. Fix a generic $\omega_{\Sigma}$-tamed almost
complex structure $J_{\Sigma}$ on $\Sigma$ and an $\omega_Q$-tamed
almost complex structure $J_Q$ on $Q$. Let $J^0_{\Sigma \times Q} =
J_{\Sigma} \oplus J_Q$ be the split almost complex structure on
$\Sigma \times Q$. We will work with almost complex structures 
$J_{\Sigma \times Q}$ on $\Sigma \times Q$ that are generic small
perturbations of $J^0_{\Sigma \times Q}$.  This class is obviously
enough in order to obtain transversality for the pearl complex of $L
\subset \Sigma \times Q$. Given such a generic $J_{\Sigma \times Q}$
we construct, as in~\S\ref{S:stretching}, the almost complex
structures on $\mathcal{N} \times Q$, $M \times Q$, $W \times Q$ etc.
as well as their stretched versions on $W^R \times Q$ etc. We denote
the resulting almost complex structure by $\widetilde{J}_{\Sigma
  \times Q}$ (we omit here the parameter $R$ to simplify the
notation). Note that $W \times Q$ is not symplectically convex at
infinity anymore, and the maximum principle does not apply due to the
$Q$ factor. To go about this difficulty we fix $0<r_1< r_0$ and adjust
$\widetilde{J}_{\Sigma \times Q}$ on $(\textnormal{Int\,} E_{r_1})
\times Q$ so that it coincides with $J_W \oplus J_Q$ (i.e.  the lift
of $J^0_{\Sigma \times Q}$) on $(\textnormal{Int\,} E_{r_1/2}) \times
Q$.  We denote the resulting almost complex structure by
$\widetilde{J}'_{\Sigma \times Q}$ and call them admissible.  Such
almost complex structures are enough in order to ensure compactness
for holomorphic disks in $W \times Q$ with boundary on $\Gamma_L$. The
reason is that the projection to $W$ is holomorphic on
$(\textnormal{Int\,} E_{r_1/2}) \times Q$ and the maximum principle
applies to these projections. Thus holomorphic disks with boundary on
$\Gamma_L$ cannot escape to infinity.

The preceding construction of admissible almost complex structures
creates however a new problem. The problem is that due to the
perturbation in $E_{r_1} \times Q$ these almost complex structures are
not compatible with the projection $(W \setminus U)\times Q \to
\Sigma \times Q$ in the domain $(E_{r_1} \times Q$ (in the sense that
the projection is not holomorphic anymore). This compatibility was
crucial in the proof of Proposition~\ref{p:stretching-neck}.  To solve
this problem, fix $r_0'$ such that $0< r_1 < r_0' < r_0$.  We claim
that for $J_{\Sigma \times Q}$ close enough to $J^0_{\Sigma \times Q}$
and admissible $\widetilde{J}'_{\Sigma \times Q}$'s induced by such
$J_{\Sigma \times Q}$'s the following holds: all $\widetilde{J}$
holomorphic disks $u:(D,
\partial D) \to (W \times Q, \Gamma_L)$ lie in the domain $(M
\setminus E_{r_0'}) \times Q$. Indeed if the contrary would happen
then there exists a sequence $J_n \to J^0_{\Sigma \times Q}$ on
$\Sigma \times Q$ and a sequence of corresponding admissible almost
complex structures $\widetilde{J}'_n$ on $W \times Q$ together with
$\widetilde{J}'_n$-holomorphic disks $u_n$ whose image intersects
$E_{r_0'}$ for every $n$. In the limit, when $n \to \infty$,
$\widetilde{J}'_n$ converges to a split almost complex structure
$\widetilde{J}_0 = J_{W} \oplus J_{Q}$ and (after passing to a
subsequence) the disks $u_n$ converge to a
$\widetilde{J}_0$-holomorphic curve $u_{\infty}$ (with some bubble
components) with boundary on $\Gamma_L$. As $\widetilde{J}_0$ is a
split almost complex structure the projection of $u_{\infty}$ to $W$
is $J_W$ holomorphic.  The projection of its boundary lies in
$P_{r_0}$ and there is an interior point lying in $E_{r_0'}$. This
contradicts the maximum principle. It now follows that all pearly
trajectories lie above the hypersurface $P_{r_0'} \times Q$, where the
projection to $\Sigma \times Q$ is indeed holomorphic.

There is yet another point in the proof of
Proposition~\ref{p:stretching-neck} where an additional argument is
needed. One has to take care of another possible configuration of
holomorphic curves appearing in the limit while stretching the neck.
Namely, holomorphic spheres that might appear in the holomorphic
building $\bar{u}$ as bubbles from the limit of the sequence
$u_{n_k}$. These spheres might appear now since $W \times Q$ is not
exact symplectic manifold anymore, due to the $Q$ factor.  However,
due to the monotonicity of $Q$ these spheres have positive Chern
numbers hence the total Maslov index of $\bar{u}$ still drops after
removing them, and a similar argument to the proof of
Proposition~\ref{p:stretching-neck} goes through.

The other components in the proof of
Proposition~\ref{t:chain-map-WxQ}, such as the lifting and the
transversality are carried out in a similar way to the case $Q =
\textnormal{pt}$ with almost no significant adjustments.
\Qed

\section{Applications and examples} \label{S:applications-exp}

Recall that a symplectic manifold $(\Sigma, \omega_{\Sigma})$ is
called {\em spherically monotone} if there exists $\lambda > 0$ such
that $\omega_{\Sigma}(S) = \lambda c^{\Sigma}_1(S)$ for every $S \in
\pi_2(S)$. We define the minimal Chern number of $\Sigma$ to be:
$$C_{\Sigma} = \min \{ c^{\Sigma}_1(S) \mid S \in \pi_2(S), \; 
c^{\Sigma}_1(S) > 0\}.$$ We use the convention that $\min \emptyset =
\infty$ (e.g. in case $\pi_2(\Sigma)=0$).

\begin{thm} \label{t:QH-Sigma-2-priodic} Let $(\Sigma,
   \omega_{\Sigma})$ be a spherically monotone symplectic manifold
   with minimal Chern number $C_{\Sigma}$. Suppose that $(\Sigma,
   \omega_{\Sigma})$ can be embedded as a symplectic hyperplane
   section in a symplectic manifold $M$ so that $M \setminus \Sigma$
   is subcritical. Then $C_{\Sigma} < \infty$ and
   $H^{*(\textnormal{mod} 2C_{\Sigma})}(\Sigma;\mathbb{Z}_2)$ is
   $2$--periodic, i.e. for every $k \in \mathbb{Z}$ we have:
   $$\bigoplus_{i \in \mathbb{Z}} 
   H^{k + 2iC_{\Sigma}}(\Sigma;\mathbb{Z}_2) \cong \bigoplus_{i \in
     \mathbb{Z}} H^{k +2 + 2iC_{\Sigma}}(\Sigma;\mathbb{Z}_2).$$
\end{thm}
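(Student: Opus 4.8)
The plan is to realize $QH^*(\Sigma)$ as the self Floer cohomology of a monotone Lagrangian living in a product of the form covered by Theorem~\ref{t:chain-map-WxQ}, and then to invoke the product version of Corollary~\ref{C:subcrit-1}. Set $Q=\bar{\Sigma}:=(\Sigma,-\omega_{\Sigma})$; since $c_1^{\bar{\Sigma}}=-c_1^{\Sigma}$, the spherical monotonicity of $\Sigma$ (with constant $\lambda>0$) passes to $\bar{\Sigma}$ with the same $\lambda$, and the diagonal $\Delta\subset(\Sigma\times\bar{\Sigma},\,\omega_{\Sigma}\oplus(-\omega_{\Sigma}))$ is a Lagrangian submanifold. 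Doubling a holomorphic disk with boundary on $\Delta$ to a holomorphic sphere $S$ in $\Sigma$ (of Maslov index $2c_1^{\Sigma}(S)$ and area $\omega_{\Sigma}(S)$) shows that $\Delta$ is monotone with $N_{\Delta}=2C_{\Sigma}$, and gives the well-known identification of $HF^*(\Delta)$ with the quantum cohomology $QH^*(\Sigma)$ as graded rings, where the Novikov variable has degree $N_{\Delta}=2C_{\Sigma}$. In particular $HF^*(\Delta)\neq 0$, and unravelling the grading,
\[
HF^k(\Delta)\;\cong\;QH^k(\Sigma)\;\cong\;\bigoplus_{i\in\mathbb{Z}}H^{k+2iC_{\Sigma}}(\Sigma;\mathbb{Z}_2)\;=\;H^{k\,(\textnormal{mod}\,2C_{\Sigma})}(\Sigma;\mathbb{Z}_2).
\]

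First I would check that $C_{\Sigma}<\infty$. If $C_{\Sigma}=\infty$ then $c_1^{\Sigma}$ vanishes on $\pi_2(\Sigma)$, hence so does $\omega_{\Sigma}$ by monotonicity; then $\Delta$, and likewise $\Gamma_{\Delta}\subset W\times\bar{\Sigma}$ by Proposition~\ref{P:monotone-gl-WxQ}, admit no non-constant holomorphic disks, so $HF(\Gamma_{\Delta})\cong H^*(\Gamma_{\Delta};\mathbb{Z}_2)\neq 0$. On the other hand $\Gamma_{\Delta}$ is displaceable in $W\times\bar{\Sigma}$: its projection to $W$ lies in a compact subset of the subcritical Weinstein manifold $W$, which can be displaced there, and the displacing Hamiltonian extends to $W\times\bar{\Sigma}$ by pullback along the projection to $W$; hence $HF(\Gamma_{\Delta})=0$, a contradiction. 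Therefore $C_{\Sigma}<\infty$ and $N_{\Delta}=2C_{\Sigma}\geq 2$, so the full monotone machinery of Theorem~\ref{t:chain-map-WxQ} applies to $\Delta\subset\Sigma\times\bar{\Sigma}$.

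Now apply the Floer--Gysin long exact sequence of Theorem~\ref{t:chain-map-WxQ} to $\Delta\subset\Sigma\times\bar{\Sigma}$. As in the previous paragraph $\Gamma_{\Delta}\subset W\times\bar{\Sigma}$ is displaceable, so $HF(\Gamma_{\Delta})=0$, and exactness of $\cdots\to HF^k(\Delta)\xrightarrow{\,*e_F\,}HF^{k+2}(\Delta)\to HF^{k+2}(\Gamma_{\Delta})\to\cdots$ forces quantum multiplication by $e_F$ to be an isomorphism $HF^k(\Delta)\to HF^{k+2}(\Delta)$ in every degree $k$; thus $HF^*(\Delta)$ is $2$--periodic. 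Transporting this periodicity through the isomorphism of the first paragraph turns $HF^k(\Delta)\cong HF^{k+2}(\Delta)$ into precisely the asserted isomorphism $\bigoplus_{i}H^{k+2iC_{\Sigma}}(\Sigma;\mathbb{Z}_2)\cong\bigoplus_{i}H^{k+2+2iC_{\Sigma}}(\Sigma;\mathbb{Z}_2)$. The one ingredient that is not a direct citation of the results above is the first paragraph's identification $HF^*(\Delta)\cong QH^*(\Sigma)$ inside the present $\mathbb{Z}_2$-coefficient monotone (pearl-complex) framework, including the matching of gradings and of the coefficient rings; this is the step I expect to require the most care, everything else being an application of Theorem~\ref{t:chain-map-WxQ} together with elementary bookkeeping of degrees.
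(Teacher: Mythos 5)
Your argument is essentially the paper's proof: take the diagonal $\Delta\subset(\Sigma\times\Sigma,\omega_\Sigma\oplus(-\omega_\Sigma))$, note that subcriticality of $W$ forces $QH(\Gamma_\Delta)=0$, and then read the $2$--periodicity off the Floer--Gysin sequence of Theorem~\ref{t:chain-map-WxQ} together with $HF^*(\Delta)\cong (H(\Sigma;\mathbb{Z}_2)\otimes\Lambda)^*$. Two small remarks: for the conclusion you only need this last identification as graded $\Lambda$-modules (which is what the paper cites from~\cite{FO3:Chap-8, FO3-book-1}), not as rings --- the ring-level statement is flagged in Remark~\ref{r:QH-Sigma} as still requiring transversality care for anti-holomorphic involutions, so it is safer not to lean on it; and your explicit treatments of $C_\Sigma<\infty$ and of displacing $\Gamma_\Delta$ inside $W\times\bar\Sigma$ via a Hamiltonian pulled back from $W$ are useful fillers for steps the paper leaves implicit.
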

The simplest example when this happens is $\Sigma = {\mathbb{C}}P^n$.
Then we have $C_{\Sigma} = n+1$ and the $2$-periodicity is easy to
verify. More examples of $\Sigma \subset M$ with subcritical
complement can be found in~\cite{Bi-Je:small-dual}.

A theorem similar to~\ref{t:QH-Sigma-2-priodic}, with coefficients in
$\mathbb{Z}$, has been recently obtained in~\cite{Bi-Je:small-dual},
without any appeal to Lagrangian submanifolds. The theorem
in~\cite{Bi-Je:small-dual} deals with projectively embedded algebraic
manifolds which have a so called {\em small dual}. This class of
manifolds is closely related to the subcriticality of $M \setminus
\Sigma$ (see~\cite{Bi-Je:small-dual} for more details).

\begin{proof}[Proof of Theorem~\ref{t:QH-Sigma-2-priodic}]
   We will derive our theorem from Theorem~\ref{t:chain-map-WxQ}.

   Put $(Q, \omega_Q) = (\Sigma, -\omega_{\Sigma})$ so that 
   $$(\Sigma \times Q, \omega_{\Sigma} \oplus \omega_Q) = 
   (\Sigma \times \Sigma, \omega_{\Sigma} \oplus -\omega_{\Sigma}).$$
   Let $L = \{(x,x) \mid x \in \Sigma \} \subset \Sigma \times Q$ be
   the diagonal embedding of $\Sigma$. Then $L$ is Lagrangian and it
   is easy to see that it is monotone with minimal Maslov number
   $N_L = 2C_{\Sigma}$.

   Put $W = M \setminus \Sigma$ and let $\Gamma_L \subset W \times Q$
   be the Lagrangian circle bundle over $L$ as constructed
   in~\S\ref{S:further}. By Proposition~\ref{P:monotone-gl-WxQ} $\Gamma_L$ is
   monotone too and $N_{\Gamma_L} = N_L = 2C_{\Sigma}$.

   Since $W$ is subcritical we have $HF(\Gamma_L)=0$ hence
   $QH(\Gamma_L)=0$. By Theorem~\ref{t:chain-map-WxQ} the Floer-Gysin
   long exact sequence splits into many isomorphisms:
   \begin{equation} \label{eq:QH-k=QH-k+2}
      QH^k(L) \cong QH^{k+2}(L).
   \end{equation}

   Next recall that there is a graded isomorphism of $\Lambda$
   modules: $QH^*(L) \cong HF^*(L,L)$. It is well known that for $L =
   \textnormal{diagonal} \subset (\Sigma \times \Sigma,
   \omega_{\Sigma} \oplus - \omega_{\Sigma})$ the self Floer
   cohomology $HF^*(L,L)$ is isomorphic as a graded $\Lambda$-module
   to $(H(L;\mathbb{Z}_2) \otimes \Lambda)^*$ (see
   e.g.~\cite{FO3:Chap-8, FO3-book-1}). The latter is just
   $(H(\Sigma;\mathbb{Z}_2) \otimes \Lambda)^*$.  Finally note that
   for every $k \in \mathbb{Z}$ we have:
   $$(H(\Sigma;\mathbb{Z}_2) \otimes \Lambda)^k = \bigoplus_{i \in \mathbb{Z}}
   H^{k+2iC_{\Sigma}}(\Sigma;\mathbb{Z}_2)t^{-i}.$$
\end{proof}
\begin{rem} \label{r:QH-Sigma} The isomorphism~\eqref{eq:QH-k=QH-k+2}
   is given by quantum multiplication by the Floer-Euler class $e_F
   \in QH^2(L)$. It follows that $e_F$ is an invertible class with
   respect to the Lagrangian quantum product. By
   Proposition~\ref{p:quant-rest-c_1} (see also
   Theorem~\ref{t:chain-map-WxQ}) the class $e_F$ can be written as
   the quantum restriction $e_F = r_{\scriptscriptstyle
     L}(\textnormal{pr}^*c)$. Here $c \in H^2(\Sigma;\mathbb{Z}_2)$ is
   the modulo-$2$ reduction of the first Chern class
   $c^{\mathcal{N}}_1 \in H^2(\Sigma; \mathbb{Z})$ of
   the normal bundle of $\Sigma$ in $M$, $\mathcal{N} \to \Sigma$, and
   $pr : \Sigma \times \Sigma \to \Sigma$ is the projection on the
   second factor.

   Next, note that the isomorphism $QH^*(L) \cong
   (H(\Sigma;\mathbb{Z}_2) \otimes \Lambda)^*$ can be rewritten as an
   isomorphism between the Lagrangian quantum cohomology of $L$ and
   the symplectic quantum cohomology of $\Sigma$: $QH^*(L) \cong
   QH^*(\Sigma)$. The latter isomorphism is, at least by a folklore
   result, not only an isomorphism of $\Lambda$-modules but in fact an
   isomorphism of rings (where both rings are endowed with their
   respective quantum products). The proof of this fact is rather
   straightforward, modulo transversality issues that arise when
   working with almost complex structure $J$ on $\Sigma \oplus \Sigma$
   for which the involution $(x,y) \to (y,x)$ becomes
   anti-holomorphic.
   
   Assuming this, it follows that $c \in H^2(\Sigma;\mathbb{Z}_2)$ is
   an invertible element in $QH(\Sigma)$ with respect to the quantum
   product. (c.f~\cite{Bi-Je:small-dual} for related algebro-geometric
   results over $\mathbb{Z}$.)
\end{rem}

\subsection{Proof of the Corollaries from~\S\ref{Sb:applications}}
\label{sb:proofs-applic}

\begin{proof}[Proof of Corollary~\ref{C:subcrit-1}]
   Since $W$ is subcritical any compact subset in $W$ is Hamiltonianly
   displaceable in the Weinstein completion of $W$
   (see~\cite{Bi-Ci:Stein}). In particular, $HF(\Gamma_L)=0$.
   Substituting this into the Floer-Gysin long exact sequence of
   Theorem~\ref{T:exact-seq-1} we obtain that multiplication by the
   Floer-Euler class $*e_F: HF^i(L) \to HF^{i+2}(L)$ is an isomorphism
   for every $i \in \mathbb{Z}$. This shows that $HF^i(L) \cong
   HF^{i+2}(L)$.

   Assume now that $HF(L) \neq 0$. Denote by $1 \in HF^0(L)$ the
   unity. Then there exists $a \in HF^{-2}(L)$ such that $a*e_F = 1$,
   hence $e_F$ is invertible.
\end{proof}

\begin{proof}[Proof of Corollary~\ref{C:subcrit-2}]
   As in the proof of Corollary~\ref{C:subcrit-1} we obtain that $e_F$ is
   invertible and since $HF(L) \neq 0$ we have $e_F \neq 0$ (note
   that, at least formally a zero element in the zero ring is
   invertible, so must exclude this case). By the discussion
   in~\S\ref{sb:qh-vs-sing} and in particular
   Remark~\ref{r:spec-N_L>3} we deduce that the modulo-$2$ reduction
   $e \in H^2(L;\mathbb{Z}_2)$ of the classical Euler class of the
   bundle $\Gamma_L \to L$ is not zero. This immediately implies that
   $\Gamma_L \to L$ is not trivial.

   Denote now the $\mathbb{Z}$-Euler class of $\Gamma_L \to L$ by
   $e_{\mathbb{Z}} \in H^2(L;\mathbb{Z})$ and by $e_{\mathbb{R}} \in
   H^2(L;\mathbb{R})$ its projection into the real cohomology.
   Clearly $e_{\mathbb{R}}=0$ since $e_{\mathbb{R}}$ is proportional
   to $\omega_{\Sigma}$ and $L$ is Lagrangian with respect to
   $\omega_{\Sigma}$. It follows that $e_{\mathbb{Z}}$ is torsion.
\end{proof}

We now turn to the proof of Corollary~\ref{C:quadric-1}. We will
actually prove the following more general version:
\begin{cor} \label{C:quadric-2} Let $L \subset \Sigma$ be a Lagrangian
   submanifold. Assume that $n = \dim L \geq 2$ and that $L$ satisfies
   one of the following conditions:
   \begin{enumerate}
     \item $H_1(L;\mathbb{Z}) = 0$.
     \item $n \geq 3$, $H_1(L;\mathbb{Z}) = 0$ is $2$-torsion (i.e.
      for every $\alpha \in H_1(L;\mathbb{Z})$ we have $2 \alpha = 0$)
      and either $\dim_{\mathbb{Z}_2} H^1(L;\mathbb{Z}_2) > 1$ or
      there exists $1 < i < n-1$ such that $H^i(L;\mathbb{Z}_2) \neq
      0$.
     \item $L$ is monotone and $QH(L) \neq 0$.
   \end{enumerate}
   Then $L \cap L_0 \neq \emptyset$.
\end{cor}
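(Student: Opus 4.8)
I would first reduce the three cases to one, then use the Floer--Gysin sequence to move between $\Sigma$ and its complement, and finally exploit the fact that $L_0$ is the Lagrangian skeleton of the affine quadric to show that $L_0$ is Floer-theoretically essential; the last point is the hard part.

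\textbf{Reduction.} It suffices to prove: \emph{a monotone $L\subset\Sigma$ with $\dim L\ge 2$ and $QH(L)\ne 0$ satisfies $L\cap L_0\ne\emptyset$.} Case (3) is this statement. In case (1), $H_1(L;\mathbb{Z})=0$ gives $H^1(L;\mathbb{R})=0$, so $H_2(\Sigma;\mathbb{Z})\to H_2(\Sigma,L;\mathbb{Z})$ is onto; hence the relative symplectic-area and Maslov homomorphisms on $\pi_2(\Sigma,L)$ are pulled back from $\pi_2(\Sigma)$ and therefore proportional (as $\Sigma$ is monotone), so $L$ is monotone and $N_L$ is a multiple of $2C_\Sigma=2n$, whence $N_L-1\ge 2n-1>\dim L$; then $H^*(L;\mathbb{Z}_2)$ is concentrated in degrees $<N_L-1$ and Proposition~\ref{p:conditions-for-non-narrow}(2) gives $QH(L)\ne 0$. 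In case (2) the $2$-torsion of $H_1$ still forces $H^1(L;\mathbb{R})=0$ (so $L$ is monotone) and makes the cokernel of $H_2(\Sigma;\mathbb{Z})\to H_2(\Sigma,L;\mathbb{Z})$ $2$-torsion, so $N_L\in n\mathbb{Z}$ (thus $N_L\ge n\ge 3$); the two alternative hypotheses on $H^*(L;\mathbb{Z}_2)$ are exactly what, via Poincar\'e duality, forces $H^{n-1}$ and $H^n$ to be products of classes of degree $\le n-2<N_L-1$, so Proposition~\ref{p:conditions-for-non-narrow}(2) again applies. Thus it remains to contradict ``$QH(L)\ne 0$ and $L\cap L_0=\emptyset$''.

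\textbf{Transport via Floer--Gysin.} Realize $\Sigma$ as a degree-$2$ hyperplane section of $M=\mathbb{C}P^{n+1}$, so $c_1^{\mathcal{N}}=2H|_\Sigma$ and its mod-$2$ reduction $c$ vanishes; by Proposition~\ref{p:quant-rest-c_1} the Floer--Euler class is $e_F=r_{\scriptscriptstyle L}(c)=0$. Hence the long exact sequence of Theorem~\ref{T:exact-seq-1} breaks into short exact sequences $0\to QH^{k+2}(L)\to QH^{k+2}(\Gamma_L)\to QH^{k+1}(L)\to 0$, so $QH(\Gamma_L)\ne 0$ for the Lagrangian circle bundle $\Gamma_L\subset W=M\setminus\Sigma$; the same applied to $L_0$ (which has $N_{L_0}\ge 2n>2$, hence is wide) gives $QH(\Gamma_{L_0})\ne 0$. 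Since $L\cap L_0=\emptyset$ we also get $\Gamma_L\cap\Gamma_{L_0}=\emptyset$ and $HF(L,L_0)=0$. (It is worth recording that $W=\mathbb{C}P^{n+1}\setminus\Sigma$ is symplectomorphic to $T^*\mathbb{R}P^{n+1}$, arising as the base of the double cover $T^*S^{n+1}\to T^*\mathbb{R}P^{n+1}$ coming from the $2:1$ cover of $\mathbb{C}P^{n+1}$ branched over a quadric; this gives a concrete model of the ambient of $\Gamma_L$.)

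\textbf{Essentiality of $L_0$, and the main obstacle.} To finish I would show that $L_0$ split-generates the unique nonzero summand of the monotone Fukaya category of $\Sigma$. Over $\mathbb{Z}_2$ the quantum cohomology of the quadric is a \emph{local} ring (the quantum relation $h^{*(n+1)}=4q\,h^{*(n-1)}$ collapses to $h^{*(n+1)}=0$), so the only superpotential value carrying nonzero objects is $0$, and --- using $QH(L)\ne 0$ --- both $L$ and $L_0$ lie in this summand. To generate it by $L_0$, use the Stein picture of the quadric: with $\Sigma_\infty=\Sigma\cap\{z_{n+1}=0\}$, the complement $\Sigma\setminus\Sigma_\infty=\{w_0^2+\cdots+w_n^2=1\}$ is symplectomorphic to $T^*S^n$ with skeleton exactly $L_0$, so $\Sigma$ decomposes as $D^*S^n\cup(\text{conical piece over }\Sigma_\infty)$ and $\Sigma_\infty\subset\Sigma$ is again a symplectic hyperplane section; iterating this and feeding the $\Sigma_\infty\subset\Sigma$ picture into the Floer--Gysin machinery of Theorem~\ref{T:exact-seq-1} (together with its positive version), one sets up an induction on the dimension --- a Lefschetz-pencil/vanishing-cycle argument à la Seidel, powered by the neck-stretching of \S\ref{S:stretching} --- realizing every object with nonvanishing Floer cohomology as built from $L_0$. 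Split-generation then gives $HF(L,L_0)\ne 0$, contradicting $L\cap L_0=\emptyset$. I expect this generation step to be the main obstacle: holomorphic disks with boundary on $L$ can a priori enter the cotangent neighbourhood $D^*S^n$ of $L_0$ and even meet $L_0$ itself (the maximum principle only prevents them from \emph{leaving} $D^*S^n$ once inside), so one must either quote a known generation result for the quadric's Fukaya category or, self-containedly, filter the pearl complex of $L$ by geometric intersection number with $L_0$ and prove that the associated graded --- where disks may not touch $L_0$ --- has vanishing cohomology. A final technical wrinkle is that the inductive step uses the $\Sigma_\infty\subset\Sigma$ hyperplane-section picture, whose hypotheses fail once $\dim_{\mathbb{R}}\Sigma_\infty<4$, i.e. for $n=2$; there one argues separately using the explicit description $\Sigma=\mathbb{C}P^1\times\mathbb{C}P^1$ (or the diagonal in $\Sigma\times\overline{\Sigma}$ as in the proof of Theorem~\ref{t:QH-Sigma-2-priodic}).
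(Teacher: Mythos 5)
Your reduction to the core assertion (monotone, $QH(L)\ne 0$ forces $L\cap L_0\ne\emptyset$) and your computation $e_F=0$ from $c_1^{\mathcal{N}}=\mathcal{O}(2)|_\Sigma$ via Proposition~\ref{p:quant-rest-c_1} are both correct and match the paper. But the way you close the argument is not what the paper does, and as you candidly flag yourself, your route has a real hole.

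The missing idea is \emph{displaceability}, not generation. The paper invokes a result of~\cite{Bi:Nonintersections}: if $L\cap L_0=\emptyset$ then the Lagrangian circle bundle $\Gamma_L$ is Hamiltonianly displaceable in the Weinstein completion of $W=\mathbb{C}P^{n+1}\setminus\Sigma$, hence $QH(\Gamma_L)=HF(\Gamma_L)=0$. Feeding $QH(\Gamma_L)=0$ into the Floer--Gysin sequence forces $*\,e_F:QH^k(L)\to QH^{k+2}(L)$ to be an isomorphism for all $k$; since $e_F=0$, this isomorphism is the zero map, so $QH(L)=0$, contradicting~(3). That is the whole argument. You instead run the sequence the other way (from $e_F=0$ and $QH(L)\neq0$ you deduce $QH(\Gamma_L)\neq0$), observe $HF(L,L_0)=0$ trivially, and then try to get a contradiction by showing $L_0$ split-generates the relevant summand of the monotone Fukaya category of the quadric. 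You say yourself that this generation step is ``the main obstacle'' and is not proved in your write-up. It is not a small gap: it is a substantially deeper statement than anything the paper uses, and there is no reason to reach for it when the displaceability of $\Gamma_L$ already gives $QH(\Gamma_L)=0$ directly. You even note $\Gamma_L\cap\Gamma_{L_0}=\emptyset$ but don't exploit it; that disjointness is exactly the geometric fact that is upgraded in~\cite{Bi:Nonintersections} into displaceability.

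A smaller issue is your reduction of case~(2) to case~(3). You claim the two alternative hypotheses on $H^*(L;\mathbb{Z}_2)$ ``force $H^{n-1}$ and $H^n$ to be products of classes of degree $\le n-2$'' and then cite Proposition~\ref{p:conditions-for-non-narrow}(2). That implication is not justified: knowing $\dim H^1>1$, or $H^i\neq0$ for some $1<i<n-1$, does not say that $H^*(L;\mathbb{Z}_2)$ is generated in degrees $<N_L-1$. The paper argues the contrapositive: assume $QH(L)=0$; then (for $N_L=n$, $n\ge3$) the Oh/Biran spectral sequence forces $H^1(L;\mathbb{Z}_2)=\mathbb{Z}_2$ and $H^i(L;\mathbb{Z}_2)=0$ for $1<i<n-1$, which contradicts~(2). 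If $N_L$ is a larger multiple of $n$ one is back in case~(1). That is cleaner and actually works.
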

This corollary, under assumptions~(1) or~(2), has been proved before
in~\cite{Bi:Nonintersections} by somewhat different methods (see
Theorem~G there).

\begin{proof}
   Assume that $L \cap L_0 = \emptyset$. We will show that none of the
   conditions (1)-(3) in the statement of the corollary can be
   satisfied.

   Put $W = {\mathbb{C}}P^{n+1} \setminus \Sigma$. By the results
   of~\cite{Bi:Nonintersections} if $L \cap L_0 = \emptyset$ then
   $\Gamma_L \subset W$ is displaceable in the Weinstein completion of
   $W$, hence $HF(\Gamma_L)=0$. It follows that $e_F \in QH^2(L)$ is
   invertible.

   Next note that since $\Sigma$ is a quadric in ${\mathbb{C}}P^{n+1}$
   it normal bundle $\mathcal{N}$ in ${\mathbb{C}}P^{n+1}$ is actually
   $\mathcal{N} = \mathcal{O}_{{\mathbb{C}}P^{n+1}}(2)|_{\Sigma}$. It
   follows that the modulo-$2$ reduction of $c_1^{\mathcal{N}}$ is $0
   \in H^2(\Sigma; \mathbb{Z}_2)$. By
   Proposition~\ref{p:quant-rest-c_1} we have $e_F = 0$. But we have
   just showed that $e_F$ is invertible, hence $QH(L)=0$. This already
   rules out condition~(3).

   Assume now that~(1) holds. We will show that this implies that~(3)
   holds. Indeed, it is easy to see that $L$ is monotone and that the
   minimal Maslov number of $L$ is $N_L = 2n$. As $n \geq 2$ we have
   $N_L = 2n > n+1$ and standard arguments in Floer theory
   (see~\cite{Bi:Nonintersections}) show that $QH(L) \neq 0$. So~(3)
   holds.

   Assume now that~(2) is satisfied. We may assume that
   $H_1(L;\mathbb{Z}) \neq 0$ (otherwise we are in case~(1)).  It
   follows that $L$ is monotone and its minimal Maslov number is a
   multiple of $n$, say $N_L = kn$. If $k \geq 2$ we arrive at
   contradiction in a similar way as we did for case~(1). So assume
   that $k=1$, i.e. $N_L = n$. As $QH(L)=0$, standard arguments
   from~\cite{Bi:Nonintersections} (e.g. applying the spectral
   sequence described in that paper) show that if $n \geq 3$ then
   $H^1(L;\mathbb{Z}_2) = \mathbb{Z}_2$ and $H^i(L;\mathbb{Z}_2) = 0$
   for every $1< i < n-1$, contrary to the assumptions in~(2).
\end{proof}

\subsection{Examples revisited} \label{sb:exps-again} We review here in
retrospect the examples from the introduction after having developed
the theory in the paper.

\subsubsection{Lagrangians in ${\mathbb{C}}P^n$ with $2$-torsion
  $H_1(L;\mathbb{Z})$} \label{sbsb:lags-cpn-2-torsion}

It remains to explain here the computation of the Floer-Euler class.
Recall that $N_L = n+1$ and that there is a canonical isomorphism
$HF^*(L) \cong QH^*(L) \cong (H(L;\mathbb{Z}_2) \otimes \Lambda)^*$.
In particular $QH^2(L) \cong H^2(L;\mathbb{Z}_2) \cong \mathbb{Z}_2$.
We claim that under these identifications the Floer-Euler class $e_F$
equals the classical Euler class of the bundle $\Gamma_L \to L$ and
moreover that this must be the generator of
$H^2(L;\mathbb{Z}_2)=\mathbb{Z}_2$. To see that denote by $c \in
H^2({\mathbb{C}}P^n;\mathbb{Z}_2)$ the generator. Clearly $c$ is the
modulo-$2$ reduction of the first Chern class $c^{\mathcal{N}}_1$ of
the normal bundle of $\Sigma = {\mathbb{C}}P^n$ in $M =
{\mathbb{C}}P^{n+1}$. Therefore, by Proposition~\ref{p:quant-rest-c_1}
we have $e_F = r_{\scriptscriptstyle L}(c)$, where
$r_{\scriptscriptstyle L}$ is the quantum restriction map
$QH^*({\mathbb{C}}P^n) \to QH^*(L)$. But it is well known that $c \in
QH^2({\mathbb{C}}P^n)$ is invertible, hence $e_F =
r_{\scriptscriptstyle L}(c) = c*1$ cannot be $0$. ($c*1$ stands for module operation 
where $1$ is the unity of $QH^*(L)$.) It follows that
$e_F$ is the generator of $H^2(L;\mathbb{Z}_2)$.

\subsubsection{The Clifford torus revisited}
\label{sbsb:clif-revisited} We first compute the Floer-Euler class.
Clearly the classical Euler class of $\Gamma_L \to L$ is trivial since
$H^2(L;\mathbb{Z})$ has no torsion. We now use the recipe and notation
from~\S\ref{S:floer-euler-more}. By Section 6.2
of~\cite{Bi-Co:rigidity} (see also~\cite{Cho:Clifford,
  Cho-Oh:Floer-toric}) we have $S_L = [\mathbb{C}P^1] \in
H_2({\mathbb{C}}P^n;\mathbb{Z}_2)$. As $c^{\mathcal{N}}_1 =
\textnormal{PD}[{\mathbb{C}}P^{n-1}]$ we have $\langle
c^{\mathcal{N}}_1, S_L \rangle = 1$, hence by
Proposition~\ref{p:quantum-part-of-e_F}, $e_F = t$. Alternatively, we
could use Proposition~\ref{p:quant-rest-c_1} and the computations
in~\cite{Bi-Co:rigidity, Bi-Co:qrel-long} to calculate $e_F$.

It is interesting to examine what happens to the torus $\Gamma_L$ in
$M = {\mathbb{C}}P^{n+1}$ (rather than in $W = {\mathbb{C}}P^{n+1}
\setminus {\mathbb{C}}P^n$). A simple computation shows that
$\Gamma_L$ becomes now the standard Clifford torus of
${\mathbb{C}}P^{n+1}$.  By Theorem~\ref{T:chain-map-M} the Floer-Euler
class $e'_F$ is now $e'_F = e_F - t = 0$. (We use here the variable
$t$ instead of $q$ since $N_L = 2$ anyway.) It follows from
Theorem~\ref{T:chain-map-M} that the long exact sequence of $\Gamma_L$
in $M = {\mathbb{C}}P^{n+1}$ splits as:
\begin{equation*}
   \begin{CD}
      0 @>>> QH^{k}(L) @>{i}>> QH_{M}^{k}(\Gamma_L) @>{p}>>
      QH^{k-1}(L) @>>> 0.
   \end{CD}
\end{equation*}
It easily follows now that $\Gamma_L \subset {\mathbb{C}}P^{n+1}$ is
wide, i.e. $QH_M^*(\Gamma_L) \cong (H(\Gamma_L;\mathbb{Z}_2) \otimes
\Lambda)^*$.

\subsection{Wide and narrow Lagrangians} \label{sb:wide-narrow} Recall
from~\cite{Bi-Co:rigidity, Bi-Co:lagtop} that a Lagrangian submanifold
$L \subset \Sigma$ is called wide if there exists an isomorphism of
$\Lambda$-modules $QH(L) \cong H(L;\mathbb{Z}_2) \otimes \Lambda$. At
the other extremity we have narrow Lagrangians, i.e. Lagrangians $L$
with $QH(L)=0$. Of course, this notion is very sensitive to the choice
of the ground coefficients ring (in this case $\mathbb{Z}_2$), and
given a ring $K$ one could talk about $K$-wide and $K$-narrow
Lagrangians whenever $QH(L)$ can be defined over the ground ring $K$
(see~\cite{Bi-Co:lagtop} for more on that). Interestingly, when $K$ is
a field all known examples of Lagrangians are either wide or narrow.
This ``wide-narrow'' dichotomy can actually be proved for some
topological classes of Lagrangians such as Lagrangian tori (see e.g.
Theorem~1.2.2 in~\cite{Bi-Co:rigidity}). Below we will examine these
notions in view of the Floer-Gysin long exact sequence.

For simplicity assume that $N_L = 2$. By Theorem~\ref{T:chain-map-1},
if $L$ is narrow then so is $\Gamma_L$.

Assume now that $L$ is wide and that the $\mathbb{Z}_2$-Euler class $e
\in H^2(L;\mathbb{Z}_2)$ of $\Gamma_L \to L$ vanishes. By
Proposition~\ref{p:quantum-part-of-e_F} we have $e = r t$ for some $r
\in \mathbb{Z}_2$. By Theorem~\ref{T:chain-map-1}, if $r=1$ then
$\Gamma_L$ is narrow. Similarly, if $r=0$, then $\Gamma_L$ is wide.
It is interesting to note that if one considers $\Gamma_L$ as a
Lagrangian submanifold of $M$ then things get reversed. Indeed by
Theorem~\ref{T:chain-map-M} if $r=0$ then $\Gamma_L$ is wide in $M$,
while if $r=1$ then $\Gamma_L$ is narrow in $M$.  Note that examples
with $r=0$ are easy to construct: just take $\Sigma \subset M$ with
$c^{\mathcal{N}}_1 \in H^2(\Sigma;\mathbb{Z})$ which is divisible by
$2$ (e.g. $\Sigma=$ quadric in $M={\mathbb{C}}P^{n+1}$).

It would be interesting to study the same issues when $K$ is a general
field (other than $\mathbb{Z}_2$) or even $K = \mathbb{Z}$, assuming
that the Floer-Gysin sequence continues to hold in these cases (of
course, one should add here the assumptions that $L$ is oriented and
endowed with a spin structure. See~\S\ref{s:discussion}). Assume as
before that $K$ is a field, $L$ is $K$-wide and the $K$-Euler class $e
\in H^2(L;K)$ is $0$. Assume further that the class $D_L$ defined
in~\S\ref{S:floer-euler-more} is not $0$ (in particular for generic
$J$ there are holomorphic disks of Maslov index $2$ through a generic
point in $L$). One would expect that if $r \neq 0 \in K$ then
$\Gamma_L$ is narrow and if $r=0$ then $\Gamma_L$ is wide. Note that
by Proposition~\ref{p:quantum-part-of-e_F} one expects that whenever
$K$ has characteristic $0$ we should have $r \neq 0$. In other words,
if $K$ is a field of characteristic $0$ then $\Gamma_L$ should always
be $K$-narrow.

The situation should become more interesting over $K=\mathbb{Z}$. For
example, assume that $L$ is wide with $N_L = 2$ and with $e=0$. In
this case if $r \geq 2$ one would expect $QH(\Gamma_L)$ to have
torsion in the sense that $QH(\Gamma_L)\neq 0$ but $r \cdot a = 0$
for every $a \in QH(\Gamma_L)$.

\section{Discussion anf further questions} \label{s:discussion} Here we
briefly discuss possible extensions of the theory developed in the
paper and pose some questions.

All Floer and quantum cohomologies in this paper were defined over the
ground field $\mathbb{Z}_2$. It is well known that both theories can
be extended to work over any ground ring (e.g. $\mathbb{Z}$) under the
following conditions: the Lagrangians must be oriented and one should
fix a spin structure on them. These choices allow to orient the moduli
spaces of holomorphic disks and pearly trajectories in a coherent way.
Consequently the pearly differential can be defined over $\mathbb{Z}$.
See~\cite{FO3-book-1, FO3-book-2} for orientations of holomorphic
disks and Floer trajectories and~\cite{Bi-Co:lagtop} for pearly
trajectories and the pearl complex.

Considering our situation, assume that $L \subset \Sigma$ is oriented
and endowed with a spin structure $\mathfrak{s}_L$. The orientation of
$L$ induces a natural orientation on $\Gamma_L$ (we orient the fibers
of $\Gamma_L \to L$ with the orientation coming from the fibers of the
complex line bundle $\mathcal{N} \to \Sigma$). Moreover, the spin
structure $\mathfrak{s}_L$ induces a corresponding spin structure
$\mathfrak{s}_{\scriptscriptstyle \Gamma_L}$ on $\Gamma_L$.  With
these structures at hand the pearl complexes of $L$ and $\Gamma_L$ can
be defined over $\mathbb{Z}$. It seems very plausible that most of the
theory (i.e. the Floer-Gysin long exact sequence as well as the
analysis of the Floer-Euler class) continues to hold in this setting
too. In particular the Floer-Euler class $e_F$ will now be related to
the $\mathbb{Z}$ classical Euler class $e \in H^2(L;\mathbb{Z})$ and
moreover, the quantum contribution to $e_F$ whenever it exists will be
in $\mathbb{Z} t$ and might lead to more interesting computations and
stronger consequences. For example, when $W$ is subcritical (or more
generally, when $QH(\Gamma_L)=0$) one would expect that $e_F$ is
invertible over $\mathbb{Z}$ which is a much stronger restriction than
over $\mathbb{Z}_2$ (or even over a field).

In the same context, it would be interesting to study the relations
between the wide varieties of $L$ and $\Gamma_L$ via the techniques of
the paper once they are extended over $\mathbb{Z}$.
(See~\cite{Bi-Co:lagtop} for the definitions of wide varieties.) It
would also be interesting to study the invariants
from~\cite{Bi-Co:lagtop} for $L$ and $\Gamma_L$, e.g. the quadratic
forms and their discriminants, by our techniques.

Another interesting direction is to study the behavior of the
Floer-Gysin sequence with respect to other quantum structures, such as
the quantum module structure and the quantum inclusion. For example,
the quantum cohomology of $L$ is endowed with a structure of a
$QH(\Sigma)$-module and it seems likely that one can lift it to a
natural $QH(\Sigma)$-module structure on $QH(\Gamma_L)$. One would
then expect that the Floer-Gysin becomes compatible with these
$QH(\Sigma)$-module structures in the sense that the maps $i$, $p$
and the connecting homomorphism all become linear over $QH(\Sigma)$.
Note that this is obviously the case for the classical Gysin sequence.

Finally, we expect that much of the theory developed in this paper can
be generalized to Floer homologies of pairs of Lagrangians.  More
precisely, let $L_1, L_2 \subset \Sigma$ be two Lagrangian
submanifolds and let $\Gamma_{L_1}, \Gamma_{L_2} \subset W$ be the
corresponding Lagrangian circle bundles over them. It seems plausible
that similarly to Theorem~\ref{T:exact-seq-1} there should be a long
exact sequence relating $HF(L_1, L_2)$ to $HF(\Gamma_{L_1},
\Gamma_{L_2})$. Of course, one could try to extend this to questions
relating the $A_{\infty}$-algebras (or Fukaya categories) of
Lagrangians in $\Sigma$ and the corresponding ones in $W$.

\bibliography{bibliography}

\end{document}